
\documentclass[11pt,a4paper,reqno]{amsart}%
\usepackage{amssymb}
\usepackage{amsmath}
\usepackage{amsfonts}
\usepackage{amsthm}
\usepackage{color}
\usepackage[usenames,dvipsnames]{xcolor}
\usepackage{bbm}
\usepackage{graphicx}
\usepackage{rotating}
\usepackage{hyperref}
\usepackage{mathrsfs}
\usepackage[bbgreekl]{mathbbol}
\usepackage{cmll}
\usepackage[all, cmtip]{xy}%
\setcounter{MaxMatrixCols}{30}
\providecommand{\U}[1]{\protect\rule{.1in}{.1in}}
\newtheorem{theorem}{Theorem}[subsection]
\newtheorem{corollary}[theorem]{Corollary}
\newtheorem{lemma}[theorem]{Lemma}
\newtheorem{proposition}[theorem]{Proposition}
\newtheorem*{theorem*}{Theorem}

\theoremstyle{definition}
\newtheorem{definition}[theorem]{Definition}

\newenvironment{example}
  {\pushQED{\qed}\examplex}
  {\popQED\endexamplex}
  
\theoremstyle{remark}

\newenvironment{remark}
  {\pushQED{\qed}\remarkx}
  {\popQED\endremarkx}

\setlength{\headheight}{32pt} \setlength{\headsep}{29pt}
\setlength{\footskip}{28pt} \setlength{\textwidth}{444pt}
\setlength{\textheight}{636pt} \setlength{\marginparsep}{7pt}
\setlength{\marginparpush}{7pt} \setlength{\oddsidemargin}{4.5pt}
\setlength{\evensidemargin}{4.5pt} \setlength{\topmargin}{-15pt}
\setlength{\footnotesep}{8.4pt} \sloppy
\parindent=.75pc
\textheight 20.5cm
\thanks{}

\numberwithin{equation}{section}

\makeatletter
\def\@tocline#1#2#3#4#5#6#7{\relax
  \ifnum #1>\c@tocdepth 
  \else
    \par \addpenalty\@secpenalty\addvspace{#2}%
    \begingroup \hyphenpenalty\@M
    \@ifempty{#4}{%
      \@tempdima\csname r@tocindent\number#1\endcsname\relax
    }{%
      \@tempdima#4\relax
    }%
    \parindent\z@ \leftskip#3\relax \advance\leftskip\@tempdima\relax
    \rightskip\@pnumwidth plus4em \parfillskip-\@pnumwidth
    #5\leavevmode\hskip-\@tempdima
      \ifcase #1
       \or\or \hskip 1.5 em \or \hskip 2em \else \hskip 3em \fi%
      #6\nobreak\relax
    \hfill\hbox to\@pnumwidth{\@tocpagenum{#7}}\par
    \nobreak
    \endgroup
  \fi}
\makeatother


\begin{document}
\newcommand{\R}{{\mathbbm R}}
\newcommand{\C}{{\mathbbm C}} 
\newcommand{\T}{{\mathbbm T}}
\newcommand{\D}{{\mathbbm D}}
\renewcommand{\P}{\mathbb P}

\newcommand{\Aa}{{\mathcal A}}
\newcommand{\Ii}{{\mathbbm K}}
\newcommand{\Jj}{{\mathbbm J}}
\newcommand{\Nn}{{\mathcal N}}
\newcommand{\Ll}{{\mathcal L}}
\newcommand{\Tt}{{\mathcal T}}
\newcommand{\Gg}{{\mathcal G}}
\newcommand{\Dd}{{\mathcal D}}
\newcommand{\Cc}{{\mathcal C}}
\newcommand{\Oo}{{\mathcal O}}

\newcommand{\pr}{\operatorname{pr}}
\newcommand{\bla}{\langle \hspace{-2.7pt} \langle}
\newcommand{\bra}{\rangle\hspace{-2.7pt} \rangle}
\newcommand{\blq}{[ \! [}
\newcommand{\brq}{] \! ]}
 \newcommand{\into}{\mathbin{\vrule width1.5ex height.4pt\vrule height1.5ex}}

\title{The Local Structure of Generalized Contact Bundles}

\author{Jonas Schnitzer}
\address{DipMat, Universit\`a degli Studi di Salerno, via Giovanni Paolo II n${}^\circ$ 123, 84084 Fisciano (SA) Italy.}
\email{jschnitzer@unisa.it}

\author{Luca Vitagliano}
\address{DipMat, Universit\`a degli Studi di Salerno, via Giovanni Paolo II n${}^\circ$ 123, 84084 Fisciano (SA) Italy.}
\email{lvitagliano@unisa.it}

\begin{abstract}
Generalized contact bundles are odd dimensional analogues of generalized complex manifolds. They have been introduced recently and very little is known about them. In this paper we study their local structure. Specifically, we prove a local splitting theorem similar to those appearing in Poisson geometry. In particular, in a neighborhood of a regular point, a generalized contact bundle is either the product of a contact and a complex manifold or the product of a symplectic manifold and a manifold equipped with an integrable complex structure on the gauge algebroid of the trivial line bundle.
\end{abstract}
\maketitle

\tableofcontents

\section*{Introduction}

Generalized complex manifolds have been introduced by Hitchin in \cite{H2003} and further investigated by Gualtieri in \cite{G2011}, and the literature about them is now rather wide. Generalized complex manifolds are necessarily even dimensional and they encompass symplectic and complex manifolds as extreme cases. A natural question is what is the odd dimensional analogue of a generalized complex manifold. Several answers to this question appeared already in the literature but the works on generalized geometry in odd dimensions are still sporadic \cite{IW2005,V2008,PW2011,W2012, S2015,AG2015}. Recently, A.~Wade and the second author proposed a partially new definition of an odd dimensional analogue of a generalized complex manifold, called a \emph{generalized contact bundle} \cite{VW2016}. Generalized contact bundles are a slight generalization of Iglesias-Wade integrable generalized almost contact structures \cite{IW2005} to the realm of (generically non-trivial) line bundles, and encompass not necessarily coorientable contact manifolds as an extreme case. At the other extreme they encompass line bundles equipped with an integrable complex structure on their gauge algebroid. In turn, such line bundles are intrinsic models for so called \emph{normal almost contact manifolds} \cite{B2002}. In our opinion, generalized contact bundles have an advantage over previous proposals of a generalized contact geometry: they have a firm conceptual basis in the so called \emph{homogenization scheme} \cite{VW2017}, which is, in essence, a dictionary from contact and related geometries to symplectic and related geometries. In principle, applying the dictionary is straightforward: it is enough to replace functions on a manifold $M$ with sections of a line bundle $L \to M$, vector fields over $M$ with derivations of $L$, etc. In practice, applying the dictionary can be actually challenging, and may lead to interesting new features \cite{G2013, V2015a, V2015, LOTV2014, LTV2016, BT2016, T2017a, T2017, VW2016b, VW2017}.

In \cite{VW2016} the authors define generalized contact bundles, and study their structure equations, showing, in particular, that every generalized contact bundle is a Jacobi bundle
\cite{Kirillov1976, Lichn1978, Marle1991}. This puts odd dimensional generalized geometry in the framework of Jacobi geometry. In this paper we begin a systematic study of generalized contact bundles by studying their local structure. Our main results are two splitting theorems. In this introduction we provide for them rough statements to be better explained and made precise in the bulk of the paper.

\begin{theorem*}[\textbf{A}]
Let $M$ be a manifold equipped with a generalized contact bundle, and let $x_0 \in M$ be a point in an odd dimensional characteristic leaf of $M$. Then, locally around $x_0$, $M$ is isomorphic, up to a $B$-field transformation, to the product of a contact manifold and a homogeneous generalized complex manifold whose homogeneous Poisson structure vanishes at a point.
\end{theorem*}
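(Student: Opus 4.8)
The plan is to deduce Theorem~A from the homogenization scheme of \cite{VW2017} together with an appropriate splitting theorem for generalized complex manifolds. Recall that a generalized contact bundle on $L\to M$ homogenizes to a homogeneous generalized complex structure $\widetilde{\mathbb J}$ on $\widetilde M$, the complement of the zero section in the total space of the dual line bundle $L^{\ast}$; the manifold $\widetilde M$ carries a free fibrewise $\R^{\times}$-action with Euler vector field $Z$, and the Jacobi structure underlying the generalized contact bundle (cf.\ \cite{VW2016}) homogenizes to a homogeneous Poisson structure $\widetilde\Lambda$ on $\widetilde M$ whose symplectic foliation is the pullback of the characteristic foliation of $M$. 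Since $x_{0}$ lies on a $(2k{+}1)$-dimensional characteristic leaf $S$, the point $\widetilde x_{0}$ over $x_{0}$ lies on the $\R^{\times}$-invariant, $(2k{+}2)$-dimensional symplectic leaf $\widetilde S := L^{\ast}|_{S}\setminus 0$, and the restriction of the homogeneous symplectic form exhibits $\widetilde S$ as the symplectization of the contact manifold $S$; in particular $Z(\widetilde x_{0})\neq 0$ is tangent to $\widetilde S$.

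Next I would establish the $\R^{\times}$-equivariant version of the generalized complex splitting theorem: in an invariant neighbourhood of $\widetilde x_{0}$, after an equivariant $B$-field transform by a $d$-closed, $\R^{\times}$-invariant $2$-form of the correct homogeneity weight, $(\widetilde M,\widetilde{\mathbb J})$ becomes isomorphic, as a homogeneous generalized complex manifold with diagonal $\R^{\times}$-action, to a product $(\widetilde S',\widetilde\omega)\times(N,\mathbb J_{N})$, where $\widetilde S'$ is an invariant neighbourhood of $\widetilde x_{0}$ in $\widetilde S$ and $(N,\mathbb J_{N})$ is a homogeneous generalized complex manifold whose homogeneous Poisson structure vanishes at the image of $\widetilde x_{0}$. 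Following the classical pattern of Abouzaid--Boyarchenko and Bailey, this rests on two ingredients: an equivariant Weinstein splitting of the homogeneous Poisson manifold $(\widetilde M,\widetilde\Lambda)$ --- essentially the Dazord--Lichnerowicz--Marle splitting of the associated Jacobi bundle --- writing a neighbourhood of $\widetilde x_{0}$ as a Poisson product of $\widetilde S'$ with a transversal $N$ whose bivector vanishes at the point, the Euler field decomposing accordingly as $Z=Z_{\widetilde S'}+Z_{N}$; and a Moser-type deformation argument that produces the closed invariant $2$-form $B$ for which $e^{B}\cdot\widetilde{\mathbb J}$ respects the product, the required homotopy operators being supplied by a homogeneous Poincar\'e lemma on the cone $\widetilde M$.

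It is worth noting that the splitting can also be produced directly over $M$, without homogenizing, by transcribing Bailey's argument with $TM$, $T^{\ast}M$ and the de Rham complex replaced by the gauge algebroid $DL$, its dual $J^{1}L$, and the der-de Rham complex with values in $L$; the latter complex is in fact acyclic --- contraction and Lie derivative along the canonical Euler section $\mathbbm{1}\in\Gamma(DL)$ obey $\mathcal L_{\mathbbm 1}=\mathrm{id}$ --- so the Poincar\'e-lemma steps become automatic and the $B$-field appears directly as an Atiyah-closed $L$-valued $2$-form. Either way, one then dehomogenizes, respectively descends: the homogeneous symplectic factor $(\widetilde S',\widetilde\omega)$ is tautologically the symplectization of a contact manifold $C\ni x_{0}$ --- an open piece of the leaf $S$ --- and, since the product decomposition, the diagonal action and the $B$-field are equivariant, they pass to the $\R^{\times}$-quotient and yield the asserted local isomorphism, up to a $B$-field transform, between $M$ near $x_{0}$ and the product of the contact manifold $C$ with the homogeneous generalized complex manifold $(N,\mathbb J_{N})$.

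The step I expect to be the main obstacle is the Moser-type normal-form argument that produces the splitting $B$-field, carried out inside the gauge-algebroid calculus (or, equivalently, equivariantly on the cone): one must check that every closedness and integrability condition transcribes faithfully from the classical generalized complex setting to the der-de Rham calculus of $DL$, and keep meticulous track of the homogeneity weights --- in particular of the decomposition $Z=Z_{\widetilde S'}+Z_{N}$ --- so that $\widetilde\omega$ and $\mathbb J_{N}$ really are homogeneous for their respective factors, which is what makes the contact structure on $C$ and the homogeneous generalized complex structure on $N$ well defined after descent.
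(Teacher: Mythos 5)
Your overall strategy (homogenize, split equivariantly \`a la Abouzaid--Boyarchenko/Bailey, then descend) is plausible in outline and shares its first step with the paper --- both use the Dazord--Lichnerowicz--Marle splitting of the underlying Jacobi structure --- and your side remarks are correct (the leaf over $x_0$ in $\widetilde M=L^\ast\smallsetminus 0$ is the symplectization of the contact leaf, and the der-de Rham complex $(\Omega^\bullet_L,d_D)$ is indeed acyclic, with $\mathcal L_{\mathbb 1}=\mathrm{id}$). But the proof has a genuine gap at exactly the point you flag: the ``$\mathbbm R^\times$-equivariant version of the generalized complex splitting theorem'' (equivalently, its transcription into the $DL$/$J^1L$ calculus) is the entire content of the theorem, and you do not establish it. The non-equivariant results of Abouzaid--Boyarchenko and Bailey cannot simply be quoted: their Moser-type deformations and normal-form constructions must be redone so that every vector field, path of $2$-forms and homotopy operator is homogeneous of the correct weight with respect to the Euler action (and complete enough to integrate on an invariant neighbourhood), and it is precisely this weight bookkeeping, including the decomposition $Z=Z_{\widetilde S'}+Z_N$, that you defer. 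Acyclicity of the der-complex removes only the Poincar\'e-lemma steps, not the construction of the deforming flow. The paper avoids a Moser argument altogether: after the DLM reduction it takes the explicit $1$-jet $\psi$, sets $\mathcal E=J^\sharp\psi$ (an Euler-like derivation), uses that $(\mathcal E,\chi)$ with $\chi=\mathrm{i}\psi-\varphi^\dag\psi$ is a section of $\mathfrak L$, flows by the induced Courant--Jacobi automorphisms $(e^{C_t}\circ\mathbbm D\Phi_t,\Phi_t)$, and checks by direct computation that the time-dependent Atiyah $2$-form $C_{-\log s}$ converges as $s\to 0$ to $B+\mathrm{i}\,\omega_{\mathit{can}}$ with $B$ real and closed; the splitting then falls out of $\mathfrak L=(K_0^!\mathfrak L)^{B_0}$ and the flat-product formalism. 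This convergence computation is the analytic heart of the argument and has no counterpart in your sketch.

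A second, smaller gap is the descent. The $\mathbbm R^\times$-action on your product $\widetilde S'\times N$ is diagonal (your $Z=Z_{\widetilde S'}+Z_N$ with $Z_N\not\equiv 0$), and the quotient of a product by a diagonal action is not naively a product of quotients; what one actually obtains downstairs is the \emph{flat product} of line bundles and Dirac--Jacobi structures of Subsection \ref{subsec:prod_DJ}, with the transversal factor retaining $Z_N$ as the homogeneity datum of its homogeneous generalized complex structure (cf.\ Propositions \ref{prop:L_J_z} and \ref{prop:hom_gen_compl}). Asserting that everything ``passes to the quotient'' skips the identification that this formalism is designed to make precise, so even granting the equivariant splitting upstairs, the final sentence of your argument needs a careful proof rather than an appeal to equivariance.
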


\begin{theorem*}[\textbf{B}]
Let $M$ be a manifold equipped with a generalized contact bundle, and let $x_0 \in M$ be a point in an even dimensional characteristic leaf of $M$. Then, locally around $x_0$, $M$ is isomorphic, up to a $B$-field transformation, to the product of a symplectic manifold and a manifold with a generalized contact bundle whose Jacobi structure vanishes at a point.
\end{theorem*}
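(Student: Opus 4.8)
The plan is to deduce the statement from the local splitting theorem for generalized complex manifolds, due to Abouzaid and Boyarchenko, by means of the homogenization scheme. Recall that a generalized contact bundle on $L \to M$ homogenizes to a \emph{homogeneous} generalized complex structure on the total space $\widetilde M$ of the principal $\R^\times$-bundle associated with $L$; homogeneity is with respect to this $\R^\times$-action, whose infinitesimal generator is the Euler vector field $\mathcal E$. Under the correspondence, the Jacobi bi-derivation of the generalized contact bundle is the de-homogenization of the homogeneous Poisson bivector $\widetilde\pi$ attached to the generalized complex structure, and the symplectic leaves of $\widetilde\pi$ cover the characteristic leaves of the Jacobi structure. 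Fixing $\widetilde{x}_0 \in \widetilde M$ over $x_0$, the hypothesis that the characteristic leaf through $x_0$ is even-dimensional translates into the following: the symplectic leaf $S$ of $\widetilde\pi$ through $\widetilde{x}_0$ has the same (even) dimension $2k$, and $\mathcal E$ is transverse to it, i.e.\ $\mathcal E_{\widetilde{x}_0} \notin T_{\widetilde{x}_0} S$. This is precisely the dichotomy that separates Theorem B from Theorem A, where instead $\mathcal E_{\widetilde{x}_0} \in T_{\widetilde{x}_0} S$.

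Next, apply the generalized complex splitting at $\widetilde{x}_0$: after a $B$-field transformation there is a local isomorphism identifying $\widetilde M$, near $\widetilde{x}_0$, with a product $(\R^{2k}, \omega_{\mathrm{std}}) \times N$, where $\omega_{\mathrm{std}}$ is the standard symplectic form near the origin and $N$ carries a generalized complex structure near a point $p_0$ whose underlying Poisson bivector vanishes at $p_0$. The crucial point is that this splitting can be performed $\R^\times$-equivariantly. Since $\mathcal E$ is transverse to the symplectic foliation near $\widetilde{x}_0$, its flow lines are transverse to the leaves; one may therefore choose the transversal giving rise to $N$ to be saturated by these flow lines, hence $\mathcal E$-invariant. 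Then $\mathcal E$ is tangent to $\{0\} \times N$, the whole $\R^\times$-action is concentrated in the second factor, and it acts there locally freely. Carrying out the splitting construction --- which builds Darboux-type coordinates on the leaf by flowing along Hamiltonian vector fields of leafwise functions, and produces the correcting $B$-field from a primitive of a closed two-form --- with $\R^\times$-invariant choices throughout, one obtains a decomposition whose first factor is honestly symplectic, whose second factor inherits a homogeneous generalized complex structure, and whose $B$-field is homogeneous. Homogeneity of the original structure, equivalently of $\widetilde\pi$, is what makes it possible to perform the invariant choices consistently.

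Finally, de-homogenize. The factor $(\R^{2k}, \omega_{\mathrm{std}})$ carries the trivial $\R^\times$-action and is left unchanged, remaining an honest symplectic manifold. The factor $N$, with its locally free $\R^\times$-action, is the homogenization of a line bundle $L' \to N'$ equipped with a generalized contact bundle --- the de-homogenization of the generalized complex structure on $N$. Since the Poisson bivector on $N$ vanishes at $p_0$, the Jacobi bi-derivation of $L' \to N'$ vanishes at the image of $p_0$. Unwinding the isomorphisms, $M$ is locally isomorphic, up to the de-homogenized $B$-field transformation, to the product of a symplectic manifold and a manifold carrying a generalized contact bundle whose Jacobi structure vanishes at a point, as claimed.

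The main obstacle is the equivariance in the middle step: one has to verify that every choice entering the Abouzaid--Boyarchenko argument --- the transversal, the leafwise functions whose Hamiltonian flows straighten the symplectic coordinates, and the primitive producing the $B$-field --- can be taken $\R^\times$-invariant, and that homogeneity is preserved at every stage; this is where the bulk of the technical work lies. A conceivable alternative, avoiding homogenization, is to replay the Abouzaid--Boyarchenko induction directly inside the omni-Lie algebroid $\Dd L \oplus \Jj^1 L$, working with derivations of $L$ and the Jacobi bracket in place of vector fields and the Poisson bracket; the combinatorics of the induction are unchanged, but one must then keep track of the curvature of $L$, and in the even-leaf case each inductive step splits off a symplectic plane while leaving the complement a generalized contact bundle.
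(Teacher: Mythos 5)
Your route via homogenization is genuinely different from the paper's, and it is plausible in outline, but as written it has a real gap precisely where you locate ``the bulk of the technical work'': the $\mathbb{R}^\times$-equivariant Abouzaid--Boyarchenko splitting is asserted, not proved, and it is not a routine verification. The group $\mathbb{R}^\times$ is non-compact, so invariant choices cannot be manufactured by averaging; every ingredient of the induction must instead be taken of the correct homogeneity degree -- the leafwise Hamiltonians must be homogeneous of degree one (i.e.\ sections of $L$, so that their Hamiltonian vector fields are invariant, since $\widetilde\pi$ has degree $-1$), and the correcting $2$-form must satisfy $\mathcal L_{\mathcal E}\widetilde B=\widetilde B$, not mere invariance, because only degree-one forms descend to Atiyah $2$-forms on $L$. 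Arranging this coherently through the whole construction is essentially equivalent to redoing the argument downstairs in terms of derivations of $L$ and the omni-Lie algebroid, i.e.\ the ``conceivable alternative'' you mention and leave untouched. Two further points are glossed over: that the equivariant identification actually descends to a product in the sense the theorem requires (a line bundle over $N'\times\mathbb{R}^{2k}$ with regular projections onto line bundles over both factors, i.e.\ a flat product of Dirac--Jacobi structures), and that the de-homogenized structure on the transversal factor is again of \emph{generalized contact type} with Jacobi structure vanishing at the point -- in the paper this is not automatic but is the content of Proposition \ref{prop:lcs_transversal}, proved via the backward image $I_N^!\mathfrak L$ and the clean intersection condition.

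For comparison, the paper deliberately avoids Abouzaid--Boyarchenko and never leaves $M$: it first invokes the Dazord--Lichnerowicz--Marle splitting (Theorem \ref{theor:Dazord_lcs}) to put the underlying Jacobi structure in the product normal form $J^\times$ on $\mathbbm R_{N\times V}$, then builds an Euler-like derivation $\mathcal E=J^\sharp\psi$ from an explicit jet $\psi$, takes the eigensection $(\mathcal E,\chi)=\mathrm{i}(0,\psi)+\mathbbm K(0,\psi)\in\Gamma(\mathfrak L)$, and flows by the associated infinitesimal Courant--Jacobi automorphism, which preserves $\mathfrak L$ by involutivity. The degenerate limit $s\to 0$ of the rescaled flow gives $K_0=I_N\circ P_N$ together with a limiting closed Atiyah $2$-form $B_0=B+\mathrm{i}\,\xi_{\mathit{can}}$ with $B$ real, whence $\mathfrak L=(\mathfrak L_N\times^!\mathfrak L_{\mathit{can}}^{\mathit{ev}})^B$. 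This Bursztyn--Lima--Meinrenken-style argument produces the real $B$-field, the flat product, and the identification of the transversal factor all at once; if you pursue your homogenization route, you will in effect have to reproduce these steps upstairs in degree-one-homogeneous form, so the equivariance step should be treated as the theorem's actual content rather than a final verification.
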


We also explicitly discuss the local structure of a generalized contact bundle in a neighborhood of a regular point, proving the following two local normal form theorems.

\begin{theorem*}[\textbf{C}]
Let $M$ be a $(2n + 2d + 1)$-dimensional manifold equipped with a generalized contact bundle, and let $x_0 \in M$ be a point in a $(2d +1)$-dimensional characteristic leaf of $M$. If $x_0$ is a regular point, then, locally around $x_0$, $M$ is isomorphic, up to a $B$-field transformation, to the product of the standard $(2d +1)$-dimensional contact manifold $(\mathbbm R^{2d+1}, \theta_{\mathit{can}})$ and the standard complex space $\mathbbm C^n$.
\end{theorem*}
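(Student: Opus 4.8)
The plan is to deduce Theorem~C from Theorem~A, using the regularity assumption to trivialize the transverse factor and then invoking the classical local normal forms of Darboux and Newlander--Nirenberg. Since $x_0$ lies in a $(2d+1)$-dimensional, hence odd dimensional, characteristic leaf, Theorem~A applies and yields, after shrinking $M$ to a neighborhood of $x_0$, an isomorphism up to a $B$-field transformation $M \cong C \times \mathcal{N}$, where $C$ is a $(2d+1)$-dimensional contact manifold, $\mathcal{N}$ is a $2n$-dimensional homogeneous generalized complex manifold whose homogeneous Poisson bivector $\pi$ vanishes at the point $z_0 \in \mathcal{N}$ corresponding to $x_0$, and, under the splitting, the characteristic foliation of $M$ is the product of $C$ (which is a single leaf) with the characteristic symplectic foliation of $(\mathcal{N}, \pi)$. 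Write $y_0 \in C$ for the other component of the image of $x_0$.

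Now I would bring in the regularity of $x_0$. By definition this means that the characteristic distribution of $M$ has locally constant rank near $x_0$; transported through the product decomposition, the characteristic distribution at $(y,z)$ has dimension $(2d+1) + \operatorname{rank}(\pi_z)$, so $\operatorname{rank}(\pi)$ is locally constant near $z_0$. Since $\pi_{z_0} = 0$, this forces $\pi \equiv 0$ on a neighborhood of $z_0$. After shrinking $\mathcal{N}$, the homogeneous generalized complex structure on $\mathcal{N}$ is therefore of complex type, i.e.\ it reduces to a homogeneous integrable complex structure; by the Newlander--Nirenberg theorem (in its homogeneous guise) a neighborhood of $z_0$ in $\mathcal{N}$ is then isomorphic to the standard complex space $\mathbbm C^n$. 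On the contact side, the contact Darboux theorem --- in the line bundle formulation, which covers also the non-coorientable case --- gives a local isomorphism near $y_0$ of $C$ with the standard $(2d+1)$-dimensional contact manifold $(\mathbbm R^{2d+1}, \theta_{\mathit{can}})$. Composing the product of these two local isomorphisms with the one supplied by Theorem~A produces the asserted local isomorphism, up to a $B$-field transformation, of $M$ onto $(\mathbbm R^{2d+1}, \theta_{\mathit{can}}) \times \mathbbm C^n$.

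The step I expect to be the main obstacle is the regularity argument: one must check that the notion of regular point invoked in the hypothesis is exactly the locally-constant-rank condition for the characteristic distribution, and that under the Theorem~A splitting this condition descends to the transverse homogeneous Poisson bivector $\pi$, so that the pointwise vanishing $\pi_{z_0} = 0$ propagates to a whole neighborhood. Everything else is bookkeeping: carrying along the $B$-field, and quoting Darboux's and Newlander--Nirenberg's theorems in the homogeneous / gauge-algebroid versions already available from the earlier sections of the paper.
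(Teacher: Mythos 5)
There is a genuine gap, and it sits exactly where you predicted: in how regularity is transported through the splitting. Two problems. First, your description of the characteristic foliation of the split model is not correct: for the product Jacobi structure of Theorem~\ref{theor:Dazord_contact}, $\Lambda^\times = \Lambda_{\mathit{can}} + \pi_N - E_{\mathit{can}} \wedge Z_N$, the Hamiltonian direction of $du$ contributes $-Z_N$ (mod the contact directions), so the leaf through $(y,z)$ has dimension $(2d+1) + \dim\left(\operatorname{im} \pi_z^\sharp + \langle Z_z \rangle\right)$, not $(2d+1) + \operatorname{rank}(\pi_z)$; the homogeneity vector field $Z$ of the transverse factor is in general \emph{not} tangent to the symplectic leaves of $\pi$ (e.g.\ $\pi = 0$, $Z \neq 0$ is a homogeneous Poisson structure), so the foliation is not the product of foliations. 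Second, and more seriously, even granting $\pi \equiv 0$ near $z_0$ your argument never addresses the homogeneity data $\mathbbm Z = (Z,\zeta)$, which enters the transverse factor: what Theorem~\ref{theor:splitting_contact} puts on $N$ is the Dirac--Jacobi structure $\mathfrak L_N = I_N^! \mathfrak L$ of homogeneous generalized complex type, locally of the form $\mathfrak L_{(\mathbbm J, \mathbbm Z)}$ of (\ref{eq:hgc_DJ}), and this depends on $Z$ and $\zeta$, not only on $\mathbbm J$. If $Z$ does not vanish identically near $z_0$, the conclusion is simply false: $\mathfrak L_{(\mathbbm J,\mathbbm Z)} \cap \overline{\mathfrak L_{(\mathbbm J,\mathbbm Z)}}$ is spanned by $(\mathbb 1 - Z, \,\cdot\,)$, whose $D$-component has symbol $-Z \neq 0$, while $\mathfrak L_{\mathbbm C^n} \cap \overline{\mathfrak L_{\mathbbm C^n}}$ is spanned by $(\mathbb 1, 0)$, and no Courant--Jacobi automorphism ($B$-field transformations do not change the $D$-component, and line bundle automorphisms preserve $\mathbb 1$ and symbols) can match these. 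So ``$\pi \equiv 0$ hence complex type hence Newlander--Nirenberg'' does not yet prove the theorem.

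The paper's proof runs the implication in the opposite order, which is what makes it work: regularity is used (via the proof of Proposition~\ref{prop:contact_transversal}) to show that the spanning section of $\mathfrak L_N \cap \overline{\mathfrak L_N}$ is of the form $(\mathbb 1, \zeta)$ at \emph{every} nearby point, i.e.\ $Z = 0$ identically in the local trivialization; then $\pi = 0$ is automatic from the homogeneity identity $\mathcal L_Z \pi = -\pi$, $A$ is an honest complex structure on the $2n$-dimensional transversal (so only the classical Newlander--Nirenberg theorem is needed, not a homogeneous or gauge-algebroid version --- that is reserved for Theorem~D), and the residual data $\zeta$ and $\sigma = \iota_A d\zeta$ are killed simultaneously by the explicit $B$-field $e^{d_D \zeta}$, a step your sketch also skips. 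Your argument is repairable: with the corrected leaf-dimension count above, regularity forces $\operatorname{im}\pi^\sharp + \langle Z \rangle = 0$ near $z_0$, i.e.\ both $\pi \equiv 0$ and $Z \equiv 0$, after which you would still need the $e^{d_D\zeta}$ step before quoting Newlander--Nirenberg. As written, however, the proposal establishes only $\pi \equiv 0$, which is not sufficient.
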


\begin{theorem*}[\textbf{D}]
Let $M$ be a $(2n + 2d + 1)$-dimensional manifold equipped with a generalized contact bundle, and let $x_0 \in M$ be a point in a $2d$-dimensional characteristic leaf of $M$. If $x_0$ is a regular point, then, locally around $x_0$, $M$ is isomorphic, up to a $B$-field transformation, to the product of the standard $2d$-dimensional symplectic space $(\mathbbm R^{2d}, \Omega_{\mathit{can}})$ and the cylinder $\mathbbm R \times \mathbbm C^n$ equipped with the canonical complex structure on the gauge algebroid of the trivial line bundle $(\mathbbm R \times \mathbbm C^n) \times \mathbbm R \to \mathbbm R \times \mathbbm C^n$.
\end{theorem*}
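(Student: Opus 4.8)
The strategy is to combine the splitting Theorem~\textbf{B} with a Newlander--Nirenberg-type normal form for integrable complex structures on the gauge algebroid of a line bundle. Since $x_0$ lies in a $2d$-dimensional, hence even-dimensional, characteristic leaf, Theorem~\textbf{B} applies: up to a $B$-field transformation, a neighbourhood of $x_0$ in $M$ is isomorphic to the product of a $2d$-dimensional symplectic manifold $(S,\Omega)$ and a $(2n+1)$-dimensional generalized contact bundle $(N,L_N)$ whose Jacobi structure vanishes at the point $y_0$ corresponding to $x_0$. By the Darboux theorem I may shrink $S$ and assume $(S,\Omega)=(\R^{2d},\Omega_{\mathit{can}})$, so the whole statement reduces to identifying $(N,L_N)$ near $y_0$ with the cylinder model, that is, to the case $d=0$. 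Here regularity enters: the characteristic leaf of $M$ through $x_0$ is the product of $S$ with the characteristic leaf of $N$ through $y_0$, which is therefore $0$-dimensional; since $x_0$ is regular, nearby characteristic leaves of $M$ are also $2d$-dimensional, so the characteristic foliation of $N$ near $y_0$ is by points, i.e.\ the Jacobi structure of $N$ vanishes identically on a neighbourhood of $y_0$.

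The next step is to unravel the structure equations of a generalized contact bundle (as set up in the body of the paper, following \cite{VW2016}) in the case of vanishing Jacobi structure. Writing the generalized contact structure as an endomorphism of $DL_N\oplus J^1L_N$ in block form with respect to this splitting, vanishing of the Jacobi off-diagonal block makes the endomorphism triangular; the condition that it square to $-\operatorname{id}$ then forces the diagonal to be a complex structure $\phi$ on the gauge algebroid $DL_N$ together with (minus) its adjoint on $J^1L_N$, while the remaining off-diagonal block is an $L_N$-valued $2$-form $\omega$ of type $(1,1)$ with respect to $\phi$. Integrability of the generalized contact structure, specialized to this case, says that $\phi$ is integrable --- its $(+i)$-eigenbundle in $DL_N\otimes\C$ is involutive --- and that $\omega$ is closed for the de Rham differential of $DL_N$ with values in $L_N$; a local $B$-field transformation by $-\omega$ then removes the off-diagonal block. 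I am thus reduced to putting an integrable complex structure $\mathcal J$ on the gauge algebroid $DL_N$ of a line bundle over a $(2n+1)$-dimensional manifold into normal form near $y_0$ --- this is precisely the ``normal almost contact'' extreme case of the Introduction.

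For this last problem I would argue as follows. The canonical nowhere-vanishing section $\mathbbm{1}=\operatorname{id}_{L_N}$ of $DL_N$ spans the kernel of the symbol map $\sigma\colon DL_N\to TN$; put $\Delta:=-\mathcal J(\mathbbm{1})$ and $X:=\sigma(\Delta)$. Because the Jacobi structure of $N$ vanishes, $X$ is nowhere zero near $y_0$, so I may choose coordinates $(t,x^1,\dots,x^{2n})$ centred at $y_0$ with $X=\partial_t$ and, after a change of local trivialization of $L_N$, arrange $L_N\cong N\times\R$ with $\Delta=\partial_t$, i.e.\ $\mathcal J(\mathbbm{1})=-\partial_t$. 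The $(+i)$-eigenbundle $F=\ker(\mathcal J-i\operatorname{id})\subset DL_N\otimes\C$ is involutive of rank $n+1$ and contains $\mathbbm{1}+i\partial_t$; pushing forward by the Lie algebroid morphism $\sigma$, the complex distribution $\sigma(F)\subset TN\otimes\C$ is involutive, contains $\langle\partial_t\rangle_\C$, and (again by involutivity of $F$) is invariant under the flow of $\partial_t$, hence descends to the $(+i)$-eigenbundle of an almost complex structure on the local leaf space $\R^{2n}$ of $\partial_t$. By the Newlander--Nirenberg theorem this complex structure is integrable, so $N\cong\R\times\C^n$; one then checks that the induced identification carries $(L_N,\mathcal J)$ onto the trivial line bundle over $\R\times\C^n$ equipped with its canonical complex structure on the gauge algebroid. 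Taking the product with $(\R^{2d},\Omega_{\mathit{can}})$ and composing all the $B$-field transformations accumulated along the way yields Theorem~\textbf{D}.

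The main obstacle is the second stage. On one side one has to justify the block-diagonalization carefully: it must be checked that vanishing of the Jacobi part, together with integrability, genuinely forces the residual $2$-form part $\omega$ to be closed, so that it is locally a $B$-field and can be removed. On the other side, the closing identification is not literally the classical Newlander--Nirenberg theorem but its counterpart on the gauge algebroid: one must descend the transverse complex structure along the flow of $X$ and, at the same time, verify that the a priori nontrivial line bundle $L_N$ gets trivialized compatibly with $\mathcal J$, so that the full package --- and not merely the underlying almost complex structure --- matches the canonical model.
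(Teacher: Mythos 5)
Your first two stages match the paper: the reduction to a transversal via the splitting theorem (Theorem \ref{theor:splitting_lcs}) and the observation that regularity forces the induced Jacobi structure on the transversal to vanish identically near the point, so that the induced structure has the triangular form (\ref{eq:I_N}). The genuine gap is in your second stage, and it sits exactly at the point you yourself flag as needing a check: integrability does \emph{not} force the residual Atiyah $2$-form $\omega_N$ to be $d_D$-closed. First, a small algebraic slip: $\mathbbm K_N^2=-\mathrm{id}$ gives $(\omega_N)_\flat\circ\varphi_N=\varphi_N^\dag\circ(\omega_N)_\flat$, i.e. $\omega_N(\varphi_N\Delta,\varphi_N\nabla)=-\omega_N(\Delta,\nabla)$, so $\omega_N$ is of type $(2,0)+(0,2)$ with respect to $\varphi_N$, not $(1,1)$ as you assert. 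Second, and decisively, involutivity of the $+\mathrm{i}$-eigenbundle only yields $\partial_D\gamma=0$ for the $(2,0)$-component $\gamma$ of $\omega_N$; the component $\overline\partial_D\gamma$ of $d_D\omega_N$ is unconstrained. Hence $\omega_N$ is in general not closed, $e^{-\omega_N}$ is not a Courant--Jacobi automorphism, and your ``$B$-field transformation by $-\omega_N$'' is not available (and even for closed $\omega_N$ the form to subtract would not be $\omega_N$ itself, as one must only cancel the $(2,0)$-contribution to the eigenbundle while keeping the correcting form real). The correct removal, as in the paper, first solves $\gamma=\partial_D\rho$ with $\rho\in\Omega^{(1,0)}_{L_N}$ and then transforms by the \emph{real} closed Atiyah $2$-form $B=-2\operatorname{Re}\left(\gamma+\overline\partial_D\rho\right)$; producing $\rho$ requires the local vanishing of the Dolbeault--Atiyah cohomology (Theorem \ref{theor:DAC} and Remark \ref{rem:Dolbeault}), which is precisely the extra analytic ingredient the Introduction announces as necessary for Theorem (D) and which is absent from your argument. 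This mirrors the generalized complex situation, where a generalized complex structure with vanishing Poisson part is a $B$-field transform of a complex structure only after an application of the local $\partial$-Poincar\'e lemma.

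Two smaller remarks on your last stage. The reason you give for the symbol $X=\sigma(-\mathcal J\,\mathbb 1)$ being nowhere zero is not the right one: it has nothing to do with the vanishing of the Jacobi structure, but follows from $\mathcal J^2=-\mathrm{id}$, since $\mathcal J\,\mathbb 1$ cannot lie in $\ker\sigma=\langle\mathbb 1\rangle$. As for the normal form of the integrable complex structure on $DL_N$, your route (rectify $X$, trivialize $L_N$, push the $+\mathrm{i}$-eigenbundle to the local leaf space of $X$ and invoke Newlander--Nirenberg) is plausible but leaves precisely the descent and compatibility checks you acknowledge; the paper instead proves Theorem \ref{theor:ANN} by homogenization to the frame bundle $L^\ast\smallsetminus 0$, where the classical Newlander--Nirenberg theorem applies directly and the descent is automatic because all the data are homogeneous. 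That difference is harmless; the missing Dolbeault--Atiyah vanishing is the real gap.
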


We stress that the word ``product'' in the statements of Theorems (A)-(D) does not exactly refer to the (cartesian) product of manifolds, but rather to a certain technical notion of \emph{product of line bundles} over possibly different base manifolds that we explain in Subsection \ref{subsec:prod_DJ} below. Here we only mention that, if $L_1 \to M_1$ and $L_2 \to M_2$ are line bundles, then a \emph{product} of them in our sense is a line bundle over $M_1 \times M_2$ equipped with additional structure (see diagram (\ref{diag:product}). We also mention here that 

the proof of Theorem (D) requires proving that certain Dolbeault-like cohomologies associated with a complex structure on the gauge algebroid of a line bundle are locally trivial.

For our proofs, we use an adaptation of the methods of Bursztyn-Lima-Meinrenken \cite{BLM2016}. This suggests that the Splitting Theorems (A) and (B) are a manifestation of more general \emph{normal form theorems} around appropriate transversals to a characteristic leaf (see \cite{BLM2016} for more details). This is indeed the case, as showed by the first author in \cite{S20XX}. Even more, it is showed in \cite{S20XX} that there are normal forms of Jacobi (and, more generally, Dirac-Jacobi) structures around appropriate transversals, extending the splitting theorems of Dazord-Lichnerowicz-Marle \cite{DLM1991}, and paralleling similar normal forms in Poisson (and Dirac) Geometry (see \cite{BLM2016}, see also \cite{FM2017}).  In this paper, we prefer to stick on a less general formulation. Our choice is mainly dictated by space reasons: we think the paper contains already enough material, including several generalities with a certain interest, besides our main results. We actually hope that this paper could also serve as one possible reference for (local aspects of) Generalized Contact Geometry in future works.
%

The paper is organized as follows. In Section \ref{Sec:preliminaries} we collect the necessary preliminaries on gauge algebroids and Jacobi structures. In Section \ref{Sec:gen_cont_DJ} we recall the notions of generalized contact bundles \cite{VW2016} and complex Dirac-Jacobi structures \cite{V2015}. In this section we also discuss in details symmetries of the omni-Lie algebroid, which plays for generalized contact and Dirac-Jacobi bundles a similar role as the generalized tangent bundle plays for generalized complex and Dirac manifolds. Finally we discuss \emph{homogeneous generalized complex structures}, and a suitable notion of \emph{product of Dirac-Jacobi bundles}, which appears to be unavoidable in a precise formulation of our splitting theorems. In Section \ref{Sec:transv} we describe in details the structures induced on the characteristic leaves of a generalized contact structure, and on their transversals. Section \ref{Sec:splitting} contains our main results: the splitting theorems around a point in a contact and around a point in a locally conformal symplectic leaf. In the last Section \ref{Sec:regular} we prove, as corollaries, local normal form theorems around a regular point. Finally, in Appendix \ref{appendix}, we discuss a very special class of generalized contact structures: complex structures on the gauge algebroid of a line bundle. We prove a local normal form theorem analogous to the Newlander-Nirenberg theorem, and the local vanishing of an associated Dolbeault-like cohomology. Both are consequences of their standard even dimensional counterparts. 

We assume the reader is familiar with the fundamentals of Lie algebroids, Dirac manifolds and generalized complex structures.

\section{Preliminaries}\label{Sec:preliminaries}
\subsection{The gauge algebroid}\label{subsec:gauge_alg}
A derivation of a vector bundle $E \to M$ is an $\mathbbm R$-linear operator
\[
\Delta : \Gamma (E) \to \Gamma (E)
\]
satisfying the following \emph{Leibniz rule}
\[
\Delta (f \varepsilon) = X(f) \varepsilon + f \Delta \varepsilon, \quad f \in C^\infty (M), \quad \varepsilon \in \Gamma (E),
\]
for a, necessarily unique, vector field $X \in \mathfrak X (M)$, called the \emph{symbol} of $\Delta$ and denoted by $\sigma (\Delta)$. Derivations are sections of a Lie algebroid $DE \rightarrow M$, called the \emph{gauge algebroid} of $E$, whose anchor is the symbol, and whose bracket is the commutator of derivations \cite{M2005}. The fiber $D_x E$ of $DE$ over a point $x \in M$ consists of $\mathbbm R$-linear maps $\Delta : \Gamma (E) \to E_x$ satisfying the Leibniz rule $\Delta (f \varepsilon) = v(f) \varepsilon_x + f(x) \Delta \varepsilon$ for some tangent vector $v \in T_x M$: the \emph{symbol} of $\Delta$.

The correspondence $E \mapsto DE$ is functorial, in the following sense. Let $F \to N$ and $E \to M$ be two vector bundles, and let $\Phi : F \to E$ be a \emph{regular} vector bundle map, i.e.~a bundle map, covering a smooth map $\phi : N \to M$, which is an isomorphism on fibers. Then $\Phi$ induces a (generically non-regular) vector bundle map $D \Phi : D F \to D E$ via formula
\[
D \Phi (\Delta) \varepsilon = (\Phi \circ \Delta) ( \Phi^\ast \varepsilon),
\]
for all $\Delta \in D F$, and $\varepsilon \in \Gamma (E)$. Here $\Phi^\ast \varepsilon$ is the \emph{pull-back} of $\varepsilon$ along $\Phi$, i.e.~it is the section of $F$ given by $(\Phi^\ast \varepsilon)_y = \Phi|_{F_y}^{-1} (\varepsilon_{\phi(y)})$, $y \in N$. The vector bundle map $D \Phi$ will be sometimes denoted by $\Phi_\ast$ if there is no risk of confusion. Correspondence $\Phi \mapsto D \Phi$ preserves identity and compositions.

Derivations of a vector bundle $E$ can be seen as \emph{linear vector fields} on $E$, i.e.~vector fields generating a flow by vector bundle automorphisms. Namely, for every derivation $\Delta$ of $E$, there exists a unique flow $\{ \Phi_t \}$ by vector bundle automorphisms $\Phi_t : E \to E$ such that 
\[
\Delta \varepsilon =  \frac{d}{dt} |_{t = 0} \Phi_t^\ast \varepsilon
\]
for all $\varepsilon \in \Gamma (E)$.

The gauge algebroid acts tautologically on the vector bundle $E$. Accordingly, there is a \emph{de Rham complex} of $DE$ with coefficients in $E$, denoted $(\Omega^\bullet_E, d_D)$. Cochains in $(\Omega^\bullet_E, d_D)$ will be referred to as \emph{Atiyah forms}. They are vector bundle maps $\wedge^\bullet DE \to E$. The differential $d_D$ is given by the usual formula. Atiyah forms can be pulled-back along regular vector bundle maps. Namely, let $F \to N$ and $E \to M$ be vector bundle, and let $\Phi : F \to E$ be a regular vector bundle map covering a smooth map $\phi : M \to N$. For $\omega \in \Omega^k_E$, we define $\Phi^\ast \omega \in \Omega^k_F$ via
\[
(\Phi^\ast \omega)_{y} (\Delta_1, \ldots, \Delta_k) = \Phi|_{F_y}^{-1} \circ \omega_{\phi (y)} (\Phi_\ast \Delta_1, \ldots, \Phi_\ast \Delta_k)
\]
for all $y \in N$, and $\Delta_1, \ldots, \Delta_k \in D_{y} F$. One can also take the Lie derivative $\mathcal L_\Delta := [\iota_\Delta, d_D]$ of Atiyah forms along a derivation $\Delta$, and all these operators satisfy the usual 
\emph{Cartan calculus identities}. Additionally, there is a distinguished derivation, namely the identical one: $\mathbb 1 : \Gamma (E) \to \Gamma (E)$, $\varepsilon \mapsto \varepsilon$, and the contraction $\iota_{\mathbb 1}$ of Atiyah forms with $\mathbb 1$ is a contracting homotopy for $(\Omega^\bullet_E, d_D)$. In particular, $(\Omega^\bullet_E, d_D)$ is acyclic.

 In the case of  a line bundle $L \to M$, every first order differential operator $\Gamma (L) \to \Gamma (L)$ is a derivation. Consequently, there are vector bundle isomorphisms $DL \cong \mathsf{Hom} (J^1 L, L)$, and $J^1 L \cong \mathsf{Hom} (D L, L)$, and a non-degenerate pairing $\langle -, - \rangle : J^1 L \otimes DL  \to L$, where $J^1 L \to M$ is the first jet bundle of $L$. In this case, the identical derivation $\mathbb 1$ spans the kernel of the symbol and there is a short exact sequence:
 \begin{equation}\label{ses:Spencer}
 0 \longrightarrow \mathbbm R_M \longrightarrow DL \overset{\sigma}{\longrightarrow} TM \longrightarrow 0,
 \end{equation}
 where $\mathbbm R_M = M \times \mathbbm R$ is the trivial line bundle over $M$. Dually, there is a short exact sequence
  \begin{equation}\label{ses:Spencer*}
 0 \longleftarrow  L \longleftarrow J^1 L \longleftarrow T^\ast M \otimes L \longleftarrow 0.
 \end{equation}
The embedding $T^\ast M \otimes L \hookrightarrow J^1 L$ extends to an embedding
\[
\Omega^\bullet (M, L) \hookrightarrow \Omega^\bullet_L,
\] 
of $L$-valued forms on $M$ into Atiyah forms on $L$, consisting in composing with the symbol $\sigma : DL \to TM$. We will often interpret $\Omega^\bullet (M, L)$ as a subspace in $\Omega^\bullet_L$ without further comments. Notice that $\Omega^1_L = \Gamma (J^1 L)$, and the first differential $d_D : \Gamma (L) \to \Omega^1_L$ agrees with the first jet prolongation $j^1 : \Gamma (L) \to \Gamma (J^1 L)$.

 \begin{remark}[Atiyah forms on the trivial line bundle]\label{rem:trivial}
 When $L = \mathbbm R_M$ is the trivial line bundle, then sections of $L$ are just functions on $M$, both sequences (\ref{ses:Spencer}) and (\ref{ses:Spencer*}) splits canonically via the standard flat connection in $\mathbbm R_M$, and we have 
 \[
 \begin{aligned}
 D \mathbbm R_M & = TM \oplus \mathbbm R_M, \\
 J^1 \mathbbm R_M & = T^\ast M \oplus \mathbbm R_M.
 \end{aligned}
 \] 
 In this case, a generic derivation is of the form $X + f$ where $X$ is a vector field and $f$ is a function. Similarly a generic section of $J^1 \mathbbm R_M$ is of the form $\eta + g \cdot j^1 1$, where $\eta$ is a $1$-form,  $g$ is a function, and $j^1 1$ is the first jet prolongation of the constant function $1 \in C^\infty (M)$. In the following, we will denote $\mathfrak j = j^1 1$. Then we have
 \[
 j^1 f = df + f \cdot \mathfrak j, \quad f \in C^\infty (M).
 \]
 More generally, any Atiyah form $\omega \in \Omega^\bullet_{\mathbbm R_M}$ can be uniquely written as
 \begin{equation}\label{eq:decomposition}
 \omega = \omega_0 + \omega_1 \wedge \mathfrak j,
 \end{equation}
 with $\omega_0, \omega_1 \in \Omega^\bullet (M)$, and we used the symbol to give Atiyah forms the structure of a graded $\Omega^\bullet (M)$-module. The correspondence $\omega \mapsto (\omega_0, \omega_1)$ establishes an isomorphism of graded $\Omega^\bullet (M)$-modules:
\[
\Omega^\bullet_{\mathbbm R_M} \cong \Omega^\bullet (M) \oplus \Omega^{\bullet -1} (M).
\]
In terms of the decomposition (\ref{eq:decomposition}) the natural operations on Atiyah forms read as follows:
\[
\begin{aligned}
d_D \omega & = d \omega_0 + \left(d\omega_1 + (-)^{|\omega_0|} \omega_0\right) \wedge \mathfrak j \\
\iota_{X + f} \omega & = \iota_X \omega_0 + (-)^{|\omega_1|} f \omega_1 + \iota_X \omega_1 \wedge \mathfrak j \\
\mathcal L_{X + f} \omega& = \mathcal L_X \omega_0 + f \omega_0 + \omega_1 \wedge df + \left(\mathcal L_X \omega_1 + f \omega_1\right) \wedge \mathfrak j.
\end{aligned}
\]
for all $X + f \in \Gamma (D \mathbbm R_M)$, where the bars ``$|-|$'' denote the degree.
 \end{remark}

\subsection{Jacobi bundles and their characteristic foliations}

Jacobi manifolds were introduced by Kirillov \cite{Kirillov1976}, and independently, Lichnerowicz \cite{Lichn1978}, as generalizations of Poisson manifolds. Here we adopt to Jacobi manifolds the 
slightly more intrinsic approach via Jacobi bundles \cite{Marle1991} (see also \cite{T2017}). Jacobi bundles encompass (not necessarily coorientable) contact manifolds as non-degenerate instances.

Let $L \to M$ be a line bundle. A \emph{Jacobi structure} on $L$ is a Lie bracket
\[
\{-,-\} : \Gamma (L)\times \Gamma (L) \to \Gamma (L)
\]
which is also a first order bi-differential operator or, equivalently, a bi-derivation. The bracket $\{-,-\}$ is
also called the \emph{Jacobi bracket}. A \emph{Jacobi bundle} is a line bundle equipped with a
Jacobi structure. A Jacobi bracket $\{-,-\}$ can be regarded as a $2$-form
\[
J : \wedge^2 J^1 L \to L
\] 
satisfying an additional integrability condition, and in the following we will often take this point of view.

\begin{example}\label{ex:Jac_nd}
Every contact manifold is canonically equipped with a Jacobi bundle containing a full information on the 
contact structure. Indeed, let $(M, H)$ be a contact manifold, i.e.~$H \subset TM$ is a maximally non-integrable hyperplane distribution, and consider the normal line bundle $L := TM / H$. The distribution $H$ 
can be equivalently encoded in a line bundle valued $1$-form $\theta \in \Omega^1 (M, L)$: the canonical 
projection $\theta : TM \to L$. In its turn $\theta$ can be seen as an Atiyah $1$-form on $L$. One can prove 
that $\omega := d_D \theta \in \Omega^2_L$ is a non-degenerate (and closed) Atiyah $2$-form (see, e.g.,
 \cite{V2015}). Here, the non-degeneracy means that the induced vector bundle map
 \[
 \omega_\flat : DL \to J^1 L
 \]
 is invertible. Its inverse $\omega_\flat^{-1}$ is the sharp map $J^\sharp  : J^1 L \to DL$ of a (unique, non-degenerate) Jacobi structure $J := \omega^{-1} : \wedge^2 J^1 L \to L$. Conversely, every non-degenerate Jacobi structure on a line bundle $L \to M$, determines 
 a contact structure $H \subset TM$ on $M$, with $TM/H = L$. For some more details, see the discussion
at the beginning of Subsection \ref{Sec:Transv_contact}.
\end{example}

\begin{example}
Every locally conformal symplectic (lcs) manifold is canonically equipped with a Jacobi bundle containing a full information on the lcs structure. We adopt a slightly more intrinsic approach to lcs manifolds. Namely, in this paper, a lcs structure on a line bundle $L \to M$ is a pair $(\Omega, \nabla)$, where $\nabla$ is a flat connection in $L$, and $\Omega$ is an $L$-valued $2$-form on $M$, which is 1) non-degenerate and 2) closed with respect to the connection differential $d^\nabla : \Omega^\bullet (M, L) \to \Omega^{\bullet +1 } (M, L)$. When $L = \mathbbm R_M$ is the trivial line bundle we recover the usual definition. So let $L \to M$ be a line bundle equipped with an lcs symplectic structure $(\nabla, \Omega)$. The bracket
\[
\{ -, -\} : \Gamma (L) \times \Gamma (L) \to \Gamma (L), \quad (\lambda, \mu) \mapsto \Omega^{-1}(d^\nabla \lambda, d^\nabla \mu)
\]
is a Jacobi bracket. If we interpret it as a $2$-form $J : \wedge^2 J^1 L \to L$, it is easy to see that the rank of $J$ is $\dim M$. Conversely, every Jacobi structure $J$ on a line bundle $L \to M$ such that $\operatorname{rank} J = \dim M$ determines an lcs structure on $L$. For some more details, see the discussion at the beginning of Subsection \ref{Sec:Transv_lcs}.
\end{example} 

Similarly as a Poisson manifold, a manifold $M$ equipped with a Jacobi bundle $(L, J)$ possesses a canonical (generically singular) foliation, called the characteristic foliation and defined as follows. Consider the sharp map associated to $J$, $J^\sharp : J^1 L \to DL$. Composing with the symbol we get a map $\sigma J^\sharp : J^1 L \to TM$, whose image is an involutive distribution on $M$. The integral foliation $\mathcal F$ of $\operatorname{im} \sigma J^\sharp$ is the \emph{characteristic foliation} of the Jacobi bundle $(L, J)$, and its leaves are the \emph{characteristic leaves}. Odd dimensional leaves of $\mathcal F$ are naturally contact manifolds, while even dimensional leaves are lcs manifolds. For more details about properties of the characteristic leaves in Jacobi geometry see, e.g., \cite{T2017} (see also Section \ref{Sec:transv}).

When $L = \mathbbm R_{M} \to M$ is the trivial line bundle, a Jacobi bracket
  $\{-,-\}$ on $L$ is equivalent to a \emph{Jacobi pair}, i.e.~a pair $(\Lambda, E)$,
 consisting of a bivector $\Lambda \in \mathfrak X^2 (M)$ and a vector field 
 $E \in \mathfrak X (M)$ such that
\[
[\Lambda, \Lambda]^{SN} = 2 E \wedge \Lambda, \quad \text{and} \quad [E, \Lambda]^{SN} = 0,
\]
where $[-,-]^{SN}$ is the \emph{Schouten-Nijenhuis bracket} of multivectors. The equivalence is provided by the following formula:
\[
\{ f, g \} = \Lambda (f, g) + E(f) g - f E(g), \quad f,g \in C^\infty (M).
\]

\begin{example}\label{ex:J_can}
On $\mathbbm R^{2d +1}$, with coordinates $(x^1, \ldots, x^d,  p_1, \ldots, p_d, u)$, there is a canonical Jacobi pair $(\Lambda_{\mathit{can}}, E_{\mathit{can}})$ given by
\[
\Lambda_{\mathit{can}} := \frac{\partial}{\partial p_i} \wedge \left( \frac{\partial}{\partial x^i} + p_i \frac{\partial}{\partial u} \right), \quad \text{and} \quad E_{\mathit{can}} = \frac{\partial}{\partial u}.
\]
We denote by $J_{\mathit{can}}$ the Jacobi structure corresponding to the Jacobi pair $(\Lambda_{\mathit{can}}, E_{\mathit{can}})$.
\end{example}

\section{Generalized contact and Dirac-Jacobi geometry}\label{Sec:gen_cont_DJ}

\subsection{The omni-Lie algebroid and its symmetries}

The natural arena for \emph{generalized geometry in odd dimensions} is the 
\emph{omni-Lie algebroid} $\mathbbm D L$ of a line bundle $L \to M$ \cite{CL2010}.
Recall that $\mathbbm D L = DL \oplus J^1 L$, where $DL \to M$ is the gauge algebroid. The omni-Lie algebroid possesses the following structures:
\begin{itemize}
\item[$\triangleright$] a natural projection
\begin{equation}\label{eq:pr}
\pr_{D} : \D L \longrightarrow D L;
\end{equation}
\item[$\triangleright$] a non-degenerate, symmetric, split signature $L$-valued $2$-form 
\[
\bla -, - \bra : \D L \otimes \D L \to L
\]
 given by:
\[
\bla (\Delta , \psi), (\square, \chi) \bra := \langle \chi ,\Delta \rangle + \langle \psi, \square \rangle;
\]
\item[$\triangleright$] a (non-skew symmetric, Dorfman-like) bracket 
\[
\blq -,- \brq : \Gamma (\D L) \times \Gamma (\D L) \to \Gamma (\D L)
\]
 given by:
\begin{equation}\label{eq:Dorf_brack}
\blq (\Delta, \psi), (\square, \chi) \brq := \left([\Delta, \square], \Ll_\Delta \chi - \iota_\square d_{D} \psi \right)
\end{equation}
\end{itemize}
for all $\Delta, \square \in D L$, and all $\psi, \chi \in \Gamma (J^1 L)$. These structures satisfy certain identities that we do not report here (for more details see, e.g., \cite{V2015}). Most of them are just the obvious analogues of those holding for the \emph{standard Courant algebroid}: the \emph{generalized tangent bundle} $\mathbbm T M = TM \oplus T^\ast M$. Accordingly, the rest of this subsection is just an adaptation from similar features of $\mathbbm T M$.

We now describe symmetries of the omni-Lie algebroid $\mathbbm D L$. First of all, for a vector bundle $E \to M$, we denote by $\mathrm{Aut} (E)$ the group of its automorphisms.
\begin{definition}
A \emph{Courant-Jacobi automorphism} of $\mathbbm D L$ is a pair $(\mathbb \Phi, \Phi)$
consisting of
\begin{enumerate}
\item an automorphism $\mathbb \Phi$ of the vector bundle $\mathbbm D L$, and
\item an automorphism $\Phi$ of $L$,
\end{enumerate}
such that $\mathbb \Phi$ and $\Phi$ cover the same diffeomorphism $\phi : M \to M$, and, additionally,
\[
\begin{aligned}
D\Phi \circ p_D  & = p_D \circ \mathbb \Phi  \\
 \Phi^\ast \bla \alpha, \beta \bra & = \bla \mathbb \Phi^\ast \alpha, \mathbb \Phi^\ast \beta \bra  \\
 \mathbb \Phi^\ast \blq \alpha, \beta \brq & = \blq \mathbb \Phi^\ast \alpha, \mathbb \Phi^\ast \beta \brq
\end{aligned}
\]
for all $\alpha, \beta \in \Gamma (\mathbbm D L)$. The group of Courant-Jacobi automorphisms is denoted by $\mathrm{Aut}_{CJ} (\mathbbm D L)$.
\end{definition}

\begin{example}
Let $B$ be a closed Atiyah $2$-form, i.e.~$B \in \Omega^2_L$ and $d_D B = 0$ (in particular, $B$ is exact). Denote by $e^B : \mathbbm DL \to \mathbbm DL$ the vector bundle automorphism defined by 
\[
e^B (\Delta, \psi) := (\Delta, \psi + \iota_\Delta B), \quad (\Delta, \psi) \in \mathbbm D L.
\]
Using the decomposition $\mathbbm D L = DL \oplus J^1 L$ we can write $e^B$ in matrix form:
\begin{equation}
e^B = \left(
\begin{array}{cc}
\mathrm{id} & 0 \\
B_\flat & \mathrm{id}
\end{array}
\right).
\end{equation}
An easy computation shows that $(e^B, \mathrm{id})$ is a Courant-Jacobi automorphism. We will refer to it as a \emph{$B$-field transformation}, adopting the same terminology as for Courant automorphisms of the generalized tangent bundle. Clearly $e^0 = \mathrm{id}$, $e^{B_1} \circ e^{B_2} = e^{B_1 + B_2}$, and $(e^B)^{-1} = e^{-B}$, for all closed Atiyah $2$-forms $B, B_1, B_2$, showing that $B$-field transformations form a(n abelian) subgroup of $\mathrm{Aut}_{CJ} (\mathbbm D L)$ isomorphic to $Z^2_L$: the group of $2$-cocycles in $(\Omega^\bullet_L, d_L)$.
\end{example}

\begin{example}
Let $\Phi : L \to L$ be a vector bundle automorphism covering a diffeomorphism $\phi : M \to M$. Define a vector bundle automorphism $\mathbbm D \Phi : \mathbbm D L \to \mathbbm D L$ via
\[
\mathbbm D \Phi (\Delta, \psi) := (\Phi_\ast (\Delta), (\Phi^{-1})^\ast \psi), \quad (\Delta, \psi) \in \mathbbm D L.
\]
It is easy to see that $(\mathbbm D \Phi, \Phi)$ is a Courant-Jacobi automorphism. Additionally $\mathbbm D \mathrm{id} = \mathrm{id}$, $\mathbbm D \Phi_1 \circ \mathbbm D \Phi_2 = \mathbbm D (\Phi_1 \circ \Phi_2)$, and $(\mathbbm D \Phi)^{-1} = \mathbbm D (\Phi^{-1})$, for all $\Phi, \Phi_1, \Phi_2 \in \mathrm{Aut} (L)$, showing that Courant-Jacobi automorphisms of the form $\mathbbm D \Phi$ form a subgroup of $\mathrm{Aut}_{CJ} \mathbbm D L$ isomorphic to $\mathrm{Aut} (L)$. Finally, let $B \in Z^2_L$ and $\Phi \in \mathrm{Aut}(L)$. Then
\[
   e^{ B} \circ \mathbbm D \Phi = \mathbbm D \Phi\circ  e^{\Phi^\ast B} .
\]
In particular we see that $B$-field transformations and automorphisms of $L$ generate a subgroup in $\mathrm{Aut}_{CJ} (\mathbbm D L)$ isomorphic to
\[
Z^2_L \rtimes \mathrm{Aut} (L)
\]
where $\mathrm{Aut} (L)$ acts on $Z^2_L$ (from the right) via pull-backs.
\end{example}

Actually, exactly as for the generalized tangent bundle, $B$-field transformations and automorphisms of $L$ generate the full group of Courant-Jacobi automorphisms, according to the following proposition which we report here for completeness.

\begin{proposition}\label{prop:Aut_CJ}
Let $L \to M$ be a line bundle. Then
\[
\mathrm{Aut}_{CJ}(\mathbbm D L) \cong Z^2_L \rtimes \mathrm{Aut} (L).
\]
\end{proposition}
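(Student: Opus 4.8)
The plan is to show that any Courant--Jacobi automorphism $(\mathbb\Phi,\Phi)$ can be written uniquely as $\mathbb\Phi = e^B \circ \mathbbm D\Psi$ for a suitable $\Psi \in \mathrm{Aut}(L)$ and $B \in Z^2_L$; together with the two previous examples (which exhibit $Z^2_L \rtimes \mathrm{Aut}(L)$ as a subgroup with the stated semidirect-product structure), this yields the isomorphism. First I would use the compatibility condition $D\Phi \circ p_D = p_D \circ \mathbb\Phi$ to understand the ``block-triangular'' shape of $\mathbb\Phi$ with respect to the decomposition $\mathbbm D L = DL \oplus J^1 L$. Composing $\mathbb\Phi$ with $\mathbbm D(\Phi^{-1})$ (legitimate, since $\mathbbm D\Phi$ is itself a Courant--Jacobi automorphism by the second example), I reduce to the case where the underlying automorphism of $L$ is the identity and $\mathbb\Phi$ covers $\mathrm{id}_M$; call this reduced automorphism $\mathbb\Psi$. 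The relation $D\Phi \circ p_D = p_D \circ \mathbb\Phi$ then forces $p_D \circ \mathbb\Psi = p_D$, i.e.\ $\mathbb\Psi$ acts as the identity on the $DL$-component modulo $J^1 L$; so in matrix form $\mathbb\Psi = \left(\begin{smallmatrix} \mathrm{id} & 0 \\ \beta & \gamma \end{smallmatrix}\right)$ for some bundle maps $\beta : DL \to J^1 L$ and $\gamma : J^1 L \to J^1 L$ (here I also use that $J^1 L$ is the kernel of $p_D$, which $\mathbb\Psi$ must preserve because it commutes with $p_D$ and covers $\mathrm{id}$).

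Next I would exploit the invariance of the pairing $\bla-,-\bra$. Writing out $\bla \mathbb\Psi\alpha, \mathbb\Psi\beta\bra = \bla\alpha,\beta\bra$ on pairs of the form $(0,\psi),(0,\chi)$ and $(\Delta,0),(0,\chi)$ and $(\Delta,0),(\square,0)$, and using that $J^1 L$ is isotropic while the pairing between $DL$ and $J^1 L$ is the tautological one, I expect to deduce: $\gamma$ is the identity on $J^1 L$ (from the mixed terms), and $\beta : DL \to J^1 L$ is such that the induced map $\langle \beta(-),(-)\rangle : DL \otimes DL \to L$ is skew-symmetric, i.e.\ $\beta = B_\flat$ for a genuine Atiyah $2$-form $B \in \Omega^2_L$. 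At this stage $\mathbb\Psi = e^B$ as vector bundle automorphisms. Finally, the bracket-compatibility $\mathbb\Psi^*\blq-,-\brq = \blq\mathbb\Psi^*-,\mathbb\Psi^*-\brq$ must be used to pin down $B$: plugging $e^B$ into the Dorfman bracket formula \eqref{eq:Dorf_brack} and comparing $J^1 L$-components, the discrepancy is governed by $d_D B$ (the usual computation: $e^B$ is a Courant automorphism iff $B$ is closed), so $d_D B = 0$, i.e.\ $B \in Z^2_L$. Undoing the reduction gives $\mathbb\Phi = e^B \circ \mathbbm D\Phi$, and uniqueness of $(B,\Phi)$ follows since $e^{B_1}\mathbbm D\Phi_1 = e^{B_2}\mathbbm D\Phi_2$ forces $\Phi_1 = \Phi_2$ (look at the action on $L$, or on the $DL$-component) and then $B_1 = B_2$.

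The main obstacle I anticipate is the bookkeeping in the second and third steps: correctly extracting from the pairing-invariance that the ``$J^1 L \to J^1 L$ block'' is exactly the identity (not merely an isometry of some kind) and that the off-diagonal block is the flat of an honest — rather than merely a general — Atiyah $2$-form, and then carefully running the Dorfman-bracket computation for $e^B$ to see that closedness of $B$ is both necessary and sufficient. None of these are conceptually hard — each is the line-bundle analogue of a standard fact for $\mathbbm T M$ — but the Cartan-calculus identities for Atiyah forms (available from Subsection~\ref{subsec:gauge_alg}) have to be invoked with care, especially the interplay between $\iota_\Delta$, $\mathcal L_\Delta$ and $d_D$ and the role of the tautological pairing $\langle-,-\rangle : J^1 L \otimes DL \to L$. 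I would organize the write-up so that the purely linear-algebraic reduction (giving the shape of $\mathbb\Psi$) is cleanly separated from the differential computation (giving $d_D B = 0$), mirroring the classical proof for Courant automorphisms of the generalized tangent bundle.
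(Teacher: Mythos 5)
Your proposal is correct and is precisely the argument the paper has in mind: the paper omits the proof, stating only that it follows the standard Dirac-geometric argument (Gualtieri's Proposition 2.5 for $\mathbbm T M$), and your reduction via $\mathbbm D\Phi^{-1}$, the pairing computation giving $\gamma=\mathrm{id}$ and $\beta=B_\flat$ for an Atiyah $2$-form $B$, and the Dorfman-bracket computation forcing $d_D B=0$ are exactly the line-bundle adaptation of that argument.
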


\begin{proof}
The proof follows exactly the same lines as in standard Dirac geometry (see e.g.~\cite[Proposition 2.5]{G2011}) and we omit it.
\end{proof}

We now pass to infinitesimal symmetries of $\mathbbm D L$. First of all, for a vector bundle $E \to M$, denote by $\mathfrak{aut} (E)$ the Lie algebra of its infinitesimal automorphisms. As already remarked, $\mathfrak{aut} (E)$ is canonically isomorphic to the Lie algebra $\Gamma (DE)$ of derivations $E$.

\begin{definition}
An \emph{infinitesimal Courant-Jacobi automorphism} of $\mathbbm D L$ is a pair $(\mathbb \Delta, \Delta)$
consisting of
\begin{enumerate}
\item a derivation $\mathbb \Delta$ of $\mathbbm D L$, and
\item a derivation $\Delta$ of $L$,
\end{enumerate}
such that $\mathbb \Delta$ and $\Delta$ have the same symbol, and, additionally
\[
\begin{aligned}
{}[ \Delta, p_D \alpha]  & = p_D (\mathbb \Delta \alpha) \\
 \Delta \bla \alpha, \beta \bra & = \bla \mathbb \Delta \alpha, \beta \bra + \bla \alpha, \mathbb \Delta \beta \bra \\
 \mathbb \Delta \blq \alpha, \beta \brq & = \blq \mathbb \Delta \alpha, \beta \brq + \blq \alpha, \mathbb \Delta \beta \brq 
\end{aligned}
\]
for all $\alpha, \beta \in \Gamma (\mathbbm D L)$. Equivalently $(\mathbb \Delta, \Delta)$ generates a flow by Courant-Jacobi automorphisms of $\mathbbm D L$. The Lie algebra of infinitesimal Courant-Jacobi automorphisms is denoted by $\mathfrak{aut}_{CJ} (\mathbbm D L)$.
\end{definition}

\begin{example}
Let $B$ be a closed Atiyah $2$-form, i.e.~$B \in Z^2_L$. Denote by $\overline B $ the endomorphism of $\mathbbm D L$ given by 
\begin{equation}
\overline B :=  \left(
\begin{array}{cc}
 0 & 0 \\
B_\flat & 0
\end{array}
\right).
\end{equation}
Then $(\overline B, 0)$ is an infinitesimal Courant-Jacobi automorphism, exponentiating to the $B$-field transformation corresponding to $B$.
\end{example}

\begin{example}
Let $\square$ be a derivation of $L$. Define a derivation $\mathcal L_\square$ of $\mathbbm D L$ via
\[
\mathcal L_\square (\Delta, \psi) := ([\square, \Delta], \mathcal L_\square \psi), \quad (\Delta, \psi) \in \Gamma (\mathbbm D L).
\]
It is easy to see that $(\mathcal L_\square, \square)$ is an infinitesimal Courant-Jacobi automorphism. Infinitesimal automorphisms of the form $(\mathcal L_\square, \square)$ together with those of the form $(\overline B, 0)$ from the previous example generate a Lie subalgebra in $\mathfrak{aut}_{CJ} (\mathbbm D L)$ isomorphic to
\[
\mathfrak{aut} (L) \ltimes Z^2_L
\]
in the obvious way. Here $\mathfrak{aut} (L)$ acts on $Z^2_L$ via Lie derivatives.
\end{example}

\begin{proposition}
Let $L \to M$ be a line bundle. Then
\[
\mathfrak{aut}_{CJ}(\mathbbm D L) \cong  \mathfrak{aut} (L) \ltimes Z^2_L 
\]
\end{proposition}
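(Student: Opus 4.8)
The plan is to mimic the proof of the integrated statement (Proposition \ref{prop:Aut_CJ}), but in the linearized setting, so that the semidirect-product structure $\mathfrak{aut}(L) \ltimes Z^2_L$ appears as the infinitesimal shadow of $Z^2_L \rtimes \mathrm{Aut}(L)$. First I would observe that the two families of examples exhibited above give a Lie algebra morphism $\mathfrak{aut}(L) \ltimes Z^2_L \to \mathfrak{aut}_{CJ}(\mathbbm D L)$, namely $(\square, B) \mapsto \mathcal L_\square + \overline{B}$; one checks this is injective by looking at symbols (the symbol of $\mathcal L_\square + \overline B$ is the symbol of $\square$, and if that vanishes then $\square$ is proportional to $\mathbb 1$, and a short computation forces $\square = 0$ and then $\overline B = 0$ since $B_\flat$ determines $B$ by non-degeneracy of the pairing). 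So the content is surjectivity: every infinitesimal Courant-Jacobi automorphism is of this form.

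For surjectivity, let $(\mathbb \Delta, \Delta)$ be an infinitesimal Courant-Jacobi automorphism. Replacing $\mathbb \Delta$ by $\mathbb \Delta - \mathcal L_\Delta$, which is again an infinitesimal Courant-Jacobi automorphism by the Example above (and has symbol zero, since $\mathbb \Delta$ and $\Delta = \sigma(\mathcal L_\Delta)$ have the same symbol), we reduce to the case where $\mathbb \Delta$ is $C^\infty(M)$-linear, i.e.\ an endomorphism of the vector bundle $\mathbbm D L$, and is compatible with the zero derivation of $L$. Writing $\mathbb \Delta$ in block form with respect to $\mathbbm D L = DL \oplus J^1 L$ as $\left(\begin{smallmatrix} a & b \\ c & d\end{smallmatrix}\right)$, the compatibility conditions are exploited in turn: compatibility with $\pr_D$ and with the zero derivation of $L$ gives $[0, \pr_D \alpha] = \pr_D(\mathbb \Delta \alpha)$, i.e.\ $\pr_D \circ \mathbb \Delta = 0$, so $a = 0$ and $b = 0$; compatibility with the pairing $\bla -,-\bra$ (with $\Delta$ now zero) forces $\mathbb \Delta$ to be skew-adjoint for the pairing, which given the block shape $\left(\begin{smallmatrix} 0 & 0 \\ c & d\end{smallmatrix}\right)$ forces $d = 0$ and $c$ to be skew, i.e.\ $c = B_\flat$ for a genuine Atiyah $2$-form $B \in \Omega^2_L$; finally compatibility with the Dorfman bracket $\blq -,-\brq$ is equivalent to $d_D B = 0$, i.e.\ $B \in Z^2_L$. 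Hence $\mathbb \Delta = \overline B$ with $B \in Z^2_L$, and the original $(\mathbb \Delta, \Delta) = (\mathcal L_\Delta + \overline B, \Delta)$ lies in the image.

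The main obstacle — really the only computational step of substance — is the last one: extracting $d_D B = 0$ from the bracket-compatibility identity $\mathbb \Delta \blq \alpha, \beta \brq = \blq \mathbb \Delta \alpha, \beta \brq + \blq \alpha, \mathbb \Delta \beta\brq$ applied to $\alpha = (\Delta_1, \psi_1)$, $\beta = (\Delta_2, \psi_2)$. Using $\overline B(\Delta_i, \psi_i) = (0, \iota_{\Delta_i} B)$ and the formula (\ref{eq:Dorf_brack}) for $\blq-,-\brq$, the $DL$-component of the identity is automatic, while the $J^1 L$-component reduces, after cancellation, to the statement that $\iota_{[\Delta_1,\Delta_2]} B = \mathcal L_{\Delta_1} \iota_{\Delta_2} B - \iota_{\Delta_2} d_D \iota_{\Delta_1} B$ for all derivations $\Delta_1, \Delta_2$, and the Cartan calculus identities for $(\Omega^\bullet_L, d_D)$ (specifically $\mathcal L_{\Delta_1}\iota_{\Delta_2} - \iota_{\Delta_2}\mathcal L_{\Delta_1} = \iota_{[\Delta_1,\Delta_2]}$ and $\mathcal L_{\Delta_1} = \iota_{\Delta_1} d_D + d_D \iota_{\Delta_1}$) turn this into $\iota_{\Delta_2}\iota_{\Delta_1} d_D B = 0$ for all $\Delta_1, \Delta_2$, i.e.\ $d_D B = 0$. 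Since this is the exact infinitesimal analogue of the computation already invoked for Proposition \ref{prop:Aut_CJ}, one could alternatively just differentiate the proof of that proposition; I would include the short direct verification for the reader's convenience, or simply write ``The proof is the infinitesimal version of that of Proposition \ref{prop:Aut_CJ} and is left to the reader,'' matching the style of the surrounding text.
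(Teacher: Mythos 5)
Your proof is correct and is exactly the argument the paper has in mind: the paper omits the proof, deferring to Proposition \ref{prop:Aut_CJ} (itself deferred to the standard Dirac-geometric argument), and your reduction by subtracting $(\mathcal L_\Delta,\Delta)$, the block-matrix analysis forcing the remaining endomorphism to be $\overline B$ with $B\in\Omega^2_L$, and the Cartan-calculus computation extracting $d_D B=0$ from the bracket compatibility is precisely that argument adapted to the omni-Lie algebroid. No gaps.
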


\begin{proof}
The proof is similar to that of Proposition \ref{prop:Aut_CJ}, and it is left to the reader.
\end{proof}

\begin{remark}\label{rem:flow}
Let $(B, \square) \in \mathfrak{aut} (L) \ltimes Z^2_L$, and let $(\mathcal L_\square + \overline B, \square)$ be the corresponding infinitesimal Courant-Jacobi automorphism. If $\square$ generates the flow $\{ \Phi_t \}$ by vector bundle automorphism of $L$, then $(\mathcal L_\square + \overline B, \square)$ generates the flow
\[
\{ (e^{C_t} \circ \mathbbm D \Phi_t, \Phi_t) \}
\]
by Courant-Jacobi automorphisms corresponding to $\{(C_t, \Phi_t)\} \subset Z^2_L \rtimes \mathrm{Aut} (L)$
where 
\[
C_t := - \int_0^t (\Phi_{-\epsilon}^\ast B) d \epsilon.
\]
\end{remark}

\subsection{Generalized contact and Dirac-Jacobi bundles}
\subsubsection{Generalized contact bundles}
A \emph{generalized contact bundle} \cite{VW2016} is a line bundle $L \to M$ equipped with a \emph{generalized contact structure}, i.e.~and endomorphism $\mathbbm K : \mathbbm D L \to \mathbbm D L$ of the omni-Lie algebroid 
such that
\begin{itemize}
\item[$\triangleright$] $\mathbbm K$ is \emph{almost complex}, i.e.~$\mathbbm K^2 = - \mathrm{id}$,
\item[$\triangleright$] $\mathbbm K$ is \emph{skewsymmetric}, i.e.~
$
\bla \mathbbm K \alpha, \beta \bra + \bla \alpha,  \mathbbm K \beta \bra = 0
$ 
for all $\alpha, \beta \in \mathbbm D L$, and
\item[$\triangleright$] $\mathbbm K$ is \emph{integrable}, i.e.
$
\blq \mathbbm K \alpha, \mathbbm K \beta \brq - \blq \alpha, \beta \brq - \mathbbm K \blq \mathbbm K \alpha, \beta \brq - \mathbbm K \blq \alpha, \mathbbm K \beta \brq = 0
$
for all $\alpha, \beta \in \Gamma (\mathbbm D L)$.
\end{itemize}
Let $(L \to M, \mathbbm K)$ be a generalized contact bundle. Then $M$ is odd-dimensional. Actually, generalized contact bundles are odd-dimensional analogues of generalized complex manifolds and they encompass contact manifolds and complex structures on the gauge algebroid of $L$ as extreme cases. To see this, use the direct sum decomposition $\mathbbm D L = D L \oplus J^1 L$ to present $\mathbbm K$ in the form
\begin{equation}\label{eq:split}
\Ii = \left(
\begin{array}{cc}
\varphi & J^\sharp \\
\omega_\flat & -\varphi^\dag
\end{array}
\right).
\end{equation}
Then
\begin{itemize}
\item[$\triangleright$] $\varphi : D L \to D L$ is a vector bundle endomorphism, 

\item[$\triangleright$] $\varphi^\dag : J^1 L \to J^1 L$ is its \emph{adjoint}, i.e.~$\langle \varphi^\dag \psi, \Delta \rangle = \langle \psi, \varphi \Delta \rangle$, $(\Delta, \psi) \in \mathbbm D L$,

\item[$\triangleright$] $J : \wedge^2 J^1 L \to L$ is a $2$-form with sharp map $J^\sharp : J^1 L \to D L$, and
\item[$\triangleright$]  $\omega : \wedge^2 D L \to L$ is an Atiyah $2$-form with flat map $\omega_\flat : D L \to J^1 L$.
\end{itemize}
Additionally, $\varphi, J, \omega$ satisfy some identities \cite{VW2016}. In particular, $J$ is a \emph{Jacobi bracket}, so that $(L, J)$ is a \emph{Jacobi bundle} \cite{Marle1991}. When $\varphi = 0$, then $\omega_\flat^{-1} = - J^\sharp$ and $J$ is the Jacobi bracket of a (unique) contact structure $H \subset M$ such that $TM/H = L$ (see Example \ref{ex:Jac_nd}).
When $J = \omega = 0$, then $\varphi$ is an integrable complex structure on the gauge algebroid $DL$ (see Appendix \ref{appendix}).

\begin{remark}
Let $\mathbbm K$ be a generalized contact structure on $L$, and let $(\mathbb \Phi, \Phi)$ be a Courant-Jacobi automorphism of $\mathbbm D L$. Then $\mathbb \Phi \circ \mathbbm K \circ \mathbb \Phi^{-1}$ is a generalized contact structure as well. In particular, for $(\mathbb \Phi, \Phi) = (e^B , \mathrm{id})$, the $B$-field transformation corresponding to a closed Atiyah $2$-form $B$, we obtain that $e^B \circ \mathbbm K \circ e^{-B}$ is a generalized contact structure. The latter will be denoted by $\mathbbm K^{B}$.
\end{remark}

\subsubsection{Dirac-Jacobi bundles}
Similarly as for generalized complex structures, generalized contact structures can be seen as (particularly nice) complex \emph{Dirac-Jacobi structures}, i.e.~complex Dirac structures in the omni-Lie algebroid. A Dirac-Jacobi structure on $L$ \cite{V2015} (see also \cite{CL2010, CLS2011}) is a vector subbundle $\mathfrak L \subset \mathbbm D L$ such that
\begin{itemize}
\item[$\triangleright$] $\mathfrak L$ is \emph{maximally isotropic} with respect to $\bla -, - \bra$;
\item[$\triangleright$] $\mathfrak L$ is \emph{involutive}, i.e.~$\blq \Gamma (\mathfrak L), \Gamma (\mathfrak L) \brq \subset \Gamma (\mathfrak L)$.
\end{itemize}

\begin{remark}
Recall that there is an alternative formulation of (standard) Dirac structures (in particular, generalized complex structures) via the Clifford algebra of the generalized tangent bundle and pure spinors. There is a similar formulation for Dirac-Jacobi bundles, but it is much more involved. Actually, it exploits the technology of \emph{quadratic modules} and their (\emph{even}) \emph{Clifford algebras} (see \cite{BK1994}, see also \cite{A2011} and references therein). We mean to develop this line of thoughts in a future project. 
\end{remark}

\begin{example}\label{ex:DJ}
\quad
\begin{itemize}
\item[$\triangleright$]  Let $L \to M$ be a line bundle and let $J : \wedge^2 J^1 L \to L$ be a bi-differential operator on $\Gamma (L)$. Then $\operatorname{\mathsf{graph}} J := \{(J^\sharp \psi, \psi) : \psi \in J^1 L \}\subset \mathbbm D L$ is a maximally isotropic subbundle, and it is a Dirac-Jacobi structure if and only if $J$ is a Jacobi structure.
\item[$\triangleright$] Let $L \to M$ be a line bundle and let $\omega \in \Omega^2_L$ be an Atiyah $2$-form on $L$. Then $\operatorname{\mathsf{graph}} \omega := \{(\Delta, \iota_\Delta \omega) : \Delta \in DL \}\subset \mathbbm D L$ is a maximally isotropic subbundle, and it is a Dirac-Jacobi structure if and only if $d_D \omega = 0$.
\end{itemize}
\end{example}

Now, let $\mathbbm K$ be a generalized contact structure on $L$. Consider the complexified omni-Lie algebroid $\mathbbm D L \otimes \mathbbm C$, and let $\mathfrak L_{\mathbbm K} \subset \mathbbm DL \otimes \mathbbm C$ be the $+\mathrm{i}$-eigenbundle of $\mathbbm K$. Then $\mathfrak L_{\mathbbm K}$ is a (complex) Dirac-Jacobi structure such that $\mathfrak L_{\mathbbm K} \cap \overline{\mathfrak L}_{\mathbbm K} = 0$, in particular $\mathbbm D L \otimes \mathbbm C= \mathfrak L_{\mathbbm K} \oplus \overline{\mathfrak L}_{\mathbbm K}$. Additionally, $\mathfrak L_{\mathbbm K}$ contains the full information about $\mathbbm K$. Finally, all Dirac-Jacobi structures $\mathfrak L \subset \mathbbm D L \otimes \mathbbm C$ such that $\mathfrak L \cap \overline{\mathfrak L}  = 0$ arise in this way, and we will call them complex Dirac-Jacobi structures of \emph{generalized contact type}.

\begin{remark}
Let $\mathfrak L$ be a Dirac-Jacobi structure on $L$, 
and let $(\mathbb \Phi, \Phi)$ be a Courant-Jacobi 
automorphism of $\mathbbm D L$. Then $\mathbb \Phi (\mathfrak L)$ 
is a Dirac-Jacobi structure as well. In particular, $e^B (\mathfrak L)$ 
is a Dirac-Jacobi structure, denoted by $\mathfrak L^B$, 
for every closed Atiyah $2$-form $B$. If $\mathfrak L = \mathfrak L_{\mathbbm K}$ 
is the $+\mathrm{i}$-eigenbundle of
a generalized contact structure $\mathbbm K$, then 
$e^B (\mathfrak L) = \mathfrak L_{\mathbbm K}^B =\mathfrak L_{\mathbbm K^{B}}$. We stress that, in general, $\mathbbm K^B = e^B \circ \mathbbm K \circ e^{-B}$ is not a honest generalized contact structure, unless $B$ is a \emph{real} Atiyah form (see, e.g., Remark \ref{rem:can_odd}).
\end{remark}

\begin{lemma}\label{lem:p_DLcap}
Let $\mathbbm K$ be a generalized contact structure as in (\ref{eq:split}), and let $\mathfrak L = \mathfrak L_{\mathbbm K} \subset \mathbbm DL \otimes \mathbbm C$ be its $+\mathrm{i}$-eigenbundle. Then
\[
p_D \mathfrak L \cap p_D \overline{\mathfrak L} = \operatorname{im} J^\sharp \otimes \mathbbm C.
\]
\end{lemma}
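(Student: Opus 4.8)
The plan is to compute both sides of the asserted equality directly from the matrix description (\ref{eq:split}), using the defining relation for the $+\mathrm{i}$-eigenbundle $\mathfrak L = \mathfrak L_{\mathbbm K}$. First I would write down an arbitrary element of $\mathfrak L$ explicitly: a pair $(\Delta, \psi) \in \mathbbm D L \otimes \mathbbm C$ lies in $\mathfrak L$ iff $\mathbbm K (\Delta,\psi) = \mathrm i (\Delta, \psi)$, which by (\ref{eq:split}) unpacks into the two equations
\[
\varphi \Delta + J^\sharp \psi = \mathrm i \Delta, \qquad \omega_\flat \Delta - \varphi^\dag \psi = \mathrm i \psi .
\]
From the first equation, $\mathrm i \Delta - \varphi \Delta = J^\sharp \psi$, so the $D L$-component of any element of $\mathfrak L$ is $\Delta = (\mathrm i - \varphi)^{-1} J^\sharp \psi$ — here one uses that $\mathrm i - \varphi$ is invertible on the complexification, since $\varphi$ is a real endomorphism of $DL$ and hence has only real eigenvalues, so $\mathrm i$ is not one of them. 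Consequently $p_D \mathfrak L = (\mathrm i - \varphi)^{-1} (\operatorname{im} J^\sharp \otimes \mathbbm C) = (\mathrm i - \varphi)^{-1} (\operatorname{im} J^\sharp) \otimes \mathbbm C$; and since $\overline{\mathfrak L}$ is the $-\mathrm i$-eigenbundle, conjugating gives $p_D \overline{\mathfrak L} = (-\mathrm i - \varphi)^{-1}(\operatorname{im} J^\sharp)\otimes \mathbbm C = (\mathrm i + \varphi)^{-1}(\operatorname{im} J^\sharp) \otimes \mathbbm C$ (using that $\varphi$ is real).

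**Next** I would compute the intersection. The inclusion $\operatorname{im} J^\sharp \otimes \mathbbm C \subseteq p_D \mathfrak L \cap p_D \overline{\mathfrak L}$ should be the substantive half and requires one of the algebraic identities relating $\varphi$ and $J$ coming from $\mathbbm K^2 = -\mathrm{id}$ and skew-symmetry: namely the $(1,1)$-block of $\mathbbm K^2 = -\mathrm{id}$ reads $\varphi^2 + J^\sharp \omega_\flat = -\mathrm{id}$, while the skew-symmetry of $\mathbbm K$ forces $\omega_\flat \varphi = \varphi^\dag \omega_\flat$ (equivalently $\varphi$ and $J^\sharp$ are compatible with the pairing) and, crucially, that $J^\sharp$ intertwines $\varphi$ and $-\varphi^\dag$, i.e. $\varphi \circ J^\sharp = J^\sharp \circ (-\varphi^\dag)$. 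This last relation gives $(\mathrm i \mp \varphi) J^\sharp = J^\sharp (\mathrm i \pm \varphi^\dag)$, hence $J^\sharp = (\mathrm i - \varphi)^{-1} J^\sharp (\mathrm i + \varphi^\dag)$ and likewise with signs flipped, so $\operatorname{im} J^\sharp = (\mathrm i - \varphi)^{-1}(\operatorname{im} J^\sharp) = (\mathrm i + \varphi)^{-1}(\operatorname{im}J^\sharp)$ (as real subbundles, noting $\mathrm i \pm \varphi^\dag$ is invertible on $J^1L\otimes\mathbbm C$). This simultaneously shows that $p_D \mathfrak L$ and $p_D\overline{\mathfrak L}$ both already equal $\operatorname{im} J^\sharp \otimes \mathbbm C$, which makes the intersection trivially $\operatorname{im} J^\sharp \otimes \mathbbm C$ and in fact proves a slightly stronger statement.

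**An alternative, more economical route** avoids computing $p_D\mathfrak L$ in full: clearly $\operatorname{im} J^\sharp \otimes \mathbbm C \subseteq p_D\mathfrak L$ (take $\Delta = J^\sharp\psi + \varphi\Delta$... more cleanly, any $\psi$ yields the element $((\mathrm i-\varphi)^{-1}J^\sharp\psi, \psi)\in\mathfrak L$ whose $D$-projection ranges over $(\mathrm i-\varphi)^{-1}\operatorname{im}J^\sharp$, and the intertwining identity identifies this with $\operatorname{im}J^\sharp\otimes\mathbbm C$), and symmetrically for $\overline{\mathfrak L}$, giving ``$\supseteq$''. For ``$\subseteq$'', suppose $\Delta \in p_D\mathfrak L \cap p_D\overline{\mathfrak L}$, so $\Delta = (\mathrm i-\varphi)^{-1}J^\sharp\psi = (-\mathrm i-\varphi)^{-1}J^\sharp\chi$ for some $\psi,\chi$; then $\Delta = (\mathrm i - \varphi)^{-1}J^\sharp\psi \in (\mathrm i-\varphi)^{-1}\operatorname{im}J^\sharp = \operatorname{im}J^\sharp\otimes\mathbbm C$ by the intertwining identity, which is all we need.

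**The main obstacle** will be pinning down exactly which of the ``identities satisfied by $\varphi, J, \omega$'' from \cite{VW2016} is the one doing the work — specifically confirming the intertwining relation $\varphi \circ J^\sharp = -J^\sharp \circ \varphi^\dag$ (i.e. that $J^\sharp$ carries $\varphi^\dag$ to $-\varphi$), which is what forces $\mathrm i-\varphi$ to act invertibly-and-stably on $\operatorname{im}J^\sharp$. This follows from the off-diagonal $(1,2)$-block of $\mathbbm K^2 = -\mathrm{id}$, which reads $\varphi J^\sharp - J^\sharp \varphi^\dag = 0$ — wait, the correct block is $\varphi J^\sharp + J^\sharp(-\varphi^\dag) = 0$, i.e. precisely $\varphi J^\sharp = J^\sharp\varphi^\dag$; I would double-check the sign bookkeeping against the conventions in (\ref{eq:split}) before committing, since the whole argument hinges on $(\mathrm i \mp \varphi)$ mapping $\operatorname{im}J^\sharp$ isomorphically onto itself. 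Everything else is linear algebra over $C^\infty(M)$ fiberwise, and the invertibility of $\mathrm i \pm \varphi$ and $\mathrm i \pm \varphi^\dag$ is immediate from $\varphi$ being a real endomorphism.
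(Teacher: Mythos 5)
Your argument breaks at the very first step. The claim that $\mathrm i - \varphi$ is invertible ``since $\varphi$ is a real endomorphism of $DL$ and hence has only real eigenvalues'' is false: real endomorphisms can perfectly well have non-real eigenvalues, and in the present setting they typically do. Indeed, in the extreme case of a generalized contact structure of complex type ($J = \omega = 0$, Appendix A of the paper) one has $\varphi^2 = -\mathrm{id}$, so the eigenvalues of $\varphi$ are exactly $\pm\mathrm i$ and $\mathrm i - \varphi$ is nowhere invertible. Consequently the formula $\Delta = (\mathrm i - \varphi)^{-1} J^\sharp \psi$ is not available, and the stronger conclusion you draw from it, namely $p_D \mathfrak L = \operatorname{im} J^\sharp \otimes \mathbbm C$, is simply wrong: in the complex-type case $p_D \mathfrak L = D^{(1,0)}L \neq 0$ while $\operatorname{im} J^\sharp = 0$ (only the intersection $p_D\mathfrak L \cap p_D\overline{\mathfrak L}$ collapses to $\operatorname{im}J^\sharp\otimes\mathbbm C$, which is what the lemma asserts). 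There is also a sign slip in the intertwining step: the $(1,2)$-block of $\mathbbm K^2=-\mathrm{id}$ gives $\varphi J^\sharp = J^\sharp\varphi^\dag$, hence $(\mathrm i - \varphi)J^\sharp = J^\sharp(\mathrm i - \varphi^\dag)$ with the \emph{same} sign on both sides, not $(\mathrm i \mp \varphi)J^\sharp = J^\sharp(\mathrm i \pm \varphi^\dag)$; but this is secondary to the invertibility problem. Your ``alternative, more economical route'' inherits the same flaw, since both of its inclusions again pass through $(\mathrm i\pm\varphi)^{-1}$.

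The correct elementary argument needs no inverses at all, only the eigenvalue equations. For the inclusion $p_D\mathfrak L \cap p_D\overline{\mathfrak L} \subseteq \operatorname{im}J^\sharp\otimes\mathbbm C$: if $(\Delta,\psi)\in\mathfrak L$ and $(\Delta,\chi)\in\overline{\mathfrak L}$, the first components of $\mathbbm K(\Delta,\psi)=\mathrm i(\Delta,\psi)$ and $\mathbbm K(\Delta,\chi)=-\mathrm i(\Delta,\chi)$ read $\mathrm i\Delta = \varphi\Delta + J^\sharp\psi$ and $-\mathrm i\Delta = \varphi\Delta + J^\sharp\chi$; subtracting gives $\Delta = J^\sharp(\psi-\chi)/2\mathrm i$. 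For the reverse inclusion: for any $\eta\in J^1L\otimes\mathbbm C$ one has $\mathrm i\left(\mathrm{id}-\mathrm i\,\mathbbm K\right)(0,\eta) = (J^\sharp\eta,\ \mathrm i\eta - \varphi^\dag\eta)\in\mathfrak L$, since $\mathrm{id}-\mathrm i\,\mathbbm K$ maps onto the $+\mathrm i$-eigenbundle, so $J^\sharp\eta\in p_D\mathfrak L$; conjugating (or using $\mathrm{id}+\mathrm i\,\mathbbm K$) gives $J^\sharp\eta\in p_D\overline{\mathfrak L}$. These are exactly the manipulations the paper itself carries out inside the proof of Proposition 3.4.1 (the element $(\Delta,\mathrm i\eta-\varphi^\dag\eta)=\mathrm i(\mathrm{id}-\mathrm i\,\mathbbm K)(0,\eta)$ and the identity $\Delta = J^\sharp(\operatorname{Im}\chi)$); for the lemma itself the paper just cites the analogous statements in Gualtieri's generalized complex geometry.
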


\begin{proof}
The proof follows the same lines as in standard Generalized Gomplex Geometry \cite[Proposition 3.24 and Corollary 3.25]{G2011}.
\end{proof}

\subsubsection{The $2$-form of a complex Dirac-Jacobi structure}

Let $L \to M$ be a line bundle and let $\mathfrak L \subset \mathbbm D L\otimes \mathbbm C$ be a complex Dirac-Jacobi structure on $L$.
There is a canonical skew-symmetric, $L \otimes \mathbbm C$-valued bilinear map $\varpi$ defined pointwise on the smooth, but not necessarily regular, subbundle $p_D \mathfrak L$ as follows:
\begin{equation}\label{eq:varpi}
\varpi : \wedge^2 p_D \mathfrak L \to L \otimes \mathbbm C, \quad (\Delta, \nabla) \mapsto \langle  \psi, \nabla \rangle,
\end{equation}
here $\psi \in J^1 L \otimes \mathbbm C$ is any $1$-jet such that $(\Delta, \psi) \in \mathfrak L$. It immediately follows from the definition of $\varpi$, that
\begin{equation}\label{eq:L_pi}
\mathfrak L = \left\{ (\Delta, \psi) \in \mathbbm D L \otimes \mathbbm C:  \langle \psi , \nabla \rangle =  \varpi (\Delta, \nabla) \text{ for all }\nabla \in p_D \mathfrak L \right\}.
\end{equation}
Similarly as in generalized complex geometry \cite{AB2006}, when $\mathfrak L$ is of generalized contact type, we can relate $\varpi$ to the corresponding generalized contact structure $\mathbbm K$. First consider the complex conjugate form
\[
\overline \varpi : \wedge^2 p_D \overline{\mathfrak L} \to L \otimes \mathbbm C, \quad (\Delta, \nabla) \mapsto \overline{\varpi (\overline \Delta, \overline \nabla)}.
\]
The real and imaginary parts of $\varpi$:
\[
\operatorname{Re} \varpi := \frac{1}{2}(\varpi + \overline \varpi) \quad \text{and} \quad
\operatorname{Im} \varpi := \frac{1}{2\mathrm{i}}(\varpi - \overline \varpi)
\]
are only defined on the intersection $p_D \mathfrak L \cap p_D \overline{\mathfrak L} = \operatorname{im} J^\sharp \otimes \mathbbm C$, and we have the following 
\begin{lemma}\label{lem:impi}
Let $\mathbbm K$ be a generalized contact structure as in (\ref{eq:split}), let $\mathfrak L = \mathfrak L_{\mathbbm K}$ be its $+\mathrm{i}$-eigenbundle, and let $\varpi$ be the canonical $2$-form on $p_D \mathfrak L$. Then
\begin{equation}\label{eq:impi}
- J ( \psi, \psi') = \operatorname{Im} \varpi (J^\sharp \psi, J^\sharp \psi')
\end{equation}
for all $\psi, \psi ' \in J^1 L \otimes \mathbbm C$.

\end{lemma}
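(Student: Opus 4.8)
The plan is to unwind the definitions of $\varpi$, $\overline{\varpi}$, and the decomposition (\ref{eq:split}) of $\mathbbm K$, and then perform a direct computation analogous to the one relating the $2$-form of a generalized complex structure to its defining endomorphism. First I would fix $\psi, \psi' \in J^1 L \otimes \mathbbm C$ and consider the elements $J^\sharp \psi, J^\sharp \psi' \in p_D \mathfrak L \cap p_D \overline{\mathfrak L}$. To evaluate $\varpi(J^\sharp \psi, J^\sharp \psi')$ I need a $1$-jet $\chi$ with $(J^\sharp \psi, \chi) \in \mathfrak L$, i.e.\ $(J^\sharp \psi, \chi)$ in the $+\mathrm i$-eigenspace of $\mathbbm K$; using (\ref{eq:split}) this means $\varphi(J^\sharp\psi) + J^\sharp \chi = \mathrm i\, J^\sharp \psi$ and $\omega_\flat(J^\sharp \psi) - \varphi^\dag \chi = \mathrm i\, \chi$. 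A natural guess, suggested by the structure equations, is to take $\chi$ related to $\psi$ via these relations; in the model case $\varphi = 0$ one simply gets $\chi = \mathrm i \psi$, and in general one expects $\chi = \mathrm i \psi - (\text{correction involving }\varphi^\dag)$, but the point is only that $\langle \chi, J^\sharp \psi'\rangle$ can be computed. Then $\varpi(J^\sharp\psi, J^\sharp\psi') = \langle \chi, J^\sharp \psi'\rangle$, and $\overline{\varpi}(J^\sharp\psi, J^\sharp\psi') = \overline{\varpi(\overline{J^\sharp\psi}, \overline{J^\sharp\psi'})} = \overline{\langle \overline{\chi}, J^\sharp \overline{\psi'}\rangle} = \langle \chi, J^\sharp \psi'\rangle$ evaluated with the conjugate $1$-jet — I would track the conjugates carefully here.

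Next I would assemble $\operatorname{Im}\varpi(J^\sharp\psi, J^\sharp\psi') = \tfrac{1}{2\mathrm i}\big(\varpi - \overline{\varpi}\big)(J^\sharp\psi,J^\sharp\psi')$. The key identities to invoke are: the skewsymmetry of $\mathbbm K$ with respect to $\bla -,-\bra$, which forces $\omega$ to be genuinely skew and $\varphi^\dag$ to be the honest adjoint of $\varphi$; the relation $\langle \psi, J^\sharp \psi'\rangle = J(\psi, \psi')$ coming from the definition of the sharp map of $J$ together with the fact that $J$ is skew; and the eigenvalue equations above, which let me eliminate $\chi$ in favour of $\psi$, $\varphi$, and $\omega$. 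Combining the expression for $\varpi$ with its conjugate, the $\varphi$-dependent pieces should cancel (exactly as the analogous $b$-dependent pieces cancel in the complex generalized case, cf.\ \cite{AB2006}), leaving precisely $-J(\psi,\psi')$. I would also need to double-check that $\operatorname{Im}\varpi$ is well defined on $\operatorname{im} J^\sharp \otimes \mathbbm C = p_D \mathfrak L \cap p_D \overline{\mathfrak L}$ (Lemma \ref{lem:p_DLcap}), so that the left-hand side of (\ref{eq:impi}) is meaningful — but this is guaranteed by that lemma and needs no separate argument.

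The main obstacle, I expect, is bookkeeping: keeping straight which $1$-jet represents which derivation in the pairing, handling the complex conjugation in the definition of $\overline\varpi$ without sign errors, and correctly using the block identities for $\varphi$, $J^\sharp$, $\omega_\flat$ that come from $\mathbbm K^2 = -\mathrm{id}$ and skewsymmetry (for instance $\varphi\circ\varphi - J^\sharp\circ\omega_\flat = -\mathrm{id}$ and $\varphi\circ J^\sharp = J^\sharp\circ\varphi^\dag$). There is no deep difficulty beyond that — the statement is a purely pointwise, algebraic fact about the endomorphism $\mathbbm K$, requiring no integrability and no analysis — so the argument is essentially a careful transcription of the corresponding computation in generalized complex geometry, substituting $J^1 L$ and $DL$ for $T^\ast M$ and $TM$ and the pairing $\langle -,-\rangle$ for the canonical one. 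Accordingly I would present it compactly, pointing to \cite{AB2006} for the parallel argument and spelling out only the steps where the line-bundle pairing or the Jacobi sharp map behaves differently from the classical case.
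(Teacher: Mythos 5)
Your plan is correct and is essentially the paper's argument, which simply defers to the analogous pointwise computation in generalized complex geometry (Abouzaid--Boyarchenko): the representative you are guessing at is exactly $\chi=\mathrm{i}\psi-\varphi^\dag\psi$, obtained as the jet component of $\mathrm{i}(\mathrm{id}-\mathrm{i}\mathbbm K)(0,\psi)\in\Gamma(\mathfrak L)$, and with it $\varpi(J^\sharp\psi,J^\sharp\psi')=\mathrm{i}\langle\psi,J^\sharp\psi'\rangle-\langle\psi,\varphi J^\sharp\psi'\rangle$, whose $\varphi$-term is real and cancels when you take $\operatorname{Im}\varpi$, leaving $\langle\psi,J^\sharp\psi'\rangle=-J(\psi,\psi')$. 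Just be careful at the conjugation step: $\overline\varpi(J^\sharp\psi,J^\sharp\psi')$ is computed from the representative $\mathrm{i}\overline\psi-\varphi^\dag\overline\psi$ of $J^\sharp\overline\psi$ in $\mathfrak L$ (not from $\overline\chi$, which lies in $\overline{\mathfrak L}$), exactly the bookkeeping you flagged.
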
 

\begin{proof}
The proof follows the same lines as in Generalized Complex Geometry  (see, e.g.~\cite[Lemmas 3.1, 3.2 and 3.3]{AB2006}).
\end{proof}

\begin{remark}\label{rem:impi}
As recalled in Example \ref{ex:DJ}, every Jacobi structure $J$ on a line bundle $L \to M$ determines a real Dirac-Jacobi structure: $\mathfrak L_J := \operatorname{\mathsf{graph}} J \subset DL \oplus J^1 L = \mathbbm D L$, and we have $p_D \mathfrak L_J = \operatorname{im} J^\sharp$. In turn, for every real Dirac-Jacobi structure $\mathfrak L$ on $L \to M$, there is a canonical $L$-valued $2$-form $ \wedge^2 p_D \mathfrak L \to L$, defined by the same formula (\ref{eq:varpi}) as $\varpi$.  Formula (\ref{eq:impi}) then states that the imaginary part of $\varpi$ agrees with the (complexification of the) $2$-form $\omega_J : \wedge^2 \operatorname{im} J^\sharp \to L$ induced by the Jacobi structure $J$ underlying the generalized contact structure $\mathbbm K$:
\[
\operatorname{Im} \varpi = \omega_J.
\]
Notice that $\omega_J$, hence $\operatorname{Im} \varpi$, is (pointwise) non-degenerate.
\end{remark}

\subsubsection{Backward images of (complex) Dirac-Jacobi structures}
Let $(L \to M, \mathbbm K)$ be a generalized contact bundle, and let $\mathfrak L = \mathfrak L_{\mathbbm K} \subset \mathbbm D L\otimes \mathbbm C$ be its $+\mathrm{i}$-eigenbundle. Like in generalized complex geometry, not all submanifolds of $M$ inherit from $\mathbbm K$ a generalized contact bundle structure. However, all submanifolds of $M$ inherits from $\mathfrak L$ a complex Dirac-Jacobi structure (up to regularity issues), via the \emph{backward image} construction which we now recall for later use. We describe backward images for real Dirac-Jacobi structures. The following considerations extend straightforwardly to complex Dirac-Jacobi structures. So let $L \to M$ be a line bundle equipped with a Dirac-Jacobi structure $\mathfrak L \subset \mathbbm D L$. Consider another line bundle $L_N \to N$ together with a regular vector bundle map $\Phi : L_N \to L$ covering a smooth map $\phi : N \to M$.

We recall from Subsection \ref{subsec:gauge_alg} that, by a \emph{regular} vector bundle map, we mean a fiber-wise invertible vector bundle map. Now, define a subbundle $\Phi^! \mathfrak L \subset \mathbbm D L_N$ via
\[
\Phi^! \mathfrak L := \left\{(\Delta, \Phi^\ast \psi) \in \mathbbm D L_N: (\Phi_\ast \Delta, \psi) \in \mathfrak L \right\}.
\]
The bundle $\Phi^! \mathfrak L$ is always maximal isotropic, hence it has constant rank, but it needs not to be smooth. Nonetheless, if it is smooth, it is a honest vector subbundle, and a Dirac-Jacobi structure on $L_N$, called the \emph{backward image} of $\mathfrak L$ along $\Phi$. There is a simple sufficient condition for smoothness, sometimes referred to as the \emph{clean intersection condition} \cite{B2013b, V2015}. For the purposes of this paper, we only need to know that:

\begin{itemize}
\item[$\triangleright$] if $\phi$ is a submersion, the clean intersection condition is automatically satisfied, hence $\Phi^! \mathfrak L$ is a Dirac-Jacobi structure;
\item[$\triangleright$] if $\phi$ is the immersion of a(n immersed) submanifold $S \hookrightarrow M$, the clean intersection condition boils down to 
\begin{equation}\label{eq:cic}
\operatorname{rank} \left(p_D \mathfrak L|_S + D (L|_S) \right) = \text{constant}.
\end{equation}
\end{itemize}
We refer to \cite{B2013b, V2015} for more details.

\subsubsection{Products of Dirac-Jacobi structures}\label{subsec:prod_DJ}

Let $(M_1, \mathfrak L_1)$ and $(M_2, \mathfrak L_2)$ be manifolds equipped with (standard) Dirac structures, i.e.~maximally isotropic, and involutive subbundles $\mathfrak L_i$ of the \emph{generalized tangent bundles} $\mathbbm T M_i = TM_i \oplus T^\ast M_i$, $i = 1,2$. Then $\mathfrak L_1 \times \mathfrak L_2 \subset \mathbbm T M_1 \times \mathbbm T M_2 = \mathbbm T (M_1 \times M_2)$ is a Dirac structure on the product $M_1 \times M_2$, called the \emph{product of $\mathfrak L_1$ and $\mathfrak L_2$}. It is not immediately obvious how to extend this simple construction to line bundles and Dirac-Jacobi structures. In this section we propose such an extension. The \emph{splitting theorems} of Section \ref{Sec:splitting} will be formulated in terms of the \emph{product of Dirac-Jacobi structures} as defined here. 

Begin with two Dirac-Jacobi structures $\mathfrak L_1, \mathfrak L_2 \subset \mathbbm D L$ on the same line bundle $L \to M$. Let $\mathfrak L_1 \star \mathfrak L_2 \subset \mathbbm D L$ be the (not necessarily regular) subbundle defined by
\begin{equation}\label{eq:star}
\mathfrak L_1 \star \mathfrak L_2 := \left\{ (\Delta, \psi_1 + \psi_2) : (\Delta, \psi_i) \in \mathfrak L_i, \, i = 1,2 \right\}.
\end{equation}
A similar construction for Dirac structures appeared probably in \cite{G2011} for the first time (see also \cite{ABM2009}).
\begin{lemma}\label{Lem: Product}
If  $\mathfrak L_1 \star \mathfrak L_2 \subset \mathbbm D L$ is a smooth subbundle, then it is a Dirac-Jacobi structure.
\end{lemma}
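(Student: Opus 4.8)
The plan is to verify the two defining properties of a Dirac--Jacobi structure separately: maximal isotropy with respect to $\bla-,-\bra$, and involutivity under $\blq-,-\brq$. Smoothness is assumed, so $\mathfrak L_1 \star \mathfrak L_2$ is a genuine vector subbundle, and it suffices to check these two conditions pointwise (for isotropy) and on sections (for involutivity).

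\textbf{Isotropy and maximality.} First I would show $\mathfrak L_1 \star \mathfrak L_2$ is isotropic. Take two elements $(\Delta, \psi_1 + \psi_2)$ and $(\Delta', \psi_1' + \psi_2')$ with $(\Delta, \psi_i), (\Delta', \psi_i') \in \mathfrak L_i$. Expanding $\bla (\Delta, \psi_1+\psi_2), (\Delta', \psi_1'+\psi_2')\bra = \langle \psi_1', \Delta\rangle + \langle \psi_2', \Delta\rangle + \langle \psi_1 + \psi_2, \Delta'\rangle$, one regroups the terms into $\bla(\Delta,\psi_1),(\Delta',\psi_1')\bra + \bla(\Delta,\psi_2),(\Delta',\psi_2')\bra + \langle \psi_2', \Delta\rangle + \langle \psi_1, \Delta'\rangle$. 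The first two brackets vanish since each $\mathfrak L_i$ is isotropic. For the remaining cross terms, the key observation is that since $(\Delta, \psi_1), (\Delta', \psi_1') \in \mathfrak L_1$ and $\mathfrak L_1$ is isotropic we get $\langle \psi_1', \Delta\rangle = -\langle \psi_1, \Delta'\rangle$, and similarly $\langle \psi_2', \Delta\rangle = -\langle \psi_2, \Delta'\rangle$; combining these with the vanishing of $\bla(\Delta,\psi_1),(\Delta',\psi_2')\bra$-type terms is not quite immediate, so the cleanest route is actually to note that $(\Delta, \psi_i) \in \mathfrak L_i$ forces $\langle \psi_i, \Delta'\rangle + \langle \psi_i', \Delta\rangle = 0$ whenever both $(\Delta, \psi_i)$ and $(\Delta', \psi_i')$ lie in $\mathfrak L_i$, and use that $(\Delta, \psi_1 - \psi_1') \cdots$ — here I will instead appeal directly to maximality: a maximally isotropic subbundle of $\mathbbm D L$ has rank equal to $\operatorname{rank} DL = \operatorname{rank} J^1 L$. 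One shows that $\mathfrak L_1 \star \mathfrak L_2 = (e^{-B})(\mathfrak L_1)$ whenever $\mathfrak L_2 = \operatorname{\mathsf{graph}} B$ is the graph of a closed Atiyah $2$-form; but in general $\mathfrak L_2$ need not be a graph, so the robust argument is: $\mathfrak L_1 \star \mathfrak L_2$ is the image of the fibered product $\mathfrak L_1 \times_{DL} \mathfrak L_2$ under the map $((\Delta,\psi_1),(\Delta,\psi_2)) \mapsto (\Delta, \psi_1+\psi_2)$, whose kernel is $\{((0,\psi),(0,-\psi)) : \psi \in \ker p_D|_{\mathfrak L_1} \cap (-\ker p_D|_{\mathfrak L_2})\}$; a dimension count using $\operatorname{rank}\mathfrak L_i = \operatorname{rank} DL$ and $\operatorname{rank}(\mathfrak L_1 \cap J^1 L) + \operatorname{rank} p_D\mathfrak L_i = \operatorname{rank} DL$ gives $\operatorname{rank}(\mathfrak L_1\star\mathfrak L_2) = \operatorname{rank} DL$, and one checks $(\mathfrak L_1 \star \mathfrak L_2) \subset (\mathfrak L_1 \star \mathfrak L_2)^\perp$ directly from the definition, hence equality by the rank count.

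\textbf{Involutivity.} For closure under the Dorfman bracket $\blq-,-\brq$, take sections $(\Delta,\psi_1+\psi_2)$ and $(\Delta',\psi_1'+\psi_2')$ of $\mathfrak L_1\star\mathfrak L_2$, with $(\Delta,\psi_i),(\Delta',\psi_i')$ sections of $\mathfrak L_i$. Using the explicit formula (\ref{eq:Dorf_brack}), bilinearity of $\iota$ and $\mathcal L$ in the jet argument, and the fact that $d_D$ and $\mathcal L_\Delta$ are linear, one expands
\[
\blq (\Delta,\psi_1+\psi_2),(\Delta',\psi_1'+\psi_2')\brq = \big([\Delta,\Delta'],\ \mathcal L_\Delta\psi_1' - \iota_{\Delta'}d_D\psi_1 + \mathcal L_\Delta\psi_2' - \iota_{\Delta'}d_D\psi_2\big),
\]
which is exactly the sum of $\blq(\Delta,\psi_1),(\Delta',\psi_1')\brq \in \Gamma(\mathfrak L_1)$ and $\blq(\Delta,\psi_2),(\Delta',\psi_2')\brq \in \Gamma(\mathfrak L_2)$, both lying over the same derivation $[\Delta,\Delta']$. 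Hence the result lies in $\Gamma(\mathfrak L_1\star\mathfrak L_2)$. Since sections of the form $(\Delta,\psi_1+\psi_2)$ with $(\Delta,\psi_i)\in\Gamma(\mathfrak L_i)$ span $\Gamma(\mathfrak L_1\star\mathfrak L_2)$ over $C^\infty(M)$ (here using smoothness, so that locally one can choose such splittings of a frame), and the Dorfman bracket is a derivation in each slot up to the anchor term — which is handled by the Leibniz property of $\blq-,-\brq$ and the fact that $\mathfrak L_1\star\mathfrak L_2$ is $C^\infty(M)$-stable — involutivity on a spanning set extends to all sections.

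\textbf{Main obstacle.} The genuinely delicate point is the maximal isotropy, specifically getting the rank count right when $\mathfrak L_1$ and $\mathfrak L_2$ have $p_D$-images of varying or non-transverse position: the naive dimension formula can fail pointwise, and this is precisely why the smoothness of $\mathfrak L_1\star\mathfrak L_2$ must be hypothesized rather than derived. Under that hypothesis the rank is constant, so it is enough to compute it at a single convenient point or to argue that $\mathfrak L_1\star\mathfrak L_2 \subseteq (\mathfrak L_1\star\mathfrak L_2)^\perp$ together with the semicontinuity of rank forcing equality on a dense open set, hence everywhere by smoothness. Involutivity, by contrast, is a routine bilinear expansion once the spanning-by-split-sections observation is in place.
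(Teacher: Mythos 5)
The paper itself offers no written argument for this lemma (it just points to adapting the Dirac--structure proof in M\u{a}rcu\c{t}'s notes), so your proposal has to stand on its own, and its overall skeleton --- pointwise isotropy, a rank count for maximality, splitting of the Dorfman bracket for involutivity --- is indeed the intended one. Two remarks on the first half. Isotropy is immediate and needs none of your detours: the four terms of $\bla (\Delta,\psi_1+\psi_2),(\Delta',\psi_1'+\psi_2')\bra$ regroup \emph{exactly} into $\bla(\Delta,\psi_1),(\Delta',\psi_1')\bra+\bla(\Delta,\psi_2),(\Delta',\psi_2')\bra$, with no leftover cross terms; your claimed remainder $\langle\psi_2',\Delta\rangle+\langle\psi_1,\Delta'\rangle$ comes from a mis-grouping. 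Moreover your fibered-product rank count is not merely valid ``at a convenient point'' or ``on a dense set'': at \emph{every} point the fiber of $\mathfrak L_1\times_{DL}\mathfrak L_2$ has dimension $2\operatorname{rank}DL-\dim(p_D\mathfrak L_1+p_D\mathfrak L_2)_x$, the kernel of the addition map is $(\mathfrak L_1\cap\mathfrak L_2\cap J^1L)_x=\mathsf{Ann}(p_D\mathfrak L_1+p_D\mathfrak L_2)_x$ (maximal isotropy of the $\mathfrak L_i$), and the two correction terms cancel, giving $\dim(\mathfrak L_1\star\mathfrak L_2)_x=\operatorname{rank}DL$ everywhere. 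So the ``main obstacle'' you describe is not where the difficulty lies: the only thing that can fail, and the only reason smoothness is hypothesized, is the smooth variation of these (constant-dimensional) fibers.

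The genuine gap is in the involutivity step. You assert that sections of the form $(\Delta,\psi_1+\psi_2)$ with $(\Delta,\psi_i)\in\Gamma(\mathfrak L_i)$ span $\Gamma(\mathfrak L_1\star\mathfrak L_2)$ over $C^\infty(M)$, justified only by ``using smoothness, so that locally one can choose such splittings of a frame''. That is precisely what is unavailable at points where $\operatorname{rank}(p_D\mathfrak L_1+p_D\mathfrak L_2)$ jumps: there the fibered product $\mathfrak L_1\times_{DL}\mathfrak L_2$ (the kernel of a bundle map of non-constant rank) has a strictly larger fiber than at nearby points, and a pointwise decomposition $\psi=\psi_1+\psi_2$ need not extend to a smooth one, exactly as a kernel element of a rank-jumping bundle map need not be the value of any smooth kernel section. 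The standard repair, which your computation almost gives you: on the open dense set where this rank is locally constant, frames of $\mathfrak L_1\star\mathfrak L_2$ do lift smoothly through the fibered product, and your bracket splitting shows involutivity there; then observe that $\Upsilon(\alpha,\beta,\gamma)=\bla\blq\alpha,\beta\brq,\gamma\bra$ is $C^\infty(M)$-linear in each argument on sections of an isotropic subbundle (the same tensoriality the paper uses in the proof of Proposition \ref{prop:L_J_z}), hence defines a continuous tensor on the smooth bundle $\mathfrak L_1\star\mathfrak L_2$, which vanishes on a dense set and therefore everywhere; maximal isotropy then turns $\Upsilon\equiv 0$ into $\blq\Gamma(\mathfrak L_1\star\mathfrak L_2),\Gamma(\mathfrak L_1\star\mathfrak L_2)\brq\subset\Gamma(\mathfrak L_1\star\mathfrak L_2)$. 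Your appeal to the Leibniz rule only handles $C^\infty$-combinations of decomposable sections; it does not address fibers that decomposable smooth sections may fail to span, so without the density-plus-tensoriality step (or an equivalent) the involutivity argument is incomplete.
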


\begin{proof} The lemma can be proven, e.g., by adapting the analogous proof for Dirac structures in \cite{M2016} in the obvious way.

\end{proof}

\begin{remark}\label{rem:star_neutral}
Actually, $B$-field transformations are special instances of the construction (\ref{eq:star}). Namely, let $B$ be a closed Atiyah $2$-form. Then 
\[
\mathfrak L_B = \{ (\Delta,\iota_\Delta B)
 :  \Delta\in DL\} \subset \mathbbm D L 
\]
is a Dirac-Jacobi structure, and, for every other Dirac-Jacobi structure $\mathfrak L$, we have
\[
\mathfrak L \star \mathfrak L_B = \mathfrak L^B.
\]
In particular, the graph $DL \subset \mathbbm D L$ of the null Atiyah $2$-form acts as an identity with respect to the product $\star$.
\end{remark}

We are now ready to define a notion of \emph{product of Dirac-Jacobi structures}. So let $L_i \to M_i$ be line bundles equipped with Dirac-Jacobi structures $\mathfrak L_i \subset \mathbbm D L_i$, $i= 1,2$. We assume we have an additional datum, namely a line bundle $L \to M_1 \times M_2$ over the product, together with regular vector bundle maps $P_i : L \to L_i$ covering the canonical projections $p_i : M_1 \times M_2 \to M_i$, $i = 1,2$:
\begin{equation}\label{diag:product}
\begin{array}{c}
\xymatrix{ & L \ar[ld]_-{P_1} \ar[d] \ar[rd]^-{P_2} &  \\
L_1 \ar[d] & M_1 \times M_2  \ar[ld]_-{p_1}  \ar[rd]^-{p_2} & L_2 \ar[d] \\
M_1 & & M_2
}
\end{array}.
\end{equation}
In this situation we can consider back-ward images, $P_1^! \mathfrak L_1, P_2^! \mathfrak L_2 \subset \mathbbm D L$, and they are regular because the $p_i$ are submersions, $i = 1, 2$. Finally consider
\[
P_1^! \mathfrak L_1 \star P_2^! \mathfrak L_2.
\]
If it is regular, it is a Dirac-Jacobi structure on $L$. 

Now notice that, in view of diagram (\ref{diag:product}), $L$ comes with (partial) connections $D_i$, along $\ker p_i$, $i = 1, 2$, and we can define a genuine connection $D^\times$ in $L$, by putting
\[
D^\times_{X_1 + X_2} \lambda = (D_1)_{X_1} \lambda + (D_2)_{X_2}\lambda, \quad X_i \in \ker p_i,\quad  i = 1,2.
\]

\begin{lemma}\label{lem:product}
The following conditions are equivalent:
\begin{enumerate}
\item The connection $D^\times$ is flat.
\item Around every point of $M_1 \times M_2$ there is a nowhere vanishing local section $\lambda \in \Gamma (L)$ such that $\lambda = P_1^\ast \lambda_1 = P_2^\ast \lambda_2$ for some local sections $\lambda_i \in \Gamma (L_i)$, $i = 1,2$.
\item For every $(\bar x_1, \bar x_2) \in M_1 \times M_2$, there are local trivializations $L_i \cong \mathbbm R_{M_i}$, around $\bar x_i$, $i = 1,2$, and $L \cong \mathbbm R_{M_1 \times M_2}$, around $(\bar x_1, \bar x_2)$, such that the $P_i : L \to L_i$ identify with the projections $\mathbbm R_{M_1 \times M_2} \to \mathbbm R_{M_i}$, $(x_1, x_2; r) \mapsto (x_i ; r)$, where $(x_1, x_2) \in M_1 \times M_2$, and $r \in \mathbbm R$.
\end{enumerate}
\end{lemma}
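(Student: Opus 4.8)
The plan is to prove the chain of implications $(3) \Rightarrow (2) \Rightarrow (1) \Rightarrow (3)$, since $(3)$ is the most concrete statement and $(1)$ the most abstract, and the passage from flatness back to local trivializations is the one requiring the most care.

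The implication $(3) \Rightarrow (2)$ is immediate: in the trivializations provided by $(3)$, take $\lambda$ to be the constant section $1 \in \Gamma(\mathbbm R_{M_1 \times M_2})$ and $\lambda_i = 1 \in \Gamma(\mathbbm R_{M_i})$; the compatibility $P_i^\ast \lambda_i = \lambda$ is exactly the statement that $P_i$ is the projection $(x_1, x_2; r) \mapsto (x_i; r)$. For $(2) \Rightarrow (1)$, I would observe that if $\lambda = P_i^\ast \lambda_i$ then, for $X_i \in \ker p_i$, the partial connection $D_i$ satisfies $(D_i)_{X_i} \lambda = (D_i)_{X_i} P_i^\ast \lambda_i = P_i^\ast(\nabla^i_{(p_i)_\ast X_i}\lambda_i) = 0$ because $(p_i)_\ast X_i = 0$ (here $D_i$ is, by construction, the pullback along $P_i$ of the trivial/tautological partial data on $L_i$ in the $\ker p_i$ direction, so it kills pullback sections along $\ker p_i$). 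Hence $D^\times_{X_1 + X_2}\lambda = 0$ for a local frame $\lambda$, so $D^\times$ is flat locally, hence flat. One should double-check that the partial connections $D_i$ coming from diagram (\ref{diag:product}) are indeed characterized by $(D_i)_{X_i}(P_i^\ast \mu) = 0$ for $X_i \in \ker p_i$ and $\mu \in \Gamma(L_i)$; this is essentially the definition of the flat partial connection along the fibers of a pullback bundle.

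The main work is $(1) \Rightarrow (3)$. Since $D^\times$ is flat, around any $(\bar x_1, \bar x_2)$ there is a $D^\times$-parallel nowhere-vanishing local section $\lambda$, which trivializes $L \cong \mathbbm R_{M_1 \times M_2}$. I claim $\lambda$ is simultaneously a pullback from each factor: fix $x_2$; then along $M_1 \times \{x_2\}$ the section $\lambda$ is $D_1$-parallel (since $D^\times$ restricted to $\ker p_2 = TM_1$ directions is $D_1$), so it defines a parallel frame of $L|_{M_1 \times \{x_2\}}$, which via the regular map $P_1$ descends to a frame $\lambda_1$ of $L_1$ near $\bar x_1$; flatness in the $M_1$-direction ensures this $\lambda_1$ is independent of the chosen $x_2$ (parallel transport in the $M_2$-direction carries the $x_2$-slice frame to the $x_2'$-slice frame and commutes, by flatness, with the $M_1$-direction transport), and symmetrically one gets $\lambda_2$. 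Then $\lambda = P_1^\ast \lambda_1 = P_2^\ast \lambda_2$, which is precisely condition $(2)$; and expressing everything in the induced trivializations $L_i \cong \mathbbm R_{M_i}$ (via $\lambda_i$) and $L \cong \mathbbm R_{M_1 \times M_2}$ (via $\lambda$), the maps $P_i$ become the coordinate projections, giving $(3)$.

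The step I expect to be the main obstacle is the well-definedness and smoothness of $\lambda_i$ in $(1) \Rightarrow (3)$ — specifically, checking that the frame obtained by restricting a $D^\times$-parallel $\lambda$ to a slice and pushing down along $P_i$ does not depend on which slice we choose, and that it varies smoothly. This is where flatness of $D^\times$ (as opposed to mere flatness of each $D_i$) is genuinely used: it is exactly the commutativity of parallel transport in the $\ker p_1$ and $\ker p_2$ directions. Everything else is bookkeeping with pullbacks of partial connections along the regular bundle maps $P_i$.
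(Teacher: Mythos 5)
Your proof is correct and follows essentially the same route as the paper, which runs the cycle $(1)\Rightarrow(2)\Rightarrow(3)\Rightarrow(1)$ tersely using exactly your key observation that $D^\times$-parallel sections are precisely the simultaneous pullbacks $P_1^\ast\lambda_1 = P_2^\ast\lambda_2$; your extra care in $(1)\Rightarrow(3)$ is just an expanded version of the paper's one-line ``choose a nowhere vanishing flat section''. One cosmetic remark: in that step the restriction of $D^\times$ to $\ker p_2$ (the $TM_1$-directions) is $D_2$, not $D_1$, and once a $D^\times$-parallel $\lambda$ exists on a product neighbourhood, the independence of the slice used to define $\lambda_1$ follows directly from $D_1$-parallelism along $\ker p_1$, so the commutativity-of-transports argument is not needed.
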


\begin{proof} \quad

(1) $\Longrightarrow$ (2). Choose as $\lambda$ a nowhere vanishing flat section with respect to $D^\times$.

(2) $\Longrightarrow$ (3). Choose the (local) trivializations $L \cong \mathbbm R_{M_1 \times M_2}$, and  $L_i \cong \mathbbm R_{M_i}$, that identify $\lambda$, and $\lambda_i$, with the constant functions $1$, $i = 1,2$.

(3) $\Longrightarrow$ (1). Obvious.

\end{proof}

When one, hence all three, of the conditions in Lemma \ref{lem:product} hold, we say that the product (\ref{diag:product}) is \emph{flat}. If, additionally, $P_1^! \mathfrak L_1 \star P_2^! \mathfrak L_2$ is regular, we call it the (\emph{flat}) \emph{product} of $\mathfrak L_1$ and $\mathfrak L_2$ (with respect to $P_1, P_2$) and denote it by
\[
\mathfrak L_1 \times^! \mathfrak L_2.
\]
We will provide examples of products of Dirac-Jacobi structures later on. For now we only remark that, if we apply an analogous construction to a pair of Dirac structures, we get exactly their standard product.

\begin{remark}
The above discussion applies to complex Dirac-Jacobi structures without modifications.
\end{remark}

\begin{remark}\label{rem:product_B-field}
Let $\mathfrak L_i$ be Dirac Jacobi structures on the line bundles $L_i \to M_i$, $i = 1,2$, and let $\mathfrak L_1 \times^! \mathfrak L_2$ be a flat product of them with respect to some projections $P_1, P_2$ as in diagram (\ref{diag:product}). Finally, let $B$ be a closed Atiyah $2$-form on $L_1$. It is easy to see that
\[
(\mathfrak L_1 \times^! \mathfrak L_2)^{P_1^\ast B} = \mathfrak L_1^B \times^! \mathfrak L_2.
\]
\end{remark}

\begin{remark}
By now, it should be clear that, when working with Dirac-Jacobi structures, we are working in the category of line bundles and regular vector bundle maps between them. So, no surprise that the appropriate notion of \emph{product} in this setting includes a line bundle on a product manifold, and regular vector bundle maps onto the factors. Nonetheless, in what follows, as we are only interested in local properties, we will use Lemma \ref{lem:product}, and we will mainly consider the case when the line bundles $L_i \to M_i$ and $L \to M_1 \times M_2$ are trivial and the projections $P_i : L \to L_i$ are the obvious ones, $i = 1,2$.
\end{remark}

\subsubsection{Homogeneous generalized complex structures}

As already mentioned, unlike Poisson manifolds, manifolds $M$ with a Jacobi bundle $(L \to M, J)$ possess two kinds of characteristic leaves. Odd dimensional ones inherit from $J$ a canonical contact structure, and we call them \emph{contact leaves}. Even dimensional leaves inherit from $J$ an lcs structure, and we call them \emph{lcs leaves}. Let $\mathcal O$ be a leaf and $x_0 \in \mathcal O$. By a \emph{transversal} to $\mathcal O$ at $x_0$, we mean a submanifold $N$ such that $x_0 \in N$, and $T_{x_0} M = T_{x_0} N \oplus T_{x_0} \mathcal O$. It turns out that transversals to lcs leaves, with the restricted line bundle, possess a canonical Jacobi structure around $x_0$. Additionally, this Jacobi structure vanishes at $x_0$. On the other hand, transversals to contact leaves, possess a canonical homogeneous Poisson structure (up to the choice of a nowhere vanishing section of $L$) around $x_0$. The homogeneous Poisson structure vanishes at $x_0$. Recall that a \emph{homogeneous Poisson structure} on a manifold $M$ is a pair $(\pi, Z)$ where $\pi$ is a Poisson bi-vector, and $Z$ is a vector field, called the \emph{homogeneity vector field}, such that $\mathcal L_Z \pi = - \pi$. 

\begin{example}\label{ex:can_hP}
On $\mathbbm R^{2d}$, with coordinates $(x^1, \ldots, x^d, p_1, \ldots, p_d)$, there is a canonical homogeneous Poisson structure $(\pi_{\mathit{can}}, Z_{\mathit{can}})$ given by
\[
\pi_{\mathit{can}} = \frac{\partial}{\partial p_i} \wedge \frac{\partial}{\partial x^i}, \quad \text{and} \quad Z_{\mathit{can}} = p_i \frac{\partial}{\partial p_i}.
\]
\end{example}

The theory of Jacobi structures is strongly related to that of homogeneous Poisson structures, as the example of transversals to contact leaves shows (see also \cite{DLM1991}). In a similar way generalized contact geometry is strongly related to the theory of \emph{homogeneous generalized complex structures} which we define now. Let $M$ be a manifold.

\begin{definition}\label{def:hom_gen_compl}
A \emph{homogeneous generalized complex structure} on $M$ is a pair $(\mathbbm J, \mathbbm Z)$, where 
\[
\mathbbm J =
\left(
\begin{array}{cc}
A & \pi^\sharp \\
\sigma_\flat & -A^\ast
\end{array}
\right) \in \mathrm{End} (\mathbbm T M)
\]
is a generalized complex structure, and $\mathbbm Z = (Z, \zeta)$ is a section of the generalized tangent bundle $\mathbbm TM$ such that
\begin{itemize}
\item[$\triangleright$] $\mathcal L_Z A = \pi^\sharp \circ (d \zeta)_\flat$,
\item[$\triangleright$] $\mathcal L_Z \pi = - \pi$ (in particular $(\pi, Z)$ is a homogeneous Poisson structure),
\item[$\triangleright$] $\mathcal L_Z \sigma = \sigma - \iota_A d \zeta$,
\end{itemize}
where $\iota_A d \zeta$ is the $2$-form defined by 
\[
(\iota_A d \zeta)(X, Y) = d \zeta (A X, Y) + d \zeta (X, A Y),
\]
for all $X, Y \in \mathfrak X (M)$.
\end{definition}

\begin{remark}
The main motivation for this definition is that the transversal to a contact leaf in the base of a generalized contact bundle is a homogeneous generalized complex manifold, as we will show in Section \ref{Sec:Transv_contact}. 

Another motivation is that a generalized contact structure on a line bundle $L \to M$ is equivalent to a homogeneous generalized complex structure on the frame bundle $\widetilde M = L^\ast \smallsetminus 0$ of $L$. In that case, the section $\mathbbm Z$ is of the special form $(\mathcal E, 0)$, where $\mathcal E$ is the Euler vector field on $\widetilde M$ \cite[Remark 3.6 in the arXiv version]{VW2016}.

The second motivation suggests the following problem: Let $(\mathbbm J, 
\mathbbm Z)$ be a homogeneous generalized complex structure on $M$ 
with $\mathbbm Z = (Z, \zeta)$. \emph{Is it possible to find a closed $2$-form
 $B$ on $M$, equivalently, a $B$-field transformation, such that $(\mathbbm
  J^B, (Z, 0))$ is a homogeneous generalized complex structure?} Answering
   this question goes beyond the scopes of the present paper.
\end{remark}

Definition \ref{def:hom_gen_compl} can be rephrased in terms of the complex Dirac structure associated to $\mathbbm J$, i.e.~the $+\mathrm{i}$-eigenbundle $\mathfrak L_{\mathbbm J}$ of $\mathbbm J$ in the complexified generalized tangent bundle $\mathbbm T M \otimes \mathbbm C$. Namely, we have the following

\begin{proposition}\label{prop:hom_gc}
Let 
\[
\mathbbm J = \left(
\begin{array}{cc}
A & \pi^\sharp \\
\sigma_\flat & -A^\ast
\end{array}
\right)
\] be a generalized complex structure on $M$, and let $\mathbbm Z = (Z, \zeta)$ be a section of the generalized tangent bundle $\mathbbm T M$. Then the following conditions are equivalent.
\begin{enumerate}
\item $(\mathbbm J, \mathbbm Z)$ is a homogeneous generalized complex structure;
\item $([Z, X], \mathcal L_Z \eta - \eta + \iota_X d \zeta) \in \Gamma (\mathfrak L_{\mathbbm J})$ for all $(X, \eta) \in \Gamma (\mathfrak L_{\mathbbm J})$;
\item $([Z, X] + X, \mathcal L_Z \eta + \iota_X d \zeta) \in \Gamma (\mathfrak L_{\mathbbm J})$ for all $(X, \eta) \in \Gamma (\mathfrak L_{\mathbbm J})$.
\end{enumerate}
\end{proposition}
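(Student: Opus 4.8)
The plan is to prove the equivalence of (1), (2), and (3) by unwinding the definition of $\mathfrak L_{\mathbbm J}$ as the $+\mathrm i$-eigenbundle and translating the three bulleted conditions in Definition \ref{def:hom_gen_compl} into closure statements for the Dorfman bracket. First I would record the explicit description of sections of $\mathfrak L_{\mathbbm J}$: since $\mathbbm J$ has the matrix form given, a pair $(X, \eta)$ lies in $\mathfrak L_{\mathbbm J}$ precisely when $AX + \pi^\sharp \eta = \mathrm i X$ and $\sigma_\flat X - A^\ast \eta = \mathrm i \eta$. Equivalently, $\mathfrak L_{\mathbbm J}$ is spanned over the $\mathbbm C$-linear combinations by elements of the form $(X, \eta)$ with $\mathbbm J(X,\eta) = \mathrm i (X, \eta)$, and it suffices to check the conditions on such a spanning set (all three conditions are $C^\infty(M;\mathbbm C)$-linear in $(X,\eta)$ modulo $\Gamma(\mathfrak L_{\mathbbm J})$, so testing on a local frame is enough).

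Next I would carry out the equivalence (2) $\Longleftrightarrow$ (3), which is purely formal: if $(X,\eta) \in \Gamma(\mathfrak L_{\mathbbm J})$ then so is $(X, \eta)$ itself, and the map $(X,\eta) \mapsto ([Z,X] + X, \mathcal L_Z \eta + \iota_X d\zeta)$ differs from $(X,\eta)\mapsto ([Z,X], \mathcal L_Z\eta - \eta + \iota_X d\zeta)$ by addition of $(X, \eta)$; since $\mathfrak L_{\mathbbm J}$ is a subbundle (closed under addition of its own sections), one implication follows at once and the reverse is identical. So the real content is (1) $\Longleftrightarrow$ (2). For this I would introduce the derivation-like operator $\Theta$ on sections of $\mathbbm T M \otimes \mathbbm C$ defined by $\Theta(X,\eta) := ([Z,X], \mathcal L_Z \eta - \eta + \iota_X d\zeta)$ and observe that condition (2) says $\Theta$ preserves $\Gamma(\mathfrak L_{\mathbbm J})$. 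Since $\mathbbm D L$-language aside, we have the splitting $\mathbbm T M \otimes \mathbbm C = \mathfrak L_{\mathbbm J} \oplus \overline{\mathfrak L}_{\mathbbm J}$, preservation of $\mathfrak L_{\mathbbm J}$ is equivalent to the vanishing of the "$\overline{\mathfrak L}_{\mathbbm J}$-component" of $\Theta$ on $\mathfrak L_{\mathbbm J}$, which can be repackaged as a commutator condition: $\Theta$ preserves $\mathfrak L_{\mathbbm J}$ iff $[\Theta, \mathbbm J] = 0$ as operators, i.e. $\Theta \circ \mathbbm J = \mathbbm J \circ \Theta$ on sections (here $\Theta$ acts on the endomorphism $\mathbbm J$ by the natural Leibniz-type action $(\Theta \cdot \mathbbm J)(\alpha) = \Theta(\mathbbm J \alpha) - \mathbbm J(\Theta \alpha)$, and one uses that $\mathbbm J^2 = -\mathrm{id}$ together with $\mathfrak L_{\mathbbm J}$ being exactly the $+\mathrm i$-eigenbundle).

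Then I would compute $\Theta \cdot \mathbbm J$ blockwise against the matrix presentation $\mathbbm J = \begin{pmatrix} A & \pi^\sharp \\ \sigma_\flat & -A^\ast \end{pmatrix}$. The action of $\Theta$ on the pieces is: on the vector-field block, $[Z, -]$ contributes a Lie derivative $\mathcal L_Z$ and the "$+X$"-type shift contributes identity terms that, because of the simultaneous shift in the covector slot, organize into exactly the combinations $\mathcal L_Z A - \pi^\sharp \circ (d\zeta)_\flat$, $\mathcal L_Z \pi^\sharp + \pi^\sharp$ (equivalently $\mathcal L_Z \pi = -\pi$), and $\mathcal L_Z \sigma_\flat - \sigma_\flat + (\iota_A d\zeta)_\flat$; the $-A^\ast$ block condition is the transpose of the $A$ block condition and hence automatic. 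Matching these four blocks to zero recovers precisely the three bullets of Definition \ref{def:hom_gen_compl}, completing (1) $\Longleftrightarrow$ (2). The main obstacle I anticipate is purely bookkeeping: correctly tracking how the $\iota_X d\zeta$ and $-\eta$ (resp.\ $+X$) correction terms distribute across the off-diagonal blocks $\pi^\sharp$ and $\sigma_\flat$ when one differentiates the eigenvalue equation, and making sure the sign conventions for $\mathcal L_Z$ acting on $A^\ast$, on the bivector $\pi$ versus its sharp map $\pi^\sharp$, and on the $2$-form $\sigma$ versus $\sigma_\flat$ are all consistent; none of this is conceptually hard, but it is the step where an error is most likely to creep in, so I would do the off-diagonal blocks carefully and use the skew/self-adjointness relations forced by $\mathbbm J$ being skew-symmetric with respect to the canonical pairing to cut the computation roughly in half.
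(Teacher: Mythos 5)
Your proposal is correct, and it reaches the result by a somewhat different route than the paper. The paper proves (1) $\Leftrightarrow$ (3) by a direct computation on eigensections: it applies $\mathbbm J$ to $([Z,X]+X,\ \mathcal L_Z\eta + \iota_X d\zeta)$, uses the three identities of Definition \ref{def:hom_gc}\,--\,rather, of the definition of homogeneous generalized complex structure\,--\,to show the result is again a $+\mathrm i$-eigensection, and leaves the converse as ``the same computation in reverse order''. You instead encode condition (2) as invariance of $\Gamma(\mathfrak L_{\mathbbm J})$ under the operator $\Theta(X,\eta) = ([Z,X],\ \mathcal L_Z\eta - \eta + \iota_X d\zeta)$, which is a derivation of $\mathbbm T M$ with symbol $Z$ (so $[\Theta,\mathbbm J]$ is $C^\infty$-linear), and you reduce both implications at once to the tensorial identity $[\Theta,\mathbbm J]=0$; the blockwise computation comes out exactly as you predict, namely $[\Theta,\mathbbm J]$ has blocks $\mathcal L_Z A - \pi^\sharp\circ(d\zeta)_\flat$, $(\mathcal L_Z\pi)^\sharp + \pi^\sharp$, $(\mathcal L_Z\sigma)_\flat - \sigma_\flat + (\iota_A d\zeta)_\flat$, and minus the adjoint of the first block, so its vanishing is precisely the three bullets of the definition. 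The underlying Lie-derivative bookkeeping is the same as the paper's, but your packaging treats the converse direction (2) $\Rightarrow$ (1) more cleanly, where the paper only gestures at a reverse computation. One step you should make explicit: invariance of $\mathfrak L_{\mathbbm J}$ under $\Theta$ by itself only yields the vanishing of $[\Theta,\mathbbm J]$ on $\mathfrak L_{\mathbbm J}$; to conclude $[\Theta,\mathbbm J]=0$ identically, and hence read off the real tensor identities, note that $\Theta$ and $\mathbbm J$ are real operators (since $Z$ and $\zeta$ are real), so $\Theta$ also preserves $\overline{\mathfrak L}_{\mathbbm J}$ by conjugation and the commutator vanishes on both eigenbundles. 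Your one-line treatment of (2) $\Leftrightarrow$ (3), adding the section $(X,\eta)\in\Gamma(\mathfrak L_{\mathbbm J})$, is exactly what the paper means by ``clear''.
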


\begin{proof}
It is clear that (2) and (3) are equivalent. It remains to prove that (1) $\Leftrightarrow$ (3). Assume first that $(\mathbbm J, \mathbbm Z)$ is a homogeneous generalized complex structure, let $\alpha = (X, \eta) \in \Gamma (\mathfrak L_{\mathbbm J})$ and compute
\begin{equation}\label{eq:comp}
\begin{aligned}
\mathbbm J \left( 
\begin{array}{c}
[Z, X] + X \\
\mathcal L_Z \eta + \iota_X d \zeta
\end{array}
 \right) 
 & = \left(
\begin{array}{cc}
A & \pi^\sharp \\
\sigma_\flat & -A^\ast
\end{array}
\right) \left( 
\begin{array}{c}
[Z, X] + X \\
\mathcal L_Z \eta + \iota_X d \zeta
\end{array}
 \right) \\
 & =
 \left( 
\begin{array}{c}
A[Z, X] + AX  + \pi^\sharp \mathcal L_Z \eta + \pi^\sharp \iota_X d \zeta \\
\sigma_\flat [Z, X] + \sigma_\flat X - A^\ast \mathcal L_Z \eta - A^\ast \iota_X d \zeta
\end{array}
 \right) .
 \end{aligned}
\end{equation}
The first entry is
\[
\begin{aligned}
& A[Z, X] + AX  + \pi^\sharp \mathcal L_Z \eta + \pi^\sharp \iota_X d \zeta \\
& = [Z, AX] - (\mathcal L_Z A)X + AX + [Z, \pi^\sharp \eta] - (\mathcal L_Z \pi)^\sharp \eta + (\pi^\sharp \circ (d \zeta)_\flat) X \\
& = [Z, AX + \pi^\sharp \eta]  + AX + \pi^\sharp \eta \\
& = \mathrm{i} ([Z, X] + X) ,
\end{aligned}
\]
where we used that $AX + \pi^\sharp \eta$ is the first entry of $\mathbbm J \alpha$. Similarly, the second entry in (\ref{eq:comp}) is
\[
\begin{aligned}
& \sigma_\flat [Z, X] + \sigma_\flat X - A^\ast \mathcal L_Z \eta - A^\ast \iota_X d \zeta \\
& = \mathcal L_Z (\sigma_\flat X) - (\mathcal L_Z \sigma)_\flat X + \sigma_\flat X - \mathcal L_Z (A^\ast \eta) + (\mathcal L_Z A)^\ast \eta - (A^\ast \circ (d \zeta)_\flat) X  \\
& = \mathcal L_Z (\sigma_\flat X-A^\ast \eta) + (d \zeta)_\flat (   A X + \pi^\sharp \eta) \\
& = \mathrm{i} (\mathcal L_Z \eta + \iota_X d \zeta),
\end{aligned}
\]
showing that $([Z, X] + X, \mathcal L_Z \eta + \iota_X d \zeta)$ is a $+\mathrm{i}$-eigensection of $\mathbbm J$.

Conversely, let $([Z, X] + X, \mathcal L_Z \eta + \iota_X d \zeta)$ be a $+\mathrm{i}$-eigensection of $\mathbbm J$ for all $
+\mathrm{i}$-eigensections $\alpha = (X, \eta)$. One can show that $(\mathbbm J, \mathbbm Z)$ is a homogeneous generalized 
complex structure with a similar computation as above (but in the reverse order).
\end{proof}

Every homogeneous generalized complex structure $(\mathbbm J, \mathbbm Z)$ determines a complex Dirac-Jacobi structure on the 
trivial line bundle $\mathbbm R_M := M \times \mathbbm R \to M$ according to the following

\begin{proposition}\label{prop:L_J_z}
Let $(\mathbbm J, \mathbbm Z)$ be a homogeneous generalized complex structure on $M$, with $\mathbbm Z = (Z, \zeta)$. In $
\mathbbm D \mathbbm R_M \otimes \mathbbm C$ consider the subbundle $\mathfrak 
L_{(\mathbbm J, \mathbbm Z)}$ spanned over $\mathbbm C$ as follows:
\begin{equation}\label{eq:hgc_DJ}
\mathfrak L_{(\mathbbm J, \mathbbm Z)} := \left\langle \left(1- Z, \zeta + \zeta(Z) \cdot \mathfrak j \right), \left(X, \eta 
+ (\eta (Z) - \zeta (X)) \cdot \mathfrak j \right) : (X, \eta) \in \mathfrak L_{\mathbbm J} \right\rangle
\end{equation}
(where we use the same notations as in Remark \ref{rem:trivial}). Then $\mathfrak L_{(\mathbbm J, \mathbbm Z)}$ is a 
(complex) Dirac-Jacobi structure.
\end{proposition}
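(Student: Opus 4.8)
The plan is to verify the two defining properties of a Dirac--Jacobi structure for $\mathfrak L_{(\mathbbm J, \mathbbm Z)}$ directly, namely that it is maximally isotropic with respect to $\bla -, -\bra$ and that it is involutive with respect to $\blq -,-\brq$. For the first part, I would exploit the decomposition of Atiyah forms on the trivial line bundle $\mathbbm R_M$ recalled in Remark \ref{rem:trivial}, writing a generic section of $\mathfrak L_{(\mathbbm J,\mathbbm Z)}$ in terms of the generators $(1-Z, \zeta + \zeta(Z)\cdot\mathfrak j)$ and $(X, \eta + (\eta(Z) - \zeta(X))\cdot\mathfrak j)$ with $(X,\eta)\in\mathfrak L_{\mathbbm J}$. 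Using the explicit formula for $\bla -,-\bra$, the pairing of two such generators reduces to a combination of the natural pairing of $(X,\eta)$, $(X',\eta')$ in $\mathbbm TM$ (which vanishes since $\mathfrak L_{\mathbbm J}$ is isotropic) together with terms involving $\zeta(X)$, $\eta(Z)$, $\eta'(Z)$, $\zeta(X')$ that are arranged precisely to cancel; this is a short computation. Counting dimensions, $\mathfrak L_{(\mathbbm J,\mathbbm Z)}$ has complex rank $\dim M + 1 = \tfrac12 \operatorname{rank}(\mathbbm D\mathbbm R_M \otimes\mathbbm C)$, so isotropy forces maximal isotropy.

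For involutivity I would use the formula (\ref{eq:Dorf_brack}) for the Dorfman bracket on $\mathbbm D\mathbbm R_M$, again expanded via the Cartan calculus identities of Remark \ref{rem:trivial}. The key point is that the three types of brackets --- generator with generator, generator with ``$(X,\eta)$-type'' section, and two ``$(X,\eta)$-type'' sections --- must each land back in $\mathfrak L_{(\mathbbm J,\mathbbm Z)}$. For the bracket of two sections of the form $(X,\eta + (\eta(Z)-\zeta(X))\cdot\mathfrak j)$ with $(X,\eta),(X',\eta')\in\Gamma(\mathfrak L_{\mathbbm J})$, the Courant bracket of $(X,\eta)$ and $(X',\eta')$ is again in $\Gamma(\mathfrak L_{\mathbbm J})$ (involutivity of $\mathfrak L_{\mathbbm J}$), and one checks that the extra $\mathfrak j$-component produced by $d_D$ and $\mathcal L$ matches the prescribed coefficient $\eta''(Z) - \zeta(X'')$ attached to the resulting bracket $(X'',\eta'')$; this matching is where condition (1), or equivalently conditions (2)--(3) of Proposition \ref{prop:hom_gc}, enters through the interplay of $Z$ and $d\zeta$. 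For the bracket involving the distinguished generator $(1-Z,\zeta+\zeta(Z)\cdot\mathfrak j)$ paired against an $(X,\eta)$-type section, the symbol is $-[Z,X] = -([Z,X]+X) + X$, so the $DL$-component is a combination of $X$ and $[Z,X]+X$; Proposition \ref{prop:hom_gc}(3) tells us exactly that $([Z,X]+X, \mathcal L_Z\eta + \iota_X d\zeta)\in\Gamma(\mathfrak L_{\mathbbm J})$ (modulo the $\mathfrak j$-term), which is what is needed to close the bracket inside $\mathfrak L_{(\mathbbm J,\mathbbm Z)}$.

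The main obstacle, and the part requiring the most care, is the bookkeeping of the $\mathfrak j$-components: every generator has a carefully chosen coefficient in front of $\mathfrak j$ (namely $\zeta(Z)$, resp. $\eta(Z)-\zeta(X)$), and one must verify that after applying the Dorfman bracket --- whose $J^1L$-part involves $\mathcal L_\Delta$ and $d_D$, both of which produce new $\mathfrak j$-terms according to the formulas in Remark \ref{rem:trivial} --- the output still has its $\mathfrak j$-coefficient in exactly the prescribed form relative to its $TM\oplus T^\ast M$ part. Concretely one repeatedly uses identities like $\mathcal L_X(\zeta) \cdot(\text{something}) $ versus $\iota_X d\zeta$ and $d(\zeta(Z))$, and the homogeneity relations $\mathcal L_Z\pi = -\pi$, $\mathcal L_Z A = \pi^\sharp\circ(d\zeta)_\flat$, $\mathcal L_Z\sigma = \sigma - \iota_A d\zeta$ (encoded cleanly in Proposition \ref{prop:hom_gc}) are precisely the relations that make these cancellations work. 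Once the $\mathfrak j$-bookkeeping is organized --- for instance by first computing brackets in $\mathbbm T M$ and then separately tracking the scalar ``homogeneity'' components --- the verification is routine but lengthy, so I would present the computation for the generator-vs-generator and generator-vs-$(X,\eta)$ cases in detail and indicate that the $(X,\eta)$-vs-$(X',\eta')$ case is entirely analogous.
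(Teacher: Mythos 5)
Your proposal is correct, and its first half (isotropy checked directly on the generators, then the rank count $\dim M+1$ forcing maximal isotropy) is exactly the paper's argument. For involutivity, however, you take a more computational route than the paper: you propose to verify closure of the Dorfman bracket on generators directly, tracking the $\mathfrak j$-coefficients via the formulas of Remark \ref{rem:trivial} and using Proposition \ref{prop:hom_gc} to see that the resulting $\mathbbm T M$-parts lie in $\mathfrak L_{\mathbbm J}$ with exactly the prescribed coefficient $\eta(Z)-\zeta(X)$ in front of $\mathfrak j$. The paper instead exploits maximal isotropy from the outset and shows that the trilinear form $\Upsilon(\alpha,\beta,\gamma)=\bla \alpha, \blq \beta,\gamma\brq\bra$ vanishes on generators: since $\Upsilon$ is $C^\infty$-linear in each slot on a maximal isotropic subbundle, involutivity reduces to two scalar computations, $\Upsilon(\beta_1,\beta_2,\alpha)$, which vanishes by condition (2) of Proposition \ref{prop:hom_gc}, and $\Upsilon(\beta_1,\beta_2,\beta_3)$, which vanishes using only isotropy and involutivity of $\mathfrak L_{\mathbbm J}$. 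What the $\Upsilon$-trick buys is that one never has to produce the brackets explicitly nor match $\mathfrak j$-coefficients (note that in your direct check even the bracket of two $(X,\eta)$-type sections needs the homogeneity relations, not just involutivity of $\mathfrak L_{\mathbbm J}$), and its tensoriality makes the reduction to generators automatic; in your approach you should at least remark that closure on a spanning set of sections suffices because the Leibniz-type anomaly of the Dorfman bracket is controlled by the pairing, which vanishes by isotropy --- a routine point, but it is the hidden step that legitimizes computing only with generators. Conversely, your route produces the explicit brackets, which the paper's argument never exhibits.
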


\begin{proof}
A direct computation with the generators shows that $\mathfrak L_{(\mathbbm J, \mathbbm Z)}$ is isotropic. As its rank is $\dim M +1$, it is also maximal isotropic. For the involutivity, we will show that the trilinear form
\[
\Upsilon : \wedge^3 \mathfrak L_{(\mathbbm J, \mathbbm Z)} \to \mathbbm R_M, \quad (\alpha, \beta, \gamma) \mapsto \bla \alpha, \blq \beta, \gamma \brq \bra
\]
vanishes on generators. To do this, first denote by $\bla -,- \bra_{\mathbbm T M}$ and $\blq -, - \brq_{\mathbbm T M}$ the bilinear form and the Dorfman bracket in the generalized tangent bundle, and notice that, for all
\[
(X_i + f_i, \eta_i + g_i \cdot \mathfrak j) \in \Gamma (\mathbbm D \mathbbm R_M\otimes \mathbbm C),
\]
with $X_i \in \mathfrak X(M)$, $\eta_i \in \Omega^1 (M)$, and $f_i, g_i \in C^\infty (M)$,  $i = 1, 2$, we have:
\begin{equation}\label{eq:bilin}
\bla (X_1 + f_1, \eta_1 + g_1 \cdot \mathfrak j), (X_2 + f_2, \eta_2 + g_2 \cdot \mathfrak j) \bra = \bla (X_1, \eta_1), (X_2, \eta_2) \bra_{\mathbbm T M} + f_1 g_2 + f_2 g_1,
\end{equation}
and
\begin{equation}\label{eq:Dorfman}
\begin{aligned}
 & \blq (X_1 + f_1, \eta_1 + g_1 \cdot \mathfrak j), (X_2 + f_2, \eta_2 + g_2 \cdot \mathfrak j) \brq =  \blq (X_1, \eta_1), (X_2, \eta_2) \brq_{\mathbbm T M} \\
& \quad + \left(X_1 (f_1) - X_2 (f_1), g_2 df_1 + g_1 df_2 + (X_1 (g_2) - X_2 (g_1) + g_2 f_1 + \eta_1 (X_2))\cdot \mathfrak j \right)
\end{aligned}
\end{equation}
 Now, let $\alpha = (1-Z, \zeta + \zeta (Z) \cdot \mathfrak j)$ and for $(X_i, \eta_i) \in \Gamma (\mathfrak L_{\mathbbm J})$, let 
 \[
 \beta_i = \left(X_i, \eta_i + (\eta_i (Z)- \zeta (X_i))\cdot \mathfrak j\right)
 \]
be the corresponding generator of $\Gamma (\mathfrak L_{(\mathbbm J, \mathbbm Z)})$, $i = 1, 2, 3$. A straightforward computation exploiting (\ref{eq:bilin}) and (\ref{eq:Dorfman}) shows that
 \[
\Upsilon (\beta_1, \beta_2, \alpha) = \bla (X_1, \eta_1), ([Z, X_2], \mathcal L_Z \eta_2 - \eta_2 + \iota_{X_2} d \zeta)\bra_{\mathbbm T M}
 \]
and the right hand side vanishes in view of Proposition \ref{prop:hom_gc}. Finally, again from (\ref{eq:bilin}) and (\ref{eq:Dorfman}) we get
\[
\Upsilon (\beta_1, \beta_2, \beta_3) = \bla (X_1, \eta_1), \blq (X_2, \eta_2), (X_3, \beta_3) \brq \bra_{\mathbbm T M} = 0
\]
and this concludes the proof.
\end{proof}

Complex Dirac-Jacobi structures on $\mathbbm R_M$, of the form $\mathfrak L_{(\mathbbm J, \mathbbm Z)}$ for some homogeneous generalized complex structure $(\mathbbm J, \mathbbm Z)$, can be characterized as follows. First of all, denote by $p_{\mathbbm R} : D\mathbbm R_M = TM \oplus \mathbbm R_M \to \mathbbm R_M$ the natural projection.

\begin{proposition}\label{prop:hom_gen_compl}
A complex Dirac-Jacobi structure $\mathfrak L \subset \mathbbm D \mathbbm R_M\otimes \mathbbm C$ is of the form $\mathfrak L_{(\mathbbm J, \mathbbm Z)}$ for some homogeneous generalized complex structure $(\mathbbm J, \mathbbm Z)$ if and only if it satisfies the following conditions
\begin{enumerate}
\item $\operatorname{rank}_{\mathbbm C} (\mathfrak L \cap \overline{\mathfrak L}) = 1$,
\item $p_D \mathfrak L + p_D \overline{\mathfrak L} = D \mathbbm R_M \otimes \mathbbm C$,
\item $p_{\mathbbm R} \circ p_D : \mathfrak L \cap \overline{\mathfrak L} \to M \times \mathbbm C$ is surjective (hence an isomorphism).
\end{enumerate}
\end{proposition}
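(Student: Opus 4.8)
The plan is to prove the two implications separately, computing in both cases with the explicit generators of $\mathfrak L_{(\mathbbm J,\mathbbm Z)}$ in (\ref{eq:hgc_DJ}) and with the Cartan calculus on the trivial line bundle recalled in Remark~\ref{rem:trivial}; this refines Proposition~\ref{prop:L_J_z}, which already supplies the ``only if'' existence half. For the ``only if'' direction, let $\mathfrak L = \mathfrak L_{(\mathbbm J,\mathbbm Z)}$ with $\mathbbm Z = (Z,\zeta)$. The distinguished generator $e := (1-Z,\ \zeta + \zeta(Z)\cdot\mathfrak j)$ is a real section of $\mathfrak L$, hence $\langle e\rangle_{\mathbbm C}\subseteq\mathfrak L\cap\overline{\mathfrak L}$. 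Conversely, an element of $\mathfrak L\cap\overline{\mathfrak L}$ can be written both as $\lambda e + \Theta(X,\eta)$ with $(X,\eta)\in\mathfrak L_{\mathbbm J}$ and as $\lambda' e + \Theta(X',\eta')$ with $(X',\eta')\in\overline{\mathfrak L_{\mathbbm J}}$ (notation $\Theta$ as below); comparing the two expressions and using $\mathfrak L_{\mathbbm J}\cap\overline{\mathfrak L_{\mathbbm J}}=0$ shows it lies in $\langle e\rangle_{\mathbbm C}$, so $\mathfrak L\cap\overline{\mathfrak L}=\langle e\rangle_{\mathbbm C}$ and (1) holds. Since the scalar part of $p_D e = 1-Z$ is the nowhere vanishing constant $1$, the map $p_{\mathbbm R}\circ p_D$ restricts to an isomorphism $\langle e\rangle_{\mathbbm C}\to M\times\mathbbm C$, which is (3); and $p_D\mathfrak L + p_D\overline{\mathfrak L}$ contains $1-Z$ together with $p_{TM}\mathfrak L_{\mathbbm J}+p_{TM}\overline{\mathfrak L_{\mathbbm J}}=TM\otimes\mathbbm C$ (the last equality because $\mathbbm T M\otimes\mathbbm C=\mathfrak L_{\mathbbm J}\oplus\overline{\mathfrak L_{\mathbbm J}}$), and these span $D\mathbbm R_M\otimes\mathbbm C$, which is (2).

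For the ``if'' direction assume (1)--(3). By (1), $\mathfrak L\cap\overline{\mathfrak L}$ has complex rank $1$, and being conjugation invariant it is the complexification of a real line subbundle $\mathbbm L\subset\mathbbm D\mathbbm R_M$; by (3) there is a unique $e\in\Gamma(\mathbbm L)$ with $p_{\mathbbm R}(p_D e)=1$, which we write as $e = (1-Z,\ \zeta + g_0\cdot\mathfrak j)$ for a vector field $Z$, a $1$-form $\zeta$ and a function $g_0$. Isotropy $\bla e,e\bra = 0$, expanded through (\ref{eq:bilin}), forces $g_0=\zeta(Z)$, so $e$ has exactly the shape of the first generator in (\ref{eq:hgc_DJ}); set $\mathbbm Z := (Z,\zeta)$. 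Introduce the tensorial, injective bundle map
\[
\Theta : \mathbbm T M\otimes\mathbbm C \longrightarrow \mathbbm D\mathbbm R_M\otimes\mathbbm C,\qquad \Theta(X,\eta) := \bigl(X,\ \eta + (\eta(Z)-\zeta(X))\cdot\mathfrak j\bigr),
\]
whose image is exactly the subbundle of elements whose derivation has vanishing scalar part, and put $\mathfrak L_{\mathbbm J}:=\{(X,\eta):\Theta(X,\eta)\in\mathfrak L\}$. The central step is the splitting $\mathfrak L = \langle e\rangle\oplus\Theta(\mathfrak L_{\mathbbm J})$: if $v\in\mathfrak L$ and $f$ is the scalar part of $p_D v$, then $v-fe$ has vanishing scalar part, and $\bla v-fe,e\bra = 0$, expanded via (\ref{eq:bilin}), pins down its $\mathfrak j$-coefficient to be the one prescribed by $\Theta$, so $v-fe\in\Theta(\mathfrak L_{\mathbbm J})$; the sum is direct because $\langle e\rangle$ and $\operatorname{im}\Theta$ have complementary scalar parts, and since $\mathfrak L + \operatorname{im}\Theta = \mathbbm D\mathbbm R_M\otimes\mathbbm C$ a constant-rank argument shows $\mathfrak L_{\mathbbm J}$ is a smooth subbundle of complex rank $\dim M$.

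It then remains to identify $\mathfrak L_{\mathbbm J}$ with the $+\mathrm i$-eigenbundle of a generalized complex structure $\mathbbm J$ on $M$ and to check that $(\mathbbm J,\mathbbm Z)$ is homogeneous, for then comparing the splitting with (\ref{eq:hgc_DJ}) gives $\mathfrak L = \mathfrak L_{(\mathbbm J,\mathbbm Z)}$. The restriction of $\bla -,-\bra$ to $\operatorname{im}\Theta$ reproduces $\bla -,-\bra_{\mathbbm T M}$ (the scalar-part terms in (\ref{eq:bilin}) drop out), so $\mathfrak L_{\mathbbm J}$ is isotropic, hence maximally isotropic by the rank count; and $\Theta$ commutes with conjugation (as $Z,\zeta$ are real), so $\Theta(\mathfrak L_{\mathbbm J}\cap\overline{\mathfrak L_{\mathbbm J}})\subseteq\mathfrak L\cap\overline{\mathfrak L}=\langle e\rangle$, which forces $\mathfrak L_{\mathbbm J}\cap\overline{\mathfrak L_{\mathbbm J}}=0$ because $e$ has non-zero scalar part. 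For involutivity and homogeneity I would push the Dorfman bracket through (\ref{eq:Dorfman}) (equivalently (\ref{eq:Dorf_brack}) and Remark~\ref{rem:trivial}): for $(X_i,\eta_i)\in\Gamma(\mathfrak L_{\mathbbm J})$ the bracket $\blq \Theta(X_1,\eta_1),\Theta(X_2,\eta_2)\brq$ again has vanishing scalar part, hence lies in $\Theta(\mathfrak L_{\mathbbm J})$, and its $\mathbbm T M$-component is $\blq(X_1,\eta_1),(X_2,\eta_2)\brq_{\mathbbm T M}$, proving $\mathfrak L_{\mathbbm J}$ involutive; likewise $\blq e,\Theta(X,\eta)\brq$ has vanishing scalar part, hence lies in $\Theta(\mathfrak L_{\mathbbm J})$, and a direct computation identifies its $\mathbbm T M$-component with $-\bigl([Z,X],\ \mathcal L_Z\eta - \eta + \iota_X d\zeta\bigr)$, so that $([Z,X],\ \mathcal L_Z\eta - \eta + \iota_X d\zeta)\in\Gamma(\mathfrak L_{\mathbbm J})$ for all $(X,\eta)\in\Gamma(\mathfrak L_{\mathbbm J})$; by Proposition~\ref{prop:hom_gc}(2) this says $(\mathbbm J,\mathbbm Z)$ is homogeneous.

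The step I expect to be the main obstacle is the computational bookkeeping of this last paragraph: one must run the Cartan calculus of Remark~\ref{rem:trivial} through the Dorfman bracket precisely enough to verify (i) that all brackets in play keep vanishing scalar part --- the mechanism that makes the decomposition $\mathfrak L=\langle e\rangle\oplus\Theta(\mathfrak L_{\mathbbm J})$ simultaneously compatible with $\bla -,-\bra$ and with $\blq -,-\brq$ --- and (ii) that the leftover $\mathfrak j$- and $T^*M$-terms produced by $\blq e,\Theta(X,\eta)\brq$ assemble into exactly the homogeneity relation of Proposition~\ref{prop:hom_gc} rather than into a cosmetically different one. The conceptual content --- that passing from $\mathfrak L$ to $(\mathbbm J,\mathbbm Z)$ is nothing but dividing out the canonical real section $e$ and reading off what remains on $\mathbbm T M$ --- is already forced by (1) and (3); condition (2) is the infinitesimal shadow of $p_{TM}\mathfrak L_{\mathbbm J}+p_{TM}\overline{\mathfrak L_{\mathbbm J}}=TM\otimes\mathbbm C$ and is used (and a posteriori recovered) along the way.
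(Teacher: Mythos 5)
Your proof is correct and follows essentially the same route as the paper's: both directions use the real generator $(1-Z,\zeta+\zeta(Z)\cdot\mathfrak j)$ supplied by (1) and (3), the candidate Dirac structure $\mathfrak L_{\mathbbm J}$ is defined by exactly the same formula as the paper's $\mathfrak L_{\mathbbm T}$, and integrability/homogeneity are read off from the Dorfman bracket and Proposition \ref{prop:hom_gc}; your splitting $\mathfrak L=\langle e\rangle\oplus\Theta(\mathfrak L_{\mathbbm J})$ is just a repackaging of the paper's identity $\mathfrak L_{\mathbbm T}=(p_{\mathbbm T}\circ F)\mathfrak L$. One side remark should be fixed: $\operatorname{im}\Theta$ is a corank-one subbundle of the elements with vanishing scalar part, and $\mathfrak L+\operatorname{im}\Theta$ has rank $2\dim M+1$, not all of $\mathbbm D\mathbbm R_M\otimes\mathbbm C$; this does not harm the argument, since your splitting already shows that $\Theta^{-1}(\mathfrak L)$ has pointwise constant rank $\dim M$, which is what yields smoothness of $\mathfrak L_{\mathbbm J}$.
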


\begin{proof}
Begin with a homogeneous generalized complex structure $(\mathbbm J, \mathbbm Z)$, $\mathbbm Z = (Z, \zeta)$, and the associated complex Dirac-Jacobi structure $\mathfrak L = \mathfrak L_{(\mathbbm J, \mathbbm Z)}$ as in (\ref{eq:hgc_DJ}). It is easy to see that $\mathfrak L\cap \overline{\mathfrak L}$ is spanned by $(1-Z, \zeta + \zeta (Z) \cdot \mathfrak j)$, in particular $\mathfrak L$ satisfies property (1) in the statement. For property (2) notice that $p_D \mathfrak L + p_D \overline{\mathfrak L}$ is spanned by $1- Z$ and
\[
p_T \mathfrak L_{\mathbbm J} + p_T \overline{\mathfrak L}_{\mathbbm J} = p_T (\mathfrak L_{\mathbbm J} + \overline{\mathfrak L}_{\mathbbm J})
= TM \otimes \mathbbm C,
\]
where we denoted by $p_T : \mathbbm TM \to TM$ the projection. So $\mathfrak L$ satisfies also (2). Property (3) now follows from the fact that 
\[
p_{\mathbbm R} (1 - Z) = 1 \neq 0.
\]
This concludes the ``only if'' part of the proof.

For the ``if'' part, let $\mathfrak L \subset \mathbbm D \mathbbm R_M\otimes \mathbbm C$ be a complex Dirac-Jacobi structure satisfying properties (1)-(3) in the statement. It follows from (1) and (3) that there exists a unique, necessarily real, section $\alpha$ of $\mathfrak L \cap \overline{\mathfrak L}$ such that $(p_{\mathbbm R} \circ p_D) \alpha = 1$. In particular, $\alpha$ is of the form $(1 - Z, \zeta + g \cdot \mathfrak j)$, for a real vector field $Z$, a real $1$-form $\zeta$, and a real function $g$. From isotropy, $g = \zeta (Z)$, so
\[
\alpha = (1 - Z, \zeta + \zeta (Z) \cdot \mathfrak j), \quad Z \in \mathfrak X(M), \quad \zeta \in \Omega^1 (M).
\]
We put $\mathbbm Z := (Z, \zeta)$. Next we want to construct a generalized complex structure $\mathbbm J : \mathbbm TM \to \mathbbm TM$. To do this, we first define
\[
\mathfrak L_{\mathbbm T} := \left\{(X, \eta) \in \mathbbm T M\otimes \mathbbm C : (X, \eta + (\eta (Z) - \zeta (X)) \cdot \mathfrak j) \in \mathfrak L \right\}.
\]
We claim that $\mathfrak L_{\mathbbm T}$ is a complex Dirac structure such that $\mathfrak L_{\mathbbm T} \cap \overline{\mathfrak L}_{\mathbbm T} = 0$. From (\ref{eq:bilin}), $\mathfrak L_{\mathbbm T}$ is (pointwise) maximal isotropic. So it is a regular vector subbundle provided only it is the image of a vector bundle map. Our next aim is constructing such a map. First of all, consider the endomorphism
\[
F : \mathbbm D \mathbbm R_M \otimes \mathbbm C\to \mathbbm D \mathbbm R_M\otimes \mathbbm C, \quad (X + f, \eta + g \cdot \mathfrak j) \mapsto (X + fZ + f, \eta -f \zeta + g \cdot \mathfrak j) ,
\]
and the natural projection 
\[
p_{\mathbbm T} : \mathbbm D \mathbbm R_M\otimes \mathbbm C \to \mathbbm T M \otimes \mathbbm C, \quad (X + f, \eta + g \cdot \mathfrak j) \mapsto (X, \eta).
\]
We want to show that
\[
\mathfrak L_{\mathbbm T} = (p_{\mathbbm T} \circ F) \mathfrak L.
\]
As $F$ fixes elements of the form $(X, \eta + g \cdot \mathfrak j)$, it is clear that $\mathfrak L_{\mathbbm T} \subset (p_{\mathbbm T} \circ F) \mathfrak L$. In order to check the reverse inclusion, begin with $\beta = (X + f, \eta + g \cdot \mathfrak j) \in \mathfrak L$. It follows from isotropy that $g = \eta(Z) - \zeta (X) - f \zeta (Z)$. Now compute
\[
(\tilde X, \tilde \eta) := (p_{\mathbbm T} \circ F) \beta = (X + fZ, \eta - f \zeta).
\]
But $(\tilde X, \tilde \eta) \in \mathfrak L_{\mathbbm T}$, indeed
\[
\begin{aligned}
& (\tilde X, \tilde \eta + (\tilde \eta (Z) - \zeta(\tilde X)) \cdot \mathfrak j)\\
& = (X + fZ, \eta - fZ + (\eta(Z)  - \zeta (X) - 2f\zeta(Z)) \cdot \mathfrak j) \\
& = \beta - f \alpha
\end{aligned}
\]
which belongs to $\mathfrak L$. So $\mathfrak L_{\mathbbm T}$ is a regular maximal isotropic subbundle of $\mathbbm T M \otimes \mathbbm C$. Involutivity follows from (\ref{eq:Dorfman}) and the involutivity of $\mathfrak L$. Next we check $\mathfrak L_{\mathbbm T} \cap \overline{\mathfrak L}_{\mathbbm T} = 0$. So let $(X, \eta) \in \mathfrak L_{\mathbbm T} \cap \overline{\mathfrak L}_{\mathbbm T}$. This means that
\[
(X, \eta + (\eta(Z) - \zeta (X)) \cdot \mathfrak j) \in \mathfrak L \cap \overline{\mathfrak L}.
\]
As $p_{\mathbbm R} X = 0$ this can only be if $(X, \eta) = 0$. We conclude that $\mathfrak L_{\mathbbm T} $ is the $+\mathrm{i}$-eigenbundle of a generalized complex structure $\mathbbm J$ on $M$. Using (\ref{eq:Dorfman})
again, and Proposition \ref{prop:hom_gc}, it is easy to see that $(\mathbbm J, \mathbbm Z)$ is a homogeneous generalized complex structure in a similar way as in the proof of Proposition \ref{prop:L_J_z}. Finally, it is obvious that $\mathfrak L_{(\mathbbm J, \mathbbm Z)} \subset \mathfrak L$. As they are both maximal isotropic, they actually coincide. This concludes the proof.
\end{proof}

Notice that conditions (1) and (2) in Proposition \ref{prop:hom_gen_compl} make sense for every complex Dirac-Jacobi structure. So we give the following

\begin{definition}
A complex Dirac-Jacobi structure $\mathfrak L \subset \mathbbm D L\otimes \mathbbm C$ on a line bundle $L \to M$ is of \emph{homogeneous generalized complex type} if
\begin{enumerate}
\item $\operatorname{rank}_{\mathbbm C} (\mathfrak L \cap \overline{\mathfrak L}) = 1$,
\item $p_D \mathfrak L + p_D \overline{\mathfrak L} = D L\otimes \mathbbm C$.
\end{enumerate}
\end{definition}

The above definition is motivated by the following

\begin{proposition}
Let  $\mathfrak L \subset \mathbbm D L\otimes \mathbbm C$ be a complex Dirac-Jacobi structure of homogeneous generalized complex type on a line bundle $L \to M$. Then, locally, around every point of $M$, there exists a trivialization $L \cong \mathbbm R_M$ identifying $\mathfrak L$ with the complex Dirac-Jacobi structure $\mathfrak L_{(\mathbbm J, \mathbbm Z)} \subset \mathbbm D \mathbbm R_M\otimes \mathbbm C$ induced by a homogeneous generalized complex structure $(\mathbbm J, \mathbbm Z)$.
\end{proposition}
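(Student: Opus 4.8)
The plan is to reduce the statement to Proposition~\ref{prop:hom_gen_compl} by producing, locally, a trivialization of $L$ under which $\mathfrak{L}$ becomes a complex Dirac-Jacobi structure satisfying conditions (1)--(3) of that proposition. Conditions (1) and (2) of ``homogeneous generalized complex type'' are exactly conditions (1) and (2) of Proposition~\ref{prop:hom_gen_compl}, and they are invariant under vector bundle isomorphisms $\mathbbm{D}\Phi$; so the only point is to arrange condition (3), namely that $p_{\mathbbm R}\circ p_D$ restricted to the rank-one real bundle $\mathfrak{L}\cap\overline{\mathfrak{L}}$ is surjective onto $M\times\mathbbm{C}$ (equivalently, nowhere zero on the real locus). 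The hard part is therefore showing that, after a suitable choice of local trivialization, the projection to the $\mathfrak j$-component of the (real) line $\mathfrak{L}\cap\overline{\mathfrak{L}}$ never vanishes.

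First I would work locally and fix an arbitrary trivialization $L\cong\mathbbm R_M$, so that $\mathbbm{D}L\cong D\mathbbm R_M = TM\oplus\mathbbm R_M$ and we may use the decomposition of Remark~\ref{rem:trivial}. By condition (1), near any point the bundle $\mathfrak K:=\mathfrak{L}\cap\overline{\mathfrak{L}}$ is a real line bundle; pick a nowhere-vanishing local real section, which we can write as $\alpha = (X + f,\ \eta + g\cdot\mathfrak j)$ with $X\in\mathfrak X(M)$, $f\in C^\infty(M)$, $\eta\in\Omega^1(M)$, $g\in C^\infty(M)$ real. Isotropy of $\alpha$ with respect to $\bla-,-\bra$ forces the relation $fg + \tfrac12(\text{pairing of }(X,\eta)\text{ with itself}) = 0$, i.e. a constraint tying $g$ to $f$ and the $\mathbbm{T}M$-part; I would record that computation precisely. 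The claim to prove is that one can choose the trivialization so that $p_{\mathbbm R}(p_D\alpha) = f$ is nowhere zero. The main obstacle is exactly ruling out $f\equiv 0$ on a neighborhood: I would argue that if $f$ vanished identically near $x_0$ for every choice of generator, then $p_D\mathfrak{K}\subset TM\otimes\mathbbm C$, and combined with condition (2) ($p_D\mathfrak{L}+p_D\overline{\mathfrak L} = D\mathbbm R_M\otimes\mathbbm C$) one derives a contradiction with a dimension/rank count (in the line-bundle setting, unlike the generalized-complex case, the extra $\mathbbm 1$ direction must be hit). The cleanest route is: conditions (1)+(2) plus involutivity imply, via Lemma~\ref{lem:p_DLcap} and the structure of $\mathfrak{L}$, that $p_{\mathbbm R}\circ p_D$ is already surjective on $\mathfrak K$ in {\it any} trivialization, so condition (3) is automatic — this is really the content of the ``only if'' half of Proposition~\ref{prop:hom_gen_compl} read backwards.

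Once $f$ is nowhere zero, I would perform the change of trivialization $\lambda\mapsto \mu:=f^{-1}\lambda$ (or rather rescale so that the $p_{\mathbbm R}$-component of the generator becomes the constant $1$); concretely, multiplying the generator $\alpha$ by $f^{-1}$ and absorbing $f$ into the trivialization replaces $\alpha$ by $(1 + Y,\ \theta + h\cdot\mathfrak j)$ for new data, and a further sign normalization $X := -Y$ puts it in the form $(1 - Z,\ \zeta + g\cdot\mathfrak j)$. Isotropy then pins $g = \zeta(Z)$ exactly as at the start of the proof of Proposition~\ref{prop:hom_gen_compl}, so in this trivialization $\mathfrak{L}$ satisfies conditions (1)--(3) of that proposition verbatim.

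Finally I would invoke Proposition~\ref{prop:hom_gen_compl}: it yields a homogeneous generalized complex structure $(\mathbbm J,\mathbbm Z)$ with $\mathbbm Z = (Z,\zeta)$ and an identification $\mathfrak{L} = \mathfrak{L}_{(\mathbbm J,\mathbbm Z)}$ in the chosen trivialization, which is precisely the assertion. I expect the only genuinely non-formal step to be the argument that $p_{\mathbbm R}\circ p_D$ is nonzero on $\mathfrak{L}\cap\overline{\mathfrak L}$; everything else is bookkeeping with the isotropy relation and the splitting of Remark~\ref{rem:trivial}, and the invariance of conditions (1)--(2) under $\mathbbm{D}\Phi$. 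If a direct rank count is awkward, an alternative is to note that $p_D\mathfrak{L}\cap p_D\overline{\mathfrak L} = \operatorname{im}J^\sharp\otimes\mathbbm C$ by Lemma~\ref{lem:p_DLcap} does not contain $\mathbbm 1$ in general, but $p_D(\mathfrak{L}\cap\overline{\mathfrak L})$ must project onto a complement direction forced by condition (2), which supplies the required non-vanishing of $f$.
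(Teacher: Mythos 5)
Your overall strategy (reduce to Proposition \ref{prop:hom_gen_compl} by finding a local trivialization in which condition (3) holds) is the same as the paper's, but the step you yourself identify as the only non-formal one is handled incorrectly. Your ``cleanest route'' -- that conditions (1)+(2) plus involutivity make $p_{\mathbbm R}\circ p_D$ surjective on $\mathfrak L\cap\overline{\mathfrak L}$ in \emph{any} trivialization -- is false: the $\mathbbm R$-component of the generator depends on the trivialization. Concretely, start from $\mathfrak L_{(\mathbbm J,\mathbbm Z)}$ with $Z$ nowhere vanishing, whose generator of $\mathfrak L\cap\overline{\mathfrak L}$ is $(1-Z,\dots)$; a gauge change $\lambda\mapsto a\lambda$ turns the derivation $1-Z$ into one with $\mathbbm R$-component $1-Z(\log|a|)$, which can be made identically zero (e.g.\ $Z=\partial/\partial x$, $a=e^{x}$), while conditions (1) and (2) are untouched. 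For the same reason your fallback rank-count argument fails: $p_D(\mathfrak L\cap\overline{\mathfrak L})\subset TM\otimes\mathbbm C$ is perfectly compatible with (1) and (2), so no contradiction arises from $f\equiv 0$. Your appeal to Lemma \ref{lem:p_DLcap} is also a misapplication, since that lemma concerns generalized contact type structures, where $\mathfrak L\cap\overline{\mathfrak L}=0$.

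What the proof actually needs, and what the paper supplies, is different: (i) the \emph{derivation} part $\Delta$ of a real generator $(\Delta,\psi)$ of $\mathfrak L\cap\overline{\mathfrak L}$ is nowhere zero -- if $\Delta_x=0$ then $0\neq\psi_x\in\mathfrak L\cap J^1L\subset\mathsf{Ann}(p_D\mathfrak L)\cap\mathsf{Ann}(p_D\overline{\mathfrak L})=\mathsf{Ann}(p_D\mathfrak L+p_D\overline{\mathfrak L})=\mathsf{Ann}(DL\otimes\mathbbm C)=0$ by maximal isotropy and condition (2), a contradiction; and (ii) for a nowhere-vanishing derivation one can always \emph{choose} a local trivialization putting it in the form $f(1-Z)$ with $f$ nowhere zero: in a given trivialization $\Delta=X+g$, and either the symbol $X$ is nonzero at the point, in which case one solves $X(\log a)=1-g$ locally and rescales by $a$, or $X$ vanishes there, in which case $g$ is already nonzero nearby because $\Delta\neq 0$. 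Only after this choice does condition (3) hold and Proposition \ref{prop:hom_gen_compl} apply. (A smaller slip: once $f$ is nowhere zero you do not need any further change of trivialization -- rescaling the generator $\alpha$ by $f^{-1}$ inside the line $\mathfrak L\cap\overline{\mathfrak L}$ already normalizes it, and ``absorbing $f$ into the trivialization'' would in fact change the component by a derivative term rather than by division.)
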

\begin{proof}\label{prop:loc_hgc_type}
Let $\mathfrak L$ be as in the statement, and let $x_0 \in M$. Choose a nowhere vanishing local section $\alpha = (\Delta, \psi) \in \Gamma (\mathfrak L \cap \overline{\mathfrak L})$ around $x_0$. We can choose $(\Delta, \psi)$ to be real. Then we have $\Delta \neq 0$. Indeed, if $\Delta_{x} = 0$ for some $x$, then 
\[
0 \neq \psi_{x} \in \mathfrak L \cap J^1 L \subset \mathsf{Ann} (p_D \mathfrak L) \cap \mathsf{Ann} (p_D \overline{\mathfrak L}) = \mathsf{Ann} (p_D \mathfrak L + p_D \overline{\mathfrak L}) = \mathsf{Ann} (D L \otimes \mathbbm C) = 0,
\]
a contradiction. So $\Delta $ is a non-vanishing (local) derivation. It is easy to see that, for a non-vanishing derivation of $L$, locally, around every point, there always exists a trivialization $L \cong \mathbbm R_M$ identifying $\Delta$ with a derivation of the form $f (1 - Z)$ with $f$ a nowhere vanishing function. As $p_{\mathbbm R} (f(1 -Z)) = f \neq 0$, this is the trivialization we where looking for.
\end{proof}

\begin{remark}
Let $\mathfrak L$, $x_0$ and $\Delta$ be as in the proof of Proposition \ref{prop:loc_hgc_type}. If $\Delta$ can be chosen so that $\Delta_{x_0} = \mathbb 1_{x_0}$, then \emph{every} local trivialization $L \cong \mathbbm R_M$ around $x_0$ identifies $\mathfrak L$ with the complex Dirac-Jacobi structure induced by a homogeneous generalized complex structure.
\end{remark}

\section{The transversal to a leaf}\label{Sec:transv}

Let $(L \to M, \mathbbm K)$ be a generalized contact bundle with 
\[
\mathbbm K = 
\left(
\begin{array}{cc}
\varphi & J^\sharp \\
\omega_\flat & - \varphi^\dag
\end{array}
\right),
\]
 and let $\mathfrak L$ be its $+\mathrm{i}$-eigenbundle. In this sections, as a preparation for the splitting theorems, we study special classes of submanifolds of $M$. Specifically, characteristic leaves of the underlying Jacobi structure $J$ and their \emph{transversals}. As we already outlined, in this paper, by a \emph{transversal} to a leaf $\mathcal O$ at a point $x_0 \in \mathcal O$, we will always understand a \emph{minimal dimension} transversal, i.e.~a submanifold $N$ through $x_0$ such that $T_{x_0} M = T_{x_0} N \oplus T_{x_0} \mathcal O$. We begin with \emph{contact leaves}.

\subsection{Contact leaves and their transversals}\label{Sec:Transv_contact}

Recall that an odd dimensional characteristic leaf $\mathcal O$ of $J$ possesses a canonical contact structure $H \subset T \mathcal O$. This can be seen as follows. First of all, 
$J$ restricts to a Jacobi structure $J_{\mathcal O}$ on the restricted line bundle $L_{\mathcal O} := L|_{\mathcal O} \to \mathcal O$. Now $ \sigma J_{\mathcal O}^\sharp : J^1 L_{\mathcal O} \to T\mathcal O$ is surjective (by definition of characteristic leaf), and it follows from $\dim \mathcal O = \mathrm{odd}$ that $J_{\mathcal O}^\sharp : J^1 L_{\mathcal O} \to D L_{\mathcal O}$ is surjective, hence an isomorphism. Let $\omega_{\mathcal O} = J_{\mathcal O}^{-1} \in \Omega^2_{L_{\mathcal O}}$ be the Atiyah $2$-form inverting $J_{\mathcal O}$, i.e.~$(\omega_{\mathcal O})_\flat := (J_{\mathcal O}^\sharp)^{-1}$. Notice that $DL_{\mathcal O} = (\operatorname{im} J^\sharp) |_{\mathcal O} $ and $\omega_{\mathcal O}$ agrees with the pointwise restriction to $\mathcal O$ of the $2$-form $\omega_J : \wedge^2 \operatorname{im} J^\sharp \to L$ from Remark \ref{rem:impi}. Now, the integrability condition for $J_{\mathcal O}$ is equivalent to $d_D \omega_{\mathcal O} = 0$, and $\iota_{\mathbb 1} \omega_{\mathcal O} \in \Omega^1_{L_{\mathcal O}}$ is necessarily of the form $\iota_{\mathbb 1} \omega_{\mathcal O} = \theta_{\mathcal O} \circ \sigma$ for a unique $L_{\mathcal O}$-valued $1$-form $\theta_{\mathcal O} : T\mathcal O \to L_{\mathcal O}$. The kernel $H$ of $\theta_{\mathcal O}$ is a contact structure containing the full information on $J_{\mathcal O}$. This contact structure can be equivalently encoded in a generalized contact structure 
\[
\mathbbm K_{\mathcal O} = 
\left(
\begin{array}{cc}
0& J_{\mathcal O}^\sharp \\
- (J^{-1}_{\mathcal O})_\flat & 0
\end{array}
\right)
\]
on $L_{\mathcal O}$. Let $\mathfrak L_{\mathcal O} \subset \mathbbm D L_{\mathcal O}\otimes \mathbbm C$ be the $+\mathrm{i}$-eigenbundle of $\mathbbm K_{\mathcal O}$. We have
\begin{equation}\label{eq:L_O}
\mathfrak L_{\mathcal O} := \left\{ (J_{\mathcal O}^\sharp (\psi), \mathrm{i} \psi) \in \mathbbm D L_{\mathcal O}\otimes \mathbbm C : \psi \in J^1 L_{\mathcal O} \otimes \mathbbm C \right\}.
\end{equation}

In the following, for an (immersed) submanifold $S \hookrightarrow M$, we simply denote by $L_S$ the restricted line bundle $L|_S$, and by $I_{S} : L|_{S} \hookrightarrow L$ the natural (injective) immersion. It is a regular vector bundle map covering the injective immersion $i_{S} : S \hookrightarrow M$.

\begin{proposition}\label{prop:contact_leaf}
Let $\mathcal O$ be an odd dimensional leaf of $J$. The backward image of the complex Dirac-Jacobi structure $\mathfrak L$ along the immersion $I_{\mathcal O} : L_{\mathcal O} \hookrightarrow L$ is a Dirac-Jacobi structure of generalized contact type on $L_{\mathcal O}$, denoted $I^!_{\mathcal O} \mathfrak L$. Additionally, it is a $B$-field transformation of $\mathfrak L_{\mathcal O}$:
\[
I^!_{\mathcal O} \mathfrak L = \mathfrak L_{\mathcal O}^B
\]
for some $B \in Z^2_{L_{\mathcal O}}$.
\end{proposition}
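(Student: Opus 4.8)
The plan is to compare $I^!_{\mathcal O}\mathfrak L$ with $\mathfrak L_{\mathcal O}$ explicitly, and to extract the $B$-field from the discrepancy. First I would unwind the definition of the backward image along $I_{\mathcal O} : L_{\mathcal O}\hookrightarrow L$. By definition,
\[
I^!_{\mathcal O}\mathfrak L = \left\{ (\Delta, I_{\mathcal O}^\ast\psi)\in \mathbbm D L_{\mathcal O}\otimes\mathbbm C : \left((I_{\mathcal O})_\ast\Delta, \psi\right)\in \mathfrak L\right\}.
\]
Since $i_{\mathcal O}$ is the immersion of the characteristic leaf $\mathcal O$, I would first check that the clean intersection condition (\ref{eq:cic}) holds, so that $I^!_{\mathcal O}\mathfrak L$ is indeed a smooth subbundle; here this is automatic because along $\mathcal O$ one has $p_D\mathfrak L + p_D\overline{\mathfrak L} = (\operatorname{im}J^\sharp + \overline{\operatorname{im}J^\sharp})\otimes\mathbbm C$ by Lemma \ref{lem:p_DLcap} together with the fact that $p_D\mathfrak L + p_D\overline{\mathfrak L} \supseteq \operatorname{Re}(p_D\mathfrak L)$, and $\operatorname{im}J^\sharp$ projects onto $T\mathcal O$ under the symbol, so $p_D\mathfrak L|_{\mathcal O} + D L_{\mathcal O}\otimes\mathbbm C$ has constant rank. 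Then $I^!_{\mathcal O}\mathfrak L$ is a complex Dirac-Jacobi structure on $L_{\mathcal O}$.

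Next I would show $I^!_{\mathcal O}\mathfrak L$ is of generalized contact type, i.e.\ $I^!_{\mathcal O}\mathfrak L \cap \overline{I^!_{\mathcal O}\mathfrak L} = 0$, and simultaneously identify its underlying data. The key computation is to determine $p_D\big(I^!_{\mathcal O}\mathfrak L\big)$: a derivation $\Delta\in D L_{\mathcal O}$ lies in this projection iff $\Delta$ extends to $(\Delta', \psi)\in\mathfrak L$ with $\sigma(\Delta')$ tangent to $\mathcal O$; using Lemma \ref{lem:p_DLcap} one sees $p_D\big(I^!_{\mathcal O}\mathfrak L\big)\otimes\mathbbm C = D L_{\mathcal O}\otimes\mathbbm C$, because $\operatorname{im}J^\sharp$ surjects onto $D L_{\mathcal O}$ (this is exactly the non-degeneracy of $J_{\mathcal O}$, which forces $J^\sharp_{\mathcal O}$ to be an isomorphism in the odd-dimensional case, as recalled before the statement). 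Having $p_D\big(I^!_{\mathcal O}\mathfrak L\big) = D L_{\mathcal O}$, the structure equation (\ref{eq:L_pi}) shows that $I^!_{\mathcal O}\mathfrak L$ is the graph of a complex Atiyah $2$-form $\varpi_{\mathcal O}\in \Omega^2_{L_{\mathcal O}}\otimes\mathbbm C$, namely the restriction of the canonical $2$-form $\varpi$ on $p_D\mathfrak L$ pulled back along $I_{\mathcal O}$. Since it is the graph of a $2$-form that is everywhere defined, automatically $I^!_{\mathcal O}\mathfrak L\cap\overline{I^!_{\mathcal O}\mathfrak L}=0$ (a nonzero real element of the intersection would be $(\Delta,\iota_\Delta\varpi_{\mathcal O})$ with both $\iota_\Delta\varpi_{\mathcal O}$ and $\iota_\Delta\overline\varpi_{\mathcal O}$ real, forcing $\iota_\Delta\operatorname{Im}\varpi_{\mathcal O}=0$, but $\operatorname{Im}\varpi_{\mathcal O} = \omega_J|_{\mathcal O}=\omega_{\mathcal O}$ is non-degenerate by Remark \ref{rem:impi}, so $\Delta=0$). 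Hence $I^!_{\mathcal O}\mathfrak L$ is of generalized contact type.

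Finally I would produce the $B$-field. Write $\varpi_{\mathcal O} = \operatorname{Re}\varpi_{\mathcal O} + \mathrm{i}\operatorname{Im}\varpi_{\mathcal O} = \operatorname{Re}\varpi_{\mathcal O} + \mathrm{i}\,\omega_{\mathcal O}$. Since $I^!_{\mathcal O}\mathfrak L$ is a Dirac-Jacobi structure, its defining $2$-form is $d_D$-closed; comparing with $\mathfrak L_{\mathcal O}$, whose defining $2$-form in the graph picture is $\mathrm{i}\,\omega_{\mathcal O}$ (read off from (\ref{eq:L_O}), using $(\omega_{\mathcal O})_\flat = (J^\sharp_{\mathcal O})^{-1}$), we get that $I^!_{\mathcal O}\mathfrak L$ is the graph of $\mathrm{i}\omega_{\mathcal O} + B$ with $B := \operatorname{Re}\varpi_{\mathcal O}$ a \emph{real} Atiyah $2$-form, and closedness of both graph $2$-forms forces $d_D B = 0$, i.e.\ $B\in Z^2_{L_{\mathcal O}}$. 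Then by Remark \ref{rem:star_neutral} (and the graph description of $B$-field transforms), $\operatorname{\mathsf{graph}}(\mathrm{i}\omega_{\mathcal O}+B) = \mathfrak L_{\mathcal O}^B$, which is the claim. The main obstacle I anticipate is the bookkeeping in the second paragraph: verifying cleanly that $p_D\big(I^!_{\mathcal O}\mathfrak L\big) = D L_{\mathcal O}$ (not merely $\operatorname{im}J^\sharp|_{\mathcal O}$, which \emph{is} $D L_{\mathcal O}$ precisely by non-degeneracy of $J_{\mathcal O}$) and that the pulled-back $2$-form is well-defined and closed — essentially a Dirac-geometry pullback/pushforward computation that must be carried out carefully on the omni-Lie algebroid rather than invoked from the even-dimensional theory verbatim.
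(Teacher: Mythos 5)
Your overall route coincides with the paper's: check the clean intersection condition, show that $p_D I^!_{\mathcal O}\mathfrak L = DL_{\mathcal O}\otimes\mathbbm C$, so that $I^!_{\mathcal O}\mathfrak L = \operatorname{\mathsf{graph}}(\varpi_{\mathcal O})_\flat$ with $\varpi_{\mathcal O}$ agreeing with the canonical form $\varpi$ on $DL_{\mathcal O}=\operatorname{im}J^\sharp|_{\mathcal O}$, identify $\operatorname{Im}\varpi_{\mathcal O}$ with the non-degenerate form $\omega_{\mathcal O}=J_{\mathcal O}^{-1}$ via Remark \ref{rem:impi}, and take $B=\operatorname{Re}\varpi_{\mathcal O}$, which is closed because $d_D\varpi_{\mathcal O}=0$ by involutivity. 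The one real deviation is harmless and arguably cleaner: you deduce $I^!_{\mathcal O}\mathfrak L\cap\overline{I^!_{\mathcal O}\mathfrak L}=0$ from the graph description together with the non-degeneracy of $\operatorname{Im}\varpi_{\mathcal O}$, whereas the paper establishes generalized contact type first, by gluing the two $1$-jets $\chi',\chi''$ into a single $\chi$ along $p_D\mathfrak L + p_D\overline{\mathfrak L}$ and invoking $\mathfrak L\cap\overline{\mathfrak L}=0$. Your ordering is legitimate because (\ref{eq:varpi}) and (\ref{eq:L_pi}) hold for an arbitrary complex Dirac-Jacobi structure, not only for those of contact type.

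The one step that does not stand as written is the smoothness argument in your first paragraph. Lemma \ref{lem:p_DLcap} computes the \emph{intersection} $p_D\mathfrak L\cap p_D\overline{\mathfrak L}=\operatorname{im}J^\sharp\otimes\mathbbm C$, not the sum; your identity $p_D\mathfrak L + p_D\overline{\mathfrak L} = (\operatorname{im}J^\sharp+\overline{\operatorname{im}J^\sharp})\otimes\mathbbm C$ is false (the sum is all of $DL\otimes\mathbbm C$, since $\mathfrak L\oplus\overline{\mathfrak L}=\mathbbm D L\otimes\mathbbm C$), and constant rank of $p_D\mathfrak L|_{\mathcal O}+DL_{\mathcal O}\otimes\mathbbm C$ does not follow from what you wrote, since $p_D\mathfrak L$ is only a smooth, generally non-regular subbundle. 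The repair is the argument the paper gives: because $\dim\mathcal O$ is odd, $\operatorname{im}J^\sharp|_{\mathcal O}=DL_{\mathcal O}$, so by Lemma \ref{lem:p_DLcap} one has $DL_{\mathcal O}\otimes\mathbbm C = p_D\mathfrak L|_{\mathcal O}\cap p_D\overline{\mathfrak L}|_{\mathcal O}\subset p_D\mathfrak L|_{\mathcal O}$; hence the bundle in (\ref{eq:cic}) is just $p_D\mathfrak L|_{\mathcal O}$, whose rank is constant along $\mathcal O$ because $\operatorname{rank}p_D\mathfrak L + \operatorname{rank}p_D\overline{\mathfrak L} = \operatorname{rank}\bigl(p_D\mathfrak L + p_D\overline{\mathfrak L}\bigr) + \operatorname{rank}\bigl(p_D\mathfrak L\cap p_D\overline{\mathfrak L}\bigr)$, the right-hand side is constant along $\mathcal O$, and the two terms on the left are equal by conjugation. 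With this fixed, the rest of your argument goes through, including the final identification $e^B(\mathfrak L_{\mathcal O}) = \operatorname{\mathsf{graph}}\bigl(\mathrm{i}\,\omega_{\mathcal O}+B\bigr)_\flat = I^!_{\mathcal O}\mathfrak L$.
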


\begin{proof}
We divide the proof in several steps. First we prove that $I^!_{\mathcal O} \mathfrak L \subset \mathbbm DL_{\mathcal O}\otimes \mathbbm C$ is a regular subbundle, checking the clean intersection condition (\ref{eq:cic}):
\[
\operatorname{rank}_{\mathbbm C} \left(p_D \mathfrak L|_{\mathcal O} + DL_{\mathcal O} \otimes \mathbbm C\right) = \text{constant}.
\]
To do this notice that 
\[
D L_{\mathcal O} \otimes \mathbbm C = \operatorname{im} J^\sharp|_{\mathcal O} \otimes \mathbbm C = p_D \mathfrak L|_{\mathcal O} \cap p_D\overline{ \mathfrak L}|_{\mathcal O} \subset p_D \mathfrak L|_{\mathcal O}.
\]
Hence 
\[
p_D \mathfrak L|_{\mathcal O} + DL_{\mathcal O} \otimes \mathbbm C = p_D \mathfrak L|_{\mathcal O}
\]
which is constant rank because
\begin{enumerate}
\item $p_D \mathfrak L|_{\mathcal O} + p_D\overline{ \mathfrak L}|_{\mathcal O} = (DL)|_{\mathcal O} \otimes \mathbbm C$ is constant rank,
\item $p_D \mathfrak L|_{\mathcal O} \cap p_D\overline{ \mathfrak L}|_{\mathcal O} = D L_{\mathcal O} \otimes \mathbbm C$ is constant rank, and
\item $p_D \mathfrak L|_{\mathcal O}$ and $p_D\overline{ \mathfrak L}|_{\mathcal O}$ have the same rank.
\end{enumerate}
This proves the first part of the statement.

Next we show that $I^!_{\mathcal O} \mathfrak L$ is a Dirac-Jacobi structure of generalized contact type. For this, it is enough to check that 
\[
I^!_{\mathcal O} \mathfrak L \cap \overline{I^!_{\mathcal O} \mathfrak L} = 
I^!_{\mathcal O} \mathfrak L \cap I^!_{\mathcal O} \overline{\mathfrak L} = 0.
\]
So let $(\Delta, \psi) \in (I^!_{\mathcal O} \mathfrak L \cap I^!_{\mathcal O} \overline{\mathfrak L})_x $ for some $x \in \mathcal O$. This means that, there exist $\chi', \chi'' \in J^1_x L \otimes \mathbbm C$ such that $\psi = I^\ast_{\mathcal O} \chi' =  I^\ast_{\mathcal O} \chi''$, and, additionally, $(\Delta, \chi') \in \mathfrak L_x $, and $(\Delta, \chi'') \in \overline{\mathfrak L}_x$. Now, define $\chi \in J^1_x L \otimes \mathbbm C$ by putting
\[
\langle \chi, \nabla \rangle = \left\{
\begin{array}{cl}
\langle \chi' , \nabla \rangle & \text{if $\nabla \in p_D \mathfrak L_x$} \\
\langle \chi'' , \nabla \rangle & \text{if $\nabla \in p_D \overline{\mathfrak L}_x$}
\end{array}
 \right. .
\]
As both $\chi'$ and $\chi''$ agree with $\psi$ on $(p_D \mathfrak L \cap p_D \overline{\mathfrak L})_x = (\operatorname{im} J^\sharp)_x \otimes \mathbbm C = D_x L_{\mathcal O}\otimes \mathbbm C$, then $\chi$ is well-defined. It immediately follows from (\ref{eq:L_pi}) that $(\Delta, \chi) \in (\mathfrak L \cap \overline{\mathfrak L})_x = 0$. So $(\Delta, \chi) = 0$, hence $(\Delta, \psi) = 0$. We conclude that $I^!_{\mathcal O} \mathfrak L$ is a Dirac-Jacobi structure of generalized contact type. In particular, there is an underlying Jacobi structure $\tilde J$ on $L_{\mathcal O}$.

As a third step, we prove that the Jacobi structure underlying $I^!_{\mathcal O} \mathfrak L$ is precisely $J_{\mathcal O}$: the restriction to $\mathcal O$ of the Jacobi structure $J$. In other words, $\tilde J = J_{\mathcal O}$. First of all,
\[
p_{D} I^!_{\mathcal O} \mathfrak L = p_D \mathfrak L \cap (D L_{\mathcal O} \otimes \mathbbm C) = D L_{\mathcal O} \otimes \mathbbm C.
\]
In particular the $L_{\mathcal O}$-valued $2$-form $\varpi_{\mathcal O}$ induced by $I^!_{\mathcal O} \mathfrak L$ on $p_{D} I^!_{\mathcal O} \mathfrak L$ is a genuine (complex) Atiyah $2$-form on $L_{\mathcal O}$. Notice that $\varpi_{\mathcal O}$ actually agrees with $\varpi$ on $D L_{\mathcal O}$. Indeed let $\Delta, \nabla \in DL_{\mathcal O}$. There is $\psi \in J^1 L$ such that $(\Delta, I_{\mathcal O}^\ast \psi) \in I^!_{\mathcal O} \mathfrak L$. Compute
\[
\varpi_{\mathcal O} (\Delta, \nabla) = \langle  I_{\mathcal O}^\ast \psi, \nabla \rangle
= \langle \psi, \nabla \rangle = \varpi (\Delta, \nabla).
\]

Now, let $\psi \in J^1 L_{\mathcal O}$, and let $\Psi \in J^1 L$ be such that $I^\ast_{\mathcal O} \Psi = \psi$. We want to compare $J_{\mathcal O}^\sharp \psi$ and $\tilde J{}^\sharp \psi$. To do this pick $\nabla \in D L_{\mathcal O}$ and compute
\begin{gather*}
\operatorname{Im} \varpi_{\mathcal O} (\tilde J{}^\sharp \psi , \nabla) =  \langle \psi, \nabla \rangle = \langle  I_{\mathcal O}^\ast \Psi, \nabla \rangle = \langle \Psi, \nabla \rangle \\ = \operatorname{Im} \varpi (J^\sharp \Psi, \nabla)
= \operatorname{Im} \varpi (J^\sharp_{\mathcal O} \psi, \nabla) = \operatorname{Im} \varpi_{\mathcal O} (J^\sharp_{\mathcal O} \psi, \nabla).
\end{gather*}
But $\operatorname{Im} \varpi_{\mathcal O}$ is non-degenerate (Remark \ref{rem:impi}) so $\tilde J^\sharp \psi = J^\sharp_{\mathcal O} \psi$. In particular $\operatorname{Im} \varpi_{\mathcal O} = J^{-1}_{\mathcal O}$.

Finally, we prove that $I^!_{\mathcal O} \mathfrak L$ is a $B$-field transformation of $\mathfrak L_{\mathcal O}$. From $p_D I^!_{\mathcal O} \mathfrak L = D L_{\mathcal O} \otimes \mathbbm C$, we have
\begin{equation}\label{eq:IL}
I^!_{\mathcal O} \mathfrak L = \left\{(\Delta, \iota_\Delta \varpi_{\mathcal O}) : \Delta \in DL_{\mathcal O} \right\} = \operatorname{\mathsf{graph}} (\varpi_{\mathcal O})_\flat \subset \mathbbm D L_{\mathcal O}\otimes \mathbbm C.
\end{equation}
Let $B = \operatorname{Re} \varpi_{\mathcal O} \in \Omega^2_{L_{\mathcal O}}$.
From (\ref{eq:IL}), and the involutivity of $I^{!}_{\mathcal O} \mathfrak L$ we have $d_D \varpi_{\mathcal O} = 0$, hence $d_D B = 0$. Finally compute
\[
(I^!_{\mathcal O} \mathfrak L)^{- B} = \operatorname{\mathsf{graph}} (\varpi_{\mathcal O} - \operatorname{Re} \varpi_{\mathcal O}) = \operatorname{\mathsf{graph}} (\mathrm{i} \operatorname{Im} \varpi_{\mathcal O}) = \mathfrak L_{\mathcal O},
\]
where we used (\ref{eq:L_O}) and the fact that $\operatorname{Im} \varpi_{\mathcal O} = J^{-1}_{\mathcal O}$.
\end{proof}

We now pass to transversals. A transversal to a characteristic leaf of a generalized complex manifold inherits a generalized complex structure, at least around the intersection point with the leaf. The precise analogue cannot be true for contact leaves of a generalized contact structure, simply because, in this case, transversals are even dimensional. 

\begin{proposition}\label{prop:contact_transversal}
Let $N$ be a transversal at $x_0 \in \mathcal O$ to an odd dimensional leaf $\mathcal O$ of $J$. Around $x_0$, the backward image of the complex Dirac-Jacobi structure $\mathfrak L$ along the embedding $I_N : L_N \hookrightarrow L$ is a complex Dirac-Jacobi structure $I^!_N \mathfrak L$ of homogeneous generalized complex type such that 
\[
\left(I^!_N \mathfrak L \cap \overline{I^!_N \mathfrak L}\right)_{x_0}
\]
 is spanned by a vector of the form $(\mathbb 1_{x_0}, \psi)$. In particular, any local trivialization $L_N \cong \mathbbm R_N$ around $x_0$, identifies $I^!_N \mathfrak L$  with the complex Dirac-Jacobi structure corresponding to a homogeneous generalized complex structure.
\end{proposition}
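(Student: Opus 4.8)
\textbf{Proof plan for Proposition \ref{prop:contact_transversal}.}

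The plan is to mimic, step by step, the analogous argument for transversals to characteristic leaves in generalized complex geometry, keeping track of the extra datum coming from the line bundle. First I would verify that the backward image $I_N^! \mathfrak L$ is a genuine (regular) subbundle near $x_0$, by checking the clean intersection condition \eqref{eq:cic}, namely that $\operatorname{rank}_{\mathbbm C}\bigl(p_D \mathfrak L|_N + D(L_N)\otimes\mathbbm C\bigr)$ is locally constant. The transversality hypothesis $T_{x_0}M = T_{x_0}N \oplus T_{x_0}\mathcal O$ together with the fact that, by Lemma \ref{lem:p_DLcap}, $\sigma(p_D\mathfrak L + p_D\overline{\mathfrak L}) \otimes\mathbbm C \supseteq \sigma(\operatorname{im}J^\sharp)\otimes\mathbbm C = T\mathcal O \otimes\mathbbm C$ near $x_0$, should force the relevant rank to be maximal (equal to $\operatorname{rank} D L_N + \operatorname{rank} D L_N$, or rather to stabilize) in a neighborhood of $x_0$; here one uses that the characteristic distribution has locally constant rank along $\mathcal O$ and that $N$ meets it cleanly. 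This gives smoothness, hence $I_N^!\mathfrak L$ is a complex Dirac-Jacobi structure on $L_N$.

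Next I would establish the two defining conditions of \emph{homogeneous generalized complex type}, i.e.\ $\operatorname{rank}_{\mathbbm C}(I_N^!\mathfrak L \cap \overline{I_N^!\mathfrak L}) = 1$ and $p_D I_N^!\mathfrak L + p_D\overline{I_N^!\mathfrak L} = DL_N \otimes\mathbbm C$. For the second, one computes $p_D I_N^!\mathfrak L = (p_D\mathfrak L|_N) \cap (DL_N\otimes\mathbbm C)$ (and similarly for $\overline{\mathfrak L}$), and then sums; the transversality condition, by a dimension count exactly as in \cite{G2011}, yields the whole of $DL_N\otimes\mathbbm C$. For the first, note that $I_N^!\mathfrak L \cap \overline{I_N^!\mathfrak L} = I_N^!(\mathfrak L)\cap I_N^!(\overline{\mathfrak L})$, and an element of this intersection is a pair $(\Delta, I_N^\ast\psi)$ with $\Delta \in DL_N$ such that $I_{N*}\Delta \in p_D\mathfrak L \cap p_D\overline{\mathfrak L} = \operatorname{im}J^\sharp\otimes\mathbbm C$; combined with $\Delta$ lying in $DL_N$ and the transversality $\sigma(\operatorname{im}J^\sharp) = T\mathcal O$ transverse to $TN$, this shows $\sigma(I_{N*}\Delta) \in T\mathcal O \cap TN = 0$ at $x_0$, hence $I_{N*}\Delta$ is a multiple of $\mathbb 1$ there. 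A rank-counting/continuity argument then pins the intersection down to complex rank $1$ in a neighborhood.

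The remaining and most delicate point is the claim that $(I_N^!\mathfrak L \cap \overline{I_N^!\mathfrak L})_{x_0}$ is spanned by a vector of the form $(\mathbb 1_{x_0}, \psi)$ — equivalently, that the derivation-part does not degenerate to $0$ at $x_0$. This is where I expect the main obstacle: one must rule out that the spanning section has vanishing symbol \emph{and} vanishing $\mathbb 1$-component at $x_0$. The idea is the same as in the proof of Proposition \ref{prop:loc_hgc_type}: if the derivation component $\Delta$ of a nonzero real section of the intersection vanished at $x_0$, then $0\neq\psi_{x_0} \in J^1 L_N \cap (\text{something annihilating } p_D I_N^!\mathfrak L + p_D \overline{I_N^!\mathfrak L}) = \mathsf{Ann}(DL_N\otimes\mathbbm C) = 0$, a contradiction, using condition (2) just established. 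Hence $\Delta_{x_0}\neq 0$; since by the previous paragraph $\sigma(I_{N*}\Delta_{x_0}) = 0$, we get $I_{N*}\Delta_{x_0} = c\,\mathbb 1_{x_0}$ for some $c\neq 0$, so after rescaling the spanning section we may take $\Delta_{x_0} = \mathbb 1_{x_0}$ (using that $I_N$ is a regular vector bundle map, so $I_{N*}$ sends $\mathbb 1_{L_N}$ to $\mathbb 1_L$). The final assertion — that every local trivialization $L_N \cong \mathbbm R_N$ around $x_0$ then identifies $I_N^!\mathfrak L$ with the complex Dirac-Jacobi structure $\mathfrak L_{(\mathbbm J,\mathbbm Z)}$ of a homogeneous generalized complex structure — follows immediately from Proposition \ref{prop:loc_hgc_type} and the Remark after it, since $\Delta_{x_0} = \mathbb 1_{x_0}$ is exactly the hypothesis making \emph{every} trivialization work.
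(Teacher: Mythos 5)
Your first two steps are essentially sound: the clean–intersection check (maximality of $p_D \mathfrak L|_{N} + DL_N \otimes \mathbbm C$ at $x_0$ plus lower semicontinuity of the rank) matches the paper's argument, and your plan to prove $p_D I_N^!\mathfrak L + p_D I_N^!\overline{\mathfrak L} = DL_N \otimes \mathbbm C$ directly by a dimension count at $x_0$ (using that for an odd-dimensional leaf $(\operatorname{im}J^\sharp)_{x_0} = D_{x_0}L_{\mathcal O}$, so $\operatorname{im}J^\sharp_{x_0} \cap D_{x_0}L_N = \langle \mathbb 1_{x_0}\rangle$) and then semicontinuity is workable; it even reverses the paper's logical order harmlessly, since the paper deduces this condition from the structure of the intersection rather than the other way around. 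Likewise, once one knows the intersection is nonzero at $x_0$, your observation that its derivation part must lie in $\operatorname{im}J^\sharp_{x_0}\cap D_{x_0}L_N=\langle\mathbb 1_{x_0}\rangle$, combined with condition (2), does identify the spanning vector as $(\mathbb 1_{x_0},\psi)$ more cheaply than the paper does.

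The genuine gap is the lower bound on the rank of $I_N^!\mathfrak L \cap \overline{I_N^!\mathfrak L}$. Everything you do only bounds this rank from above: condition (2) kills elements with vanishing derivation part, and upper semicontinuity of intersection dimensions propagates an upper bound, not a lower one, to nearby points. You never show the intersection is nonzero, neither at $x_0$ nor at neighbouring points, and your later step ("let $\Delta$ be the derivation component of a nonzero real section of the intersection") silently presupposes exactly this. This is where the bulk of the paper's proof lives: it introduces the skew form $\mu$ on $\nu^\ast N \otimes L_N$ (a bundle of odd rank $\dim\mathcal O$), shows $\operatorname{rank}\mu$ is maximal, hence constant, near $x_0$, and builds an exact sequence $0 \to R \to \nu^\ast N\otimes L_N \to \nu N$ identifying the real part $R$ of the intersection with $\ker\mu^\sharp$; the surjectivity onto $\ker\mu^\sharp$ is obtained by explicitly exhibiting, for $\eta\in\ker\mu^\sharp$, the eigensection $\mathrm{i}(\mathrm{id}-\mathrm{i}\mathbbm K)(0,\eta) = (J^\sharp\eta,\, \mathrm{i}\eta-\varphi^\dag\eta) \in \mathfrak L$, a step which uses the full generalized contact structure $\mathbbm K$ and not merely the Dirac--Jacobi bundle $\mathfrak L$. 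Some such mechanism (or a substitute, e.g.\ the parity statement that two maximal isotropics of a split symmetric form of rank $2m$ which lie in the same family meet in dimension congruent to $m$ modulo $2$, here with $m=\dim N+1$ odd) is indispensable: without it the intersection could a priori be zero, the structure would fail condition (1) of "homogeneous generalized complex type", and the assertion about the spanning vector $(\mathbb 1_{x_0},\psi)$ would be vacuous. Your "rank-counting/continuity argument" as stated does not supply it.
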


\begin{proof}
First of all we prove that $I^{!}_N \mathfrak L \subset \mathbbm D L_N$ is a regular subbundle, hence a Dirac-Jacobi structure on $L_N$, checking the clean intersection condition (\ref{eq:cic}):
\[
\operatorname{rank}_{\mathbbm C}(p_D \mathfrak L|_N + DL_N \otimes \mathbbm C) = \mathrm{constant}.
\]
We have
\[
p_D \mathfrak L|_N \supset p_D \mathfrak L|_N \cap p_D \overline{\mathfrak L}|_N = \operatorname{im} J^\sharp |_N \otimes \mathbbm C.
\]
At the point $x_0$ we have $(\operatorname{im} J^\sharp)_{x_0} \otimes \mathbbm C = D_{x_0} L_{\mathcal O}$, so
\[
p_D \mathfrak L_{x_0} + D_{x_0}L_N \otimes \mathbbm C \supset D_{x_0} L_{\mathcal O} + D_{x_0} L_N = D_{x_0} L.
\]
But $p_D \mathfrak L|_N + DL_N \otimes \mathbbm C \subset (DL)|_N$ is a smooth, possibly non-regular subbundle, hence its rank can only increase around $x_0$, and we conclude that
\begin{equation}\label{eq:sum}
p_D \mathfrak L|_N + DL_N \otimes \mathbbm C = (DL)|_N \otimes \mathbbm C
\end{equation}
in a whole neighborhood of $x_0$. In particular, the left hand side has constant rank.

Next we show that $\operatorname{rank}_{\mathbbm C}(I^!_N \mathfrak L \cap I^!_N \overline{\mathfrak L}) = 1$ around $x_0$. Denote by $\nu N = TM|_N / TN$ and $\nu^\ast N = \mathsf{Ann}(TN)\subset T^\ast M|_N$ the normal and the conormal bundle to $N$, respectively. It is useful to consider the following skew-symmetric bilinear map
\[
\mu : \wedge^2 \left(\nu^\ast N \otimes L_N\right) \to L_N, \quad (\eta, \theta) \mapsto \langle J^\sharp\eta,  \theta \rangle,
\]
and the associated vector bundle map $\mu^\sharp : \nu^\ast N \otimes L_N \to \nu N$ implicitly defined by
\[
\langle \theta, \mu^\sharp \eta  \rangle = \mu (\eta, \theta), \quad \eta, \theta \in \nu^\ast N \otimes L_N.
\]
In other words $\mu^\sharp$ is the composition 
\[
\nu^\ast N \otimes L_N \overset{J^\sharp}{\longrightarrow} (D L)|_N \overset{\sigma}{\longrightarrow} TM|_N \longrightarrow \nu N,
\]
where the last arrow is the natural projection (with kernel $TN$). We want to show that $\mu$ has maximal rank around $x_0$: that is $\operatorname{rank} \mu = \operatorname{rank}(\nu N) - 1 = \dim \mathcal O -1 = \mathrm{even}$. To do this it is enough to show that $\operatorname{rank}_{x_0} \mu =  \dim \mathcal O -1$, in other words $\dim (\ker \mu^\sharp_{x_0}) = 1$. So compute
\[
\ker \mu^\sharp_{x_0} = \left\{\eta \in \nu^\ast_{x_0} N \otimes L_{x_0} : J_{x_0}^\sharp \eta \in DL_N \right\}.
\]
But $\nu^\ast_{x_0} N \otimes L_{x_0} = T^\ast_{x_0} \mathcal O \otimes L_{x_0}$, and $(\operatorname{im} J^\sharp)_{x_0} = D_{x_0} L_{\mathcal O}$, so we find
\begin{equation}\label{eq:x_0}
\begin{aligned}
\ker \mu^\sharp_{x_0} & = \left\{\eta \in T_{x_0} \mathcal O \otimes L_{x_0} : J_{x_0}^\sharp \eta = r \cdot \mathbb 1_{x_0}, \text{for some $r \in \mathbbm R$}\right\}\\
& = (J_{\mathcal O}^{-1})_{\flat} \langle \mathbb 1_{x_0} \rangle = \langle (\theta_{\mathcal O})_{x_0} \rangle.
\end{aligned}
\end{equation}
Now, we go back to $I^!_N \mathfrak L$ and consider the real (a-priori not necessarily regular) subbundle
\[
R := \left\{\operatorname{Re} \alpha : \alpha \in I^!_N \mathfrak L \cap I^!_N \overline{\mathfrak L}  \right\} \subset I^!_N \mathfrak L \cap I^!_N \overline{\mathfrak L}.
\]
clearly $I^!_N \mathfrak L \cap I^!_N \overline{\mathfrak L}$ is (canonically isomorphic to) the complexification of $R$. We want to show that there is a (pointwise) exact sequence:
\begin{equation}\label{eq:es}
0 \longrightarrow R \overset{\kappa}{\longrightarrow} \nu^\ast N \otimes L_N \overset{\mu^\sharp}{\longrightarrow} \nu N,
\end{equation}
proving that, around $x_0$, $\operatorname{rank}_{\mathbbm R} R = \operatorname{rank}_{\mathbbm C}(I^!_N \mathfrak L \cap I^!_N \overline{\mathfrak L}) = 1$ as claimed. Define $\kappa : R \to \nu^\ast N \otimes L_N$ as follows. Take $\alpha = (\Delta, \psi) \in R$. This means that $(\Delta, \psi) \in \mathbbm D L_N$ is such that there exists $\chi \in J^1 L \otimes \mathbbm C$ with $(\Delta, \chi) \in \mathfrak L|_N$ (hence $(\Delta, \overline \chi) \in \overline{\mathfrak L}|_N$) and $I^\ast_N \chi = \psi$. Actually, $\chi$ is unique. Indeed, let $\chi' \in J^1 L \otimes \mathbbm C$ be such that $(\Delta, \chi') \in \mathfrak L|_N$ and $I^\ast_N \chi' = \psi$. Then, on one side
\[
\chi - \chi' \in \left((J^1 L)|_N \otimes \mathbbm C \right)\cap \mathfrak L|_N = \mathsf{Ann} \left(p_D \mathfrak L|_N\right).
\]
On the other side, $I_N^\ast (\chi - \chi') = 0$, i.e.
\[
\chi - \chi' \in \mathsf{Ann} \left( D L_N \otimes \mathbbm C\right),
\]
so
\[
\chi - \chi'  \in \mathsf{Ann} \left( p_D \mathfrak L|_N + D L_N \otimes \mathbbm C \right) = 0
\]
where we used (\ref{eq:sum}) (which holds true around $x_0$). So if we work around $x_0$, $\chi = \chi'$, we put
\[
\kappa (\Delta, \psi) := \operatorname{Im} \chi,
\]
and, from $\psi = I^\ast_N \chi = I^\ast_N \overline \chi$, it belongs to $\nu^\ast N \otimes L_N = \mathsf{Ann} (D L_N) \subset (J^1 L)|_N$. 

Before proving that the sequence (\ref{eq:es}) is exact, the following remark is useful.  Let $(\Delta,\psi) \in R$ and let $\chi$ be as above. Then 
\begin{equation}\label{eq:Delta,J,psi}
\Delta = J^\sharp (\operatorname{Im} \chi).
\end{equation}
Indeed, from $\mathbbm K (\Delta, \chi) = \mathrm{i} (\Delta, \chi)$, we find $\mathrm{i} \Delta = \varphi \Delta + J^\sharp \chi$ (take just the first component). Similarly, from $\mathbbm K (\Delta, \overline \chi) = -\mathrm{i} (\Delta, \overline \chi)$, we find $\mathrm{i} \Delta = - \varphi \Delta - J^\sharp \overline \chi$. So $\Delta = J^\sharp (\chi - \overline \chi)/2\mathrm{i} = J^\sharp(\operatorname{Im} \chi)$.

Now, we prove that (\ref{eq:es}) is exact. First of all $\kappa$ is injective. Indeed, if $\kappa (\Delta, \psi) = \operatorname{Im} \chi = 0$, then $\chi = \overline \chi$ and $(\Delta, \chi) \in \mathfrak L \cap \overline{\mathfrak L} = 0$, so $\chi = 0$, and, from (\ref{eq:Delta,J,psi}), $(\Delta,\psi) = (0, I_N^\ast \chi) = 0$. It remains to show that $\ker \mu^\sharp = \operatorname{im} \kappa$. So let $(\Delta, \psi) \in R$, let $\chi$ be as above, and let $\eta \in \nu^\ast N \otimes L_N$. Compute
\[
\langle \mu^\sharp (\operatorname{Im} \chi) , \eta \rangle = \langle J^\sharp  (\operatorname{Im} \chi), \eta \rangle = \langle \Delta, \eta \rangle = 0,
\]
where we used (\ref{eq:Delta,J,psi}) again, and the fact that $\Delta \in DL_N$. So $\ker \mu^\sharp \subset \operatorname{im} \kappa$. Finally, let $\eta \in \nu^\ast N \otimes L_N$ be such that $\mu^\sharp \eta = 0$. This means that $\Delta := J^\sharp \eta \in D L_N$. Put
\[
\alpha := (\Delta, -I^\ast_N (\varphi^\dag \eta)).
\]
We claim that $\alpha \in R$, and $\eta = \kappa (\alpha)$. To see this notice that
\[
(\Delta, \mathrm{i} \eta - \varphi^\dag \eta) = \mathrm{i} \left(\mathrm{id} - \mathrm{i} \mathbbm K \right)(0, \eta) \in \mathfrak L
\]
hence $(\Delta, I^\ast_N (\mathrm{i} \eta - \varphi^\dag \eta)) = \alpha \in R$. Additionally
\[
\kappa (\alpha) = \operatorname{Im} (\mathrm{i} \eta - \varphi^\dag \eta) = \eta.
\]
We conclude that $\ker \mu^\sharp = \operatorname{im} \kappa$, and $\operatorname{rank}_{\mathbbm C} (I^!_N \mathfrak L \cap I^!_N \overline{\mathfrak L}) = 1$ as claimed.

To prove that $I^!_N \mathfrak L$ is a complex Dirac-Jacobi structure of homogeneous generalized complex type, it remains to show that $p_D I^!_N \mathfrak L + p_D I^!_N \overline{\mathfrak L} = D L_N$. To do this we compute
\[
\mathsf{Ann} \left( p_D I^!_N \mathfrak L + p_D I^!_N \overline{\mathfrak L} \right)  =
\mathsf{Ann} \left( p_D I^!_N \mathfrak L \right)\cap \mathsf{Ann} \left(p_D I^!_N \overline{\mathfrak L} \right) 
 = \left(J^1 L_N \otimes \mathbbm C\right) \cap I^!_N \mathfrak L \cap I^!_N \overline{\mathfrak L}.
\]
But the above discussion, together with formula (\ref{eq:x_0}), reveals that, at the point $x_0$, $R$, hence $I^!_N \mathfrak L \cap I^!_N \overline{\mathfrak L}$, is spanned by an element of the form $(\mathbb 1_{x_0}, \zeta)$. In particular, 
\[
\left(J^1 L_N \otimes \mathbbm C\right) \cap I^!_N \mathfrak L \cap I^!_N \overline{\mathfrak L} = 0
\]
at the point $x_0$, hence in a whole neighborhood of $x_0$. This concludes the proof.
\end{proof}

\subsection{Lcs leaves and their transversals}\label{Sec:Transv_lcs}

We now pass to lcs leaves. As already mentioned, an even dimensional characteristic leaf of $J$ possesses a canonical lcs structure. To see this one can argue as follows. As before, $J$ restricts to a Jacobi structure $J_{\mathcal O}$ on the restricted line bundle $L_{\mathcal O} \to \mathcal O$. Now $ \sigma J_{\mathcal O}^\sharp : J^1 L_{\mathcal O} \to T\mathcal O$ is surjective again, and, as $\dim \mathcal O = \mathrm{even}$, then $J_{\mathcal O}^\sharp : J^1 L_{\mathcal O} \to D L_{\mathcal O}$ takes values in a $\dim \mathcal O$-dimensional subbundle $C_{\mathcal O} \subset D L_{\mathcal O}$ transversal to $\mathbbm R_{\mathcal O} \subset D L_{\mathcal O}$. In other words, $C_{\mathcal O}$ is the image of a linear connection $\nabla : T \mathcal O \to DL_{\mathcal O}$. Additionally, $J_{\mathcal O}$ induce a non-degenerate $L_{\mathcal O}$-valued $2$-form on $C_{\mathcal O}$, hence on $T \mathcal O$. So we get a non-degenerate $\Omega_{\mathcal O} \in \Omega^2 (\mathcal O, L_{\mathcal O})$. Notice that $C_{\mathcal O} = \operatorname{im} J^\sharp{}|_{\mathcal O} $, and $\Omega_{\mathcal O}$ (viewed as a $2$-form on $C_{\mathcal O}$) agrees with the pointwise restriction to $\mathcal O$ of the $2$-form $\omega_J : \wedge^2 \operatorname{im} J^\sharp \to L$ from Remark \ref{rem:impi}. Now the integrability condition for $J_{\mathcal O}$ is equivalent to $\nabla$ being flat and $d^\nabla \Omega_{\mathcal O} = 0$. So $(\Omega_\mathcal O, \nabla)$ is an lcs structure on $L_{\mathcal O}$ and it contains the full information on $J_{\mathcal O}$.
This lcs structure $(\Omega_{\mathcal O}, \nabla)$ can be equivalently encoded in a complex Dirac-Jacobi structure $\mathfrak L_{\mathcal O}$ given by the same formula (\ref{eq:L_O}) as before.

\begin{remark}
Let $\mathcal O$ be an even dimensional leaf of $J$. Then the subbundle $\mathfrak L_{\mathcal O} \subset \mathbbm D L_{\mathcal O}\otimes \mathbbm C$ given by formula (\ref{eq:L_O}) is a complex Dirac-Jacobi structure such that 
\[
p_D \mathfrak L_{\mathcal O} = p_D \overline{\mathfrak L}_{\mathcal O}  = C_{\mathcal O} \otimes \mathbbm C  \quad \text{and} \quad \mathfrak L_{\mathcal O} \cap \overline{\mathfrak L}_{\mathcal O}  = \mathsf{Ann} (C_{\mathcal O} \otimes \mathbbm C).
\]
\end{remark}

\begin{proposition}
Let $\mathcal O$ be an even dimensional leaf of $J$. The backward image of the complex Dirac-Jacobi structure $\mathfrak L$ along the immersion $I_{\mathcal O} : L_{\mathcal O} \hookrightarrow L$ is a Dirac-Jacobi structure on $L_{\mathcal O}$, denoted $I^!_{\mathcal O} \mathfrak L$, such that 
\begin{enumerate}
\item $\operatorname{rank}_{\mathbbm C} (I^!_{\mathcal O}\mathfrak L \cap I^!_{\mathcal O}\overline{\mathfrak L}) = 1$,
\item $\mathbb 1 \notin p_D I^!_{\mathcal O} \mathfrak L + p_D I^!_{\mathcal O}\overline{\mathfrak L}$, and 
\item $(p_D  I^!_{\mathcal O}\mathfrak L + p_D I^!_{\mathcal O}\overline{\mathfrak L}) \oplus \langle \mathbb 1 \rangle = DL_{\mathcal O} \otimes \mathbbm C$.
\end{enumerate}
Additionally, it is locally a $B$-field transformation of $\mathfrak L_{\mathcal O}$:
\[
I^!_{\mathcal O} \mathfrak L = \mathfrak L_{\mathcal O}^B
\]
for some $B \in Z^2_{L_{\mathcal O}}$.
\end{proposition}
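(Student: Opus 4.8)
The plan is to rerun the proof of Proposition~\ref{prop:contact_leaf}, adapting every step to the fact that $\dim\mathcal O$ is now even. \emph{Regularity.} First I would verify the clean intersection condition~(\ref{eq:cic}). Exactly as in Proposition~\ref{prop:contact_leaf}, $p_D\mathfrak L|_{\mathcal O}\cap p_D\overline{\mathfrak L}|_{\mathcal O}=\operatorname{im}J^\sharp|_{\mathcal O}\otimes\mathbbm C=C_{\mathcal O}\otimes\mathbbm C$; the novelty is that this is now a \emph{corank one} subbundle of $DL_{\mathcal O}\otimes\mathbbm C$, transversal to $\mathbbm R_{\mathcal O}=\langle\mathbb 1\rangle$. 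Hence $p_D\mathfrak L|_{\mathcal O}+DL_{\mathcal O}\otimes\mathbbm C=p_D\mathfrak L|_{\mathcal O}+\langle\mathbb 1\rangle$, which has locally constant rank: $\operatorname{rank}(p_D\mathfrak L|_{\mathcal O})$ is constant by the same argument as in Proposition~\ref{prop:contact_leaf} (using $p_D\mathfrak L|_{\mathcal O}+p_D\overline{\mathfrak L}|_{\mathcal O}=(DL)|_{\mathcal O}\otimes\mathbbm C$, which holds because $\mathfrak L\oplus\overline{\mathfrak L}=\mathbbm DL\otimes\mathbbm C$), and $\mathbb 1\notin p_D\mathfrak L|_{\mathcal O}$ because otherwise $\mathbb 1$, being real, would lie in $C_{\mathcal O}\otimes\mathbbm C$, contradicting transversality. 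So $I^!_{\mathcal O}\mathfrak L$ is a complex Dirac--Jacobi structure.

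\emph{The image of $p_D$.} The technical heart is to prove $p_D I^!_{\mathcal O}\mathfrak L=C_{\mathcal O}\otimes\mathbbm C$. The inclusion $\supseteq$ is clear. For $\subseteq$, I would first repeat verbatim the gluing argument of Step~2 in the proof of Proposition~\ref{prop:contact_leaf}: given $(\Delta,\psi)\in I^!_{\mathcal O}\mathfrak L\cap\overline{I^!_{\mathcal O}\mathfrak L}=I^!_{\mathcal O}\mathfrak L\cap I^!_{\mathcal O}\overline{\mathfrak L}$, the witnessing $1$-jets $\chi',\chi''$ coincide on $DL_{\mathcal O}$, hence on $p_D\mathfrak L|_{\mathcal O}\cap p_D\overline{\mathfrak L}|_{\mathcal O}$, so they glue to an element of $(\mathfrak L\cap\overline{\mathfrak L})|_{\mathcal O}=0$; this forces $\Delta=0$, and in fact $I^!_{\mathcal O}\mathfrak L\cap\overline{I^!_{\mathcal O}\mathfrak L}\subseteq\{0\}\oplus\mathsf{Ann}(C_{\mathcal O}\otimes\mathbbm C)$. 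Now suppose for contradiction that $p_D I^!_{\mathcal O}\mathfrak L\neq C_{\mathcal O}\otimes\mathbbm C$: since it contains the corank one subbundle $C_{\mathcal O}\otimes\mathbbm C$, it must be all of $DL_{\mathcal O}\otimes\mathbbm C$, and then by~(\ref{eq:L_pi}) $I^!_{\mathcal O}\mathfrak L=\operatorname{\mathsf{graph}}(\varpi_{\mathcal O})_\flat$ for a genuine complex Atiyah $2$-form $\varpi_{\mathcal O}$ on $L_{\mathcal O}$. But then $I^!_{\mathcal O}\mathfrak L\cap\overline{I^!_{\mathcal O}\mathfrak L}=\{(\Delta,\iota_\Delta\varpi_{\mathcal O}):\iota_\Delta\operatorname{Im}\varpi_{\mathcal O}=0\}$, and the vanishing of $\Delta$ just established would force $\operatorname{Im}\varpi_{\mathcal O}$ to be non-degenerate on $DL_{\mathcal O}$ --- impossible, since $\operatorname{Im}\varpi_{\mathcal O}$ is skew-symmetric and $\operatorname{rank}DL_{\mathcal O}=\dim\mathcal O+1$ is \emph{odd}. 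Hence $p_D I^!_{\mathcal O}\mathfrak L=C_{\mathcal O}\otimes\mathbbm C=p_D\overline{I^!_{\mathcal O}\mathfrak L}$. Now (2) and (3) are immediate; and since (as in Proposition~\ref{prop:contact_leaf}) the $2$-form $\varpi_{\mathcal O}$ of $I^!_{\mathcal O}\mathfrak L$ agrees with $\varpi$ on $C_{\mathcal O}$, Remark~\ref{rem:impi} gives $\operatorname{Im}\varpi_{\mathcal O}=\Omega_{\mathcal O}$, non-degenerate on $C_{\mathcal O}$, whence by~(\ref{eq:L_pi}) $I^!_{\mathcal O}\mathfrak L\cap\overline{I^!_{\mathcal O}\mathfrak L}=\{0\}\oplus\mathsf{Ann}(C_{\mathcal O}\otimes\mathbbm C)$ has rank one, which is~(1).

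\emph{The $B$-field.} Here $C_{\mathcal O}=\operatorname{im}\nabla$ is an involutive sub--Lie algebroid of $DL_{\mathcal O}$, identified through $\sigma$ with $T\mathcal O$ acting on $L_{\mathcal O}$ via the flat connection $\nabla$, and --- working as we may in a contractible neighbourhood of $x_0$, where $L_{\mathcal O}$ is trivial and $\nabla$ standard --- the splitting $DL_{\mathcal O}=C_{\mathcal O}\oplus\langle\mathbb 1\rangle$ decomposes Atiyah forms as in Remark~\ref{rem:trivial} with $d$ replaced by $d^\nabla$. Involutivity of $I^!_{\mathcal O}\mathfrak L$ unwinds to $d^\nabla\varpi_{\mathcal O}=0$, so $\operatorname{Re}\varpi_{\mathcal O}$ is a real, $d^\nabla$-closed, $L_{\mathcal O}$-valued $2$-form on $\mathcal O$. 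I would then use the $d^\nabla$-Poincar\'e lemma to choose $\beta\in\Omega^1(\mathcal O,L_{\mathcal O})$ with $d^\nabla\beta=-\operatorname{Re}\varpi_{\mathcal O}$ and put $B:=\operatorname{Re}\varpi_{\mathcal O}+\beta\wedge\mathfrak j$; then $d_D B=0$, i.e.\ $B\in Z^2_{L_{\mathcal O}}$. Finally $(I^!_{\mathcal O}\mathfrak L)^{-B}$ has $p_D=C_{\mathcal O}\otimes\mathbbm C$ and induced $2$-form $\varpi_{\mathcal O}-\operatorname{Re}\varpi_{\mathcal O}=\mathrm i\,\Omega_{\mathcal O}$, i.e.\ exactly the $p_D$-image and the $2$-form of $\mathfrak L_{\mathcal O}$ (whose induced $2$-form is $\mathrm i\,\Omega_{\mathcal O}$ by~(\ref{eq:L_O}) and Remark~\ref{rem:impi}); by the uniqueness encoded in~(\ref{eq:L_pi}) this yields $(I^!_{\mathcal O}\mathfrak L)^{-B}=\mathfrak L_{\mathcal O}$, i.e.\ $I^!_{\mathcal O}\mathfrak L=\mathfrak L_{\mathcal O}^{B}$. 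The word ``locally'' in the statement is precisely the price of the $d^\nabla$-Poincar\'e lemma, $\operatorname{Re}\varpi_{\mathcal O}$ need not be globally $d^\nabla$-exact.

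I expect the parity argument in the middle paragraph to be the main obstacle: one must exclude that the backward image is ``too large'', i.e.\ the graph of an honest complex Atiyah $2$-form, and the oddness of $\operatorname{rank}DL_{\mathcal O}=\dim\mathcal O+1$ is the one genuinely new input with respect to the contact-leaf case. Checking that involutivity of $I^!_{\mathcal O}\mathfrak L$ really unwinds to $d^\nabla\varpi_{\mathcal O}=0$, and the bookkeeping with the various $2$-forms in the last step, are where the remaining (routine) care is needed.
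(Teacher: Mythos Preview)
Your proof is correct and tracks the paper's argument closely in the regularity step, the gluing argument for the intersection, and the $B$-field construction (your $B=\operatorname{Re}\varpi_{\mathcal O}+\beta\wedge\mathfrak j$ with $d^\nabla\beta=-\operatorname{Re}\varpi_{\mathcal O}$ is exactly the paper's $B=d_D\eta=\operatorname{Re}\varpi_{\mathcal O}+\mathcal C\wedge\eta$ with $\eta=-\beta$, once one notes $\mathcal C=\mathfrak j$ in the chosen trivialization).

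The one genuine difference is how you obtain $p_D I^!_{\mathcal O}\mathfrak L\subseteq C_{\mathcal O}\otimes\mathbbm C$. You run the gluing argument first to get $p_D(I^!_{\mathcal O}\mathfrak L\cap\overline{I^!_{\mathcal O}\mathfrak L})=0$, and then argue by contradiction via parity: were the image full, $I^!_{\mathcal O}\mathfrak L$ would be the graph of a complex Atiyah $2$-form whose imaginary part would have to be non-degenerate on the odd-rank bundle $DL_{\mathcal O}$. The paper instead reuses directly the fact $\mathbb 1\notin p_D\mathfrak L|_{\mathcal O}$, which you already established for the clean intersection condition: any $\Delta\in p_D I^!_{\mathcal O}\mathfrak L$ lies in $DL_{\mathcal O}\otimes\mathbbm C$ and in $p_D\mathfrak L|_{\mathcal O}$, so writing $\Delta=\Delta_0+z\,\mathbb 1$ with $\Delta_0\in C_{\mathcal O}\otimes\mathbbm C\subseteq p_D\mathfrak L|_{\mathcal O}$ gives $z\,\mathbb 1\in p_D\mathfrak L|_{\mathcal O}$, whence $z=0$. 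This is shorter and lets the paper postpone the gluing argument to the computation of the intersection. Your parity route is a valid alternative and has the virtue of pinpointing exactly where the evenness of $\dim\mathcal O$ is used, but the paper's direct argument is more economical.
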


\begin{proof}
First of all, we prove that $I^!_{\mathcal O} \mathfrak L \subset \mathbbm D L_{\mathcal O}\otimes \mathbbm C$ is a regular subbundle. As usual, we check the clean intersection condition:
\[
\operatorname{rank}_{\mathbbm C} \left(p_D \mathfrak L|_{\mathcal O} + DL_{\mathcal O} \otimes \mathbbm C \right) = \mathrm{constant}.
\]
So notice that, in this case
\[
DL_{\mathcal O} \otimes \mathbbm C = \langle \mathbb 1 \rangle \oplus C_{\mathcal O} =\langle \mathbb 1 \rangle \oplus \operatorname{im} J^\sharp|_{\mathcal O} \otimes \mathbbm C \subset \langle \mathbb 1 \rangle + p_D \mathfrak L |_{\mathcal O}.
\]
But $\mathbb 1 \notin p_D \mathfrak L|_{\mathcal O}$, otherwise, from 
$\mathbb 1 = \overline{\mathbb 1}$, and $\operatorname{im} J^\sharp \otimes \mathbbm C = p_D \mathfrak L \cap p_D \overline{\mathfrak L}$ 
we would get $\mathbb 1 \in C_{\mathcal O}$ 
which is not the case. We conclude that
\[
p_D \mathfrak L|_{\mathcal O} + D L_{\mathcal O} \otimes \mathbbm C = \langle \mathbb 1 \rangle \oplus p_D \mathfrak L|_{\mathcal O}
\]
which is constant rank in the same way as for contact leaves (see the proof of Proposition \ref{prop:contact_leaf}). So $I^!_{\mathcal O} \mathfrak L$ is a Dirac-Jacobi structure on $L_{\mathcal O}$.

Next we show that
\begin{equation}\label{eq:equation_1}
p_D I^!_{\mathcal O} \mathfrak L = p_D I^!_{\mathcal O} \overline{\mathfrak L} = C_{\mathcal O} \otimes \mathbbm C.
\end{equation}
We will get, in particular, properties (2) and (3) in the statement.
From $p_D I^!_{\mathcal O} \mathfrak L = D L_{\mathcal O} \cap p_D \mathfrak L$, and $p_D \mathfrak L \cap p_D \overline{\mathfrak L} = \operatorname{im} J^\sharp \otimes \mathbbm C$ we get $C_{\mathcal O} \otimes \mathbbm C \subset p_D I^!_{\mathcal O} \mathfrak L \cap p_D I^!_{\mathcal O} \overline{\mathfrak L}$. Now let $(\Delta, \psi) \in I^!_{\mathcal O} \mathfrak L$, so that $\Delta \in p_D I^!_{\mathcal O}\mathfrak L$. In particular, $\Delta \in D L_{\mathcal O} \otimes \mathbbm C$, meaning that $\Delta = \Delta_0 + z \cdot \mathbb 1$ for some $\Delta_0 \in C_{\mathcal O}\otimes \mathbbm C$, and some $z \in \mathbbm C$. It follows that $\Delta - \Delta_0 \in p_D \mathfrak L|_{\mathcal O}$. As $\mathbb 1 \notin p_D \mathfrak L|_{\mathcal O}$, we have $z = 0$, and $\Delta = \Delta_0 \in C_{\mathcal O}\otimes \mathbbm C$. So $p_D I^!_{\mathcal O} \mathfrak L \subset C_{\mathcal O}\otimes \mathbbm C$, and, similarly, $p_D I^!_{\mathcal O} \overline{\mathfrak L} \subset C_{\mathcal O}\otimes \mathbbm C$.

Now we show that 
\[
I^!_{\mathcal O} \mathfrak L \cap I^!_{\mathcal O} \overline{\mathfrak L} = \mathsf{Ann} (C_{\mathcal O} \otimes \mathbbm C).
\]
We will get, in particular, property (1) in the statement.
So let $(\Delta, \psi) \in (I^!_{\mathcal O} \mathfrak L \cap I^!_{\mathcal O} \overline{\mathfrak L} )_x$ for some $x \in \mathcal O$. This means that there exist $\chi', \chi''$ as in the proof of Proposition (\ref{prop:contact_leaf}), and we can even construct $\chi$ exactly as there. As $\mathfrak L$ is of generalized contact type, actually $(\Delta, \chi) = 0$, i.e.~$\Delta = 0$, and $\psi \in \mathsf{Ann} (C_{\mathcal O} \otimes \mathbbm C)$. This shows that $I^!_{\mathcal O} \mathfrak L \cap I^!_{\mathcal O} \overline{\mathfrak L} \subset \mathsf{Ann} (C_{\mathcal O} \otimes \mathbbm C) \subset J^1 L_{\mathcal O} \otimes \mathbbm C$. The reverse inclusion $\mathsf{Ann} (C_{\mathcal O} \otimes \mathbbm C) \subset I^!_{\mathcal O} \mathfrak L \cap I^!_{\mathcal O} \overline{\mathfrak L}$ immediately follows from (\ref{eq:equation_1}). 

It remains to show that, locally, $I^!_{\mathcal O} \mathfrak L$ is a $B$-field transformation of 
\begin{equation}\label{eq:L_02}
\mathfrak L_{\mathcal O} = \left\{ (J^\sharp_{\mathcal O} (\psi), \mathrm{i} \psi) \in \mathbbm D L_{\mathcal O} \otimes \mathbbm C: \psi \in J^1 L_{\mathcal O} \otimes \mathbbm C \right\}.
\end{equation}
To do this, denote by $\varpi_{\mathcal O}$ the $L_{\mathcal O}$-valued $2$-form induced by $I^!_{\mathcal O} \mathfrak L$ on $p_D I^!_{\mathcal O} \mathfrak L = C_{\mathcal O} \otimes \mathbbm C$. We can extend $\varpi_{\mathcal O}$ to a genuine Atiyah $2$-form on $L_{\mathcal O}$, by putting $\iota_{\mathbb 1} \varpi_{\mathcal O} = 0$. Similarly as in the case of a contact leaf, $\varpi_{\mathcal O}$ actually agrees with $\varpi$ on $C_{\mathcal O}$. It follows that the imaginary part $\operatorname{Im} \varpi_{\mathcal O}$ agrees with the lcs form $\Omega_{\mathcal O}$. From (\ref{eq:L_pi}) we get
\begin{equation}\label{eq:IL_2}
I^!_{\mathcal O} \mathfrak L = \left\{ (\Delta, \iota_\Delta \varpi_{\mathcal O} + A) : \Delta \in C_{\mathcal O} \otimes \mathbbm C \text{ and } A \in \mathsf{Ann} C_{\mathcal O}  \right\}.
\end{equation}
Using $C_{\mathcal O} \cong T\mathcal O$, we can also think of $\varpi_{\mathcal O}$ as an $L_{\mathcal O} \otimes \mathbbm C$-valued $2$-form on $\mathcal O$. Then, if we denote by $\nabla : T \mathcal O \to D L_{\mathcal O}$ the flat connection in $L_{\mathcal O}$ whose image is $C_{\mathcal O}$, from (\ref{eq:IL_2}) and involutivity, we get $d^\nabla \varpi_{\mathcal O} = 0$. In particular, $d^\nabla \operatorname{Re} \varpi_{\mathcal O} = 0$, and, locally, $\operatorname{Re} \varpi_{\mathcal O} = d^\nabla \eta$, for some $\eta \in \Omega^1 (\mathcal O, L_{\mathcal O}) \subset \Gamma (J^1 L_{\mathcal O})$. Put $B := d_D \eta $. An easy computation shows that
\[
B = \operatorname{Re} \varpi_{\mathcal O} + \mathcal C \wedge \eta
\]
where $\mathcal C : DL_{\mathcal O} \to \mathbbm R_{\mathcal O}$ is the unique $1$-form with kernel $C_{\mathcal O}$, and such that $\langle \mathcal C , \mathbb 1 \rangle = 1$. Hence
\[
(I^!_{\mathcal O} \mathfrak L)^{-B} 
 =  \left\{ (\Delta, \mathrm{i} \cdot \iota_\Delta \operatorname{Im} \varpi_{\mathcal O} + A ) : \Delta \in C_{\mathcal O} \otimes \mathbbm C \text{ and } A \in \mathsf{Ann} C_{\mathcal O} \right\} = \mathfrak L_{\mathcal O}.
\]
For the very last step we used (\ref{eq:L_02}), and the fact that $\operatorname{Im} \varpi_{\mathcal O} = \Omega_{\mathcal O}$ (together with the relationship between $\Omega_\mathcal O$ and $J_{\mathcal O}$ discussed at the beginning of this subsection).
\end{proof}

\begin{proposition}\label{prop:lcs_transversal}
Let $N$ be a transversal at $x_0 \in \mathcal O$ to an even dimensional leaf $\mathcal O$ of $J$. Around $x_0$, the backward image of the complex Dirac-Jacobi structure $\mathfrak L$ along the embedding $I_N : L_N \hookrightarrow L$ is a complex Dirac-Jacobi structure $I^!_N \mathfrak L$ of generalized contact type.
\end{proposition}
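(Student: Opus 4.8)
The plan is to run the same argument as in the proof of Proposition~\ref{prop:contact_transversal}, but now transversal to the (even-dimensional) leaf $\mathcal{O}$: first establish that $I^!_N\mathfrak{L}$ is a genuine complex Dirac-Jacobi structure near $x_0$, and then show that its intersection with the complex conjugate vanishes — which is precisely the condition that it be of generalized contact type.

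\textbf{Regularity.} First I would verify the clean intersection condition~(\ref{eq:cic}) for the embedding $I_N$, i.e.\ that $p_D\mathfrak{L}|_N+DL_N\otimes\mathbbm{C}$ has constant rank near $x_0$. As this is a smooth (a priori non-regular) subbundle of $(DL)|_N\otimes\mathbbm{C}$ and pointwise rank is lower semicontinuous, it is enough to check that it equals $(DL)|_N\otimes\mathbbm{C}$ at $x_0$ (it then stays maximal, hence constant, nearby). By Lemma~\ref{lem:p_DLcap}, $p_D\mathfrak{L}_{x_0}\supseteq(p_D\mathfrak{L}\cap p_D\overline{\mathfrak{L}})_{x_0}=(\operatorname{im}J^\sharp)_{x_0}=C_{\mathcal{O},x_0}$, whose symbol is $T_{x_0}\mathcal{O}$; and $D_{x_0}L_N$ has symbol $T_{x_0}N$ and contains $\mathbb{1}_{x_0}$. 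Since $T_{x_0}M=T_{x_0}N\oplus T_{x_0}\mathcal{O}$, it follows that $C_{\mathcal{O},x_0}+D_{x_0}L_N=D_{x_0}L$, hence $p_D\mathfrak{L}_{x_0}+D_{x_0}L_N\otimes\mathbbm{C}=D_{x_0}L\otimes\mathbbm{C}$. Therefore $I^!_N\mathfrak{L}$ is a complex Dirac-Jacobi structure on $L_N$ in a neighborhood of $x_0$.

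\textbf{Vanishing of $I^!_N\mathfrak{L}\cap\overline{I^!_N\mathfrak{L}}$.} This intersection equals $I^!_N\mathfrak{L}\cap I^!_N\overline{\mathfrak{L}}$ and, being conjugation-invariant, it is the complexification of its real part $R$. I would then carry over, without change, the exact-sequence machinery of the proof of Proposition~\ref{prop:contact_transversal}: introduce $\mu:\wedge^2(\nu^*N\otimes L_N)\to L_N$, $(\eta,\theta)\mapsto\langle J^\sharp\eta,\theta\rangle$, with associated map $\mu^\sharp:\nu^*N\otimes L_N\to\nu N$ ($J^\sharp$ followed by the symbol and the projection $TM|_N\to\nu N$); use the neighborhood equality from the first step to see that the jet $\chi$ lifting an element of $R$ is unique, so that $\kappa\colon R\to\nu^*N\otimes L_N$, $\kappa(\Delta,\psi):=\operatorname{Im}\chi$, is well defined; and, using formula~(\ref{eq:Delta,J,psi}) (which only depends on $\mathbbm{K}$) and $\mathfrak{L}\cap\overline{\mathfrak{L}}=0$, obtain the pointwise exact sequence
\[
0\longrightarrow R\overset{\kappa}{\longrightarrow}\nu^*N\otimes L_N\overset{\mu^\sharp}{\longrightarrow}\nu N .
\]
The only step that differs from the contact-leaf case is the computation of $\ker\mu^\sharp_{x_0}$. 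Identifying $\nu^*_{x_0}N\otimes L_{x_0}$ with $T^*_{x_0}\mathcal{O}\otimes L_{\mathcal{O},x_0}\subset J^1 L_{\mathcal{O}}$, an element $\eta$ lies in $\ker\mu^\sharp_{x_0}$ iff $J^\sharp_{x_0}\eta\in D_{x_0}L_N$; since $J^\sharp_{x_0}\eta\in C_{\mathcal{O},x_0}$, whose symbol meets $T_{x_0}N$ trivially, this forces $\sigma(J^\sharp_{x_0}\eta)=0$, hence $J^\sharp_{\mathcal{O},x_0}\eta\in\langle\mathbb{1}_{x_0}\rangle\cap C_{\mathcal{O},x_0}=0$, using $\mathbb{1}_{x_0}\notin C_{\mathcal{O},x_0}$. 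But $J^\sharp_{\mathcal{O}}$ is injective on $T^*\mathcal{O}\otimes L_{\mathcal{O}}$, since there it represents the non-degenerate lcs form $\Omega_{\mathcal{O}}$ (equivalently, $\omega_J$ on $C_{\mathcal{O}}$; cf.\ Remark~\ref{rem:impi} and the discussion opening this subsection). Hence $\eta=0$, i.e.\ $\ker\mu^\sharp_{x_0}=0$, so $\mu^\sharp_{x_0}$ is an isomorphism of bundles of equal rank $\dim\mathcal{O}$; by semicontinuity $\mu^\sharp$ has maximal constant rank near $x_0$, so $\ker\mu^\sharp=0$ and $R=0$ there. Thus $I^!_N\mathfrak{L}\cap\overline{I^!_N\mathfrak{L}}=0$ around $x_0$, i.e.\ $I^!_N\mathfrak{L}$ is of generalized contact type.

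\textbf{Main obstacle.} The proof is essentially a re-run of Proposition~\ref{prop:contact_transversal}; the work lies in (i) checking that the definition and injectivity of $\kappa$ and the identity $\operatorname{im}\kappa=\ker\mu^\sharp$ go through unchanged — they do, as they only use that $\mathfrak{L}$ is the $+\mathrm{i}$-eigenbundle of a generalized contact structure together with the neighborhood equality of the first step — and (ii) in pinning down the one genuinely new computation $\ker\mu^\sharp_{x_0}=0$. It is exactly here that the even-dimensionality of $\mathcal{O}$ enters decisively — through $\mathbb{1}\notin C_{\mathcal{O}}$ and the non-degeneracy of the lcs form $\Omega_{\mathcal{O}}$ — replacing the one-dimensional kernel $\langle(\theta_{\mathcal{O}})_{x_0}\rangle$ found in the contact case by the trivial one, which is precisely what upgrades ``homogeneous generalized complex type'' to ``generalized contact type''.
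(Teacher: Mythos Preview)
Your proposal is correct, but it takes a more elaborate route than the paper. You transplant the full exact-sequence machinery from Proposition~\ref{prop:contact_transversal} (the map $\kappa$, the skew form $\mu$, and the sequence $0\to R\to\nu^*N\otimes L_N\to\nu N$) and then identify the single new computation as $\ker\mu^\sharp_{x_0}=0$, whence $R=0$ near $x_0$. This is valid, and it has the virtue of presenting the odd and even cases as two instances of one lemma, with the parity of $\dim\mathcal O$ visibly controlling $\dim\ker\mu^\sharp_{x_0}$.

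The paper bypasses the exact sequence entirely and argues directly at $x_0$: for $(\Delta,\psi)\in(I^!_N\mathfrak L\cap I^!_N\overline{\mathfrak L})_{x_0}$ one has $\Delta\in p_D\mathfrak L_{x_0}\cap p_D\overline{\mathfrak L}_{x_0}\cap D_{x_0}L_N\otimes\mathbbm C=\bigl((C_{\mathcal O})_{x_0}\cap D_{x_0}L_N\bigr)\otimes\mathbbm C=0$ (the symbols meet trivially and $\mathbb 1\notin C_{\mathcal O}$); the lifts $\chi',\chi''$ of $\psi$ to $\mathfrak L,\overline{\mathfrak L}$ then satisfy $\chi'-\chi''\in\mathsf{Ann}\bigl((C_{\mathcal O})_{x_0}+D_{x_0}L_N\bigr)\otimes\mathbbm C=0$, so $(0,\chi')\in\mathfrak L\cap\overline{\mathfrak L}=0$ and $\psi=0$. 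This is shorter and avoids re-verifying the well-definedness of $\kappa$ and the equality $\operatorname{im}\kappa=\ker\mu^\sharp$; your approach, in exchange, makes the structural parallel with Proposition~\ref{prop:contact_transversal} completely transparent.
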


\begin{proof}
One can prove that $I^!_N \mathfrak L \subset \mathbbm D L_N$ is a regular subbundle in a very similar way as for the transversal to a contact leaf (proof of Proposition \ref{prop:contact_transversal}) and we leave it to the reader to take care of the obvious adaptations. Now, we show that
\[
\left( I^!_N \mathfrak L \cap I^!_N \overline{\mathfrak L} \right)_{x_0} = 0.
\]
It will follow that $I^!_N \mathfrak L \cap I^!_N \overline{\mathfrak L} = 0$ in a whole neighborhood of $x_0$. So let $(\Delta, \psi) \in (I^!_N \mathfrak L \cap I^!_N \overline{\mathfrak L})_{x_0}$. Then 
\[
\Delta \in p_D \mathfrak L_{x_0} \cap p_D \overline{\mathfrak L}_{x_0} \cap  D_{x_0} L_N \otimes \mathbbm C= ((C_{\mathcal O})_{x_0} \cap D_{x_0} L_N) \otimes \mathbbm C = 0,
\]
and we find $\chi', \chi'' \in J_{x_0}^1 L \otimes \mathbbm C$, such that $(0, \chi') \in \mathfrak L_{x_0}$ (i.e.~$\chi' \in \mathsf{Ann} (p_D \mathfrak L_{x_0})$), $(0, \chi'') \in \overline{\mathfrak L}_{x_0}$ (i.e.~$\chi'' \in \mathsf{Ann} (p_D \overline{\mathfrak L}_{x_0})$), and, additionally, $\psi = I^\ast_N \chi' = I^\ast_N \chi''$. Hence 
\[
\begin{aligned}
\chi' - \chi'' & \in  \mathsf{Ann} \left(p_D \mathfrak L_{x_0} \cap p_D \overline{\mathfrak L}_{x_0}\right) \cap \mathsf{Ann} \left(D_{x_0} L_N \otimes \mathbbm C\right) \\
 & = \mathsf{Ann} \left( (C_{\mathcal O})_{x_0} + D_{x_0} L_N\right) \otimes \mathbbm C = 0.
\end{aligned}
\]
It follows that $(0, \chi') = (0, \chi'') \in \mathfrak L_{x_0} \cap \overline{\mathfrak L}_{x_0} = 0$, so that $\psi = 0$ as well. This concludes the proof.
\end{proof}

\section{Splitting theorems} \label{Sec:splitting}

In this section we prove a local splitting theorem for generalized contact bundles analogous to Weinstein splitting theorem for Poisson structures \cite{W1983}, and similar splitting theorems in Poisson-related geometries: Jacobi geometry \cite{DLM1991}, Dirac geometry \cite{B2014} (see also \cite{DW2008}), Lie algebroid geometry \cite{D2001, F2002, W2000}, generalized complex geometry \cite{AB2006} (see also \cite{Ba2013} for an important refinement of Abouzaid-Boyarchenko result). As expected, our splitting theorem is similar to that for generalized complex manifolds on one side, and to that for Jacobi bundles on the other side. In particular, we actually prove two splitting theorems: one about the local structure around a point in a contact leaf and one about the local structure around a point in a lcs leaf. Our proof is different in spirit from that of Abouzaid and Boyarchenko, and it is rather inspired by the recent work of Bursztyn, Lima and Meinrenken \cite{BLM2016}, who provided a unified approach to splitting theorems in Poisson (and related) geometries.

We begin recalling the splitting theorems of Dazord, Lichnerowicz and Marle for Jacobi bundles \cite{DLM1991}. 

\begin{theorem}\label{theor:Dazord_contact}
Let $(L \to M, J)$ be a Jacobi bundle, and let $N$ be a sufficiently small transversal at $x_0 \in \mathcal O$ to a $(2d + 1)$-dimensional characteristic leaf $\mathcal O$ of $J$. Then, there are
\begin{itemize}
\item[$\triangleright$] a homogenous Poisson structure $(\pi_N, Z_N)$ on $N$, 
\item[$\triangleright$] an open neighborhood $V$ of $0$ in $\mathbbm R^{2d +1}$, and
\item[$\triangleright$] a line bundle isomorphism $\Phi : L \to \mathbbm R_{N \times V}$, covering a diffeomorphism $\phi : M \to N \times V$, locally defined around $x_0$,
\end{itemize}
such that
\begin{enumerate}
\item $\phi$ identifies $N$ with $N \times \{0\}$, and (a neighborhood of $x_0$ in) $\mathcal O$ with $\{x_0\} \times V$,
\item $\Phi$ identifies $J$ with the Jacobi structure $J^\times$ corresponding to the Jacobi pair $(\Lambda^\times, E^\times)$ given by 
\end{enumerate}
\begin{equation}\label{eq:product_contact}
\Lambda^\times = \Lambda_{\mathit{can}} + \pi_N - E_{\mathit{can}} \wedge Z_N, \quad \text{and} \quad E^\times = E_{\mathit{can}},
\end{equation}
where $(\Lambda_{\mathit{can}}, E_{\mathit{can}})$ is the Jacobi pair from Example \ref{ex:J_can}.
\end{theorem}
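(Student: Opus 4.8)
The plan is to reduce everything to the trivial line bundle and then run a Moser-type deformation argument in the spirit of Bursztyn--Lima--Meinrenken \cite{BLM2016}, exhibiting the Jacobi structure as a tubular-neighbourhood product. First I would use locality to pick a nowhere vanishing section of $L$ near $x_0$, trivializing $L \cong \mathbbm R_M$ so that $J$ becomes a Jacobi pair $(\Lambda, E)$; rescaling the chosen section rescales the transverse data $(\pi_N, Z_N)$ in the expected conformal way, which is the ``choice'' implicit in the statement. Next I would put the structure induced on the leaf into normal form: since $\mathcal O$ is a $(2d+1)$-dimensional contact manifold, the contact Darboux theorem provides coordinates $(x^i, p_i, u)$ near $x_0$ in $\mathcal O$ in which the induced Jacobi structure is precisely $(\Lambda_{\mathit{can}}, E_{\mathit{can}})$ from Example \ref{ex:J_can}.

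Then I would fix a transversal $N$ at $x_0$ and recall (cf. the discussion of transversals to contact leaves above, and \cite{DLM1991}) that the restricted line bundle over $N$ carries a canonical homogeneous Poisson structure $(\pi_N, Z_N)$ vanishing at $x_0$; this is the candidate second factor. The core step, following \cite{BLM2016}, is to construct on a neighbourhood of $x_0$ a vector field $\mathcal E$ vanishing along $N$, with linearization the identity on the normal bundle $\nu N$, and compatible with $J$ in the appropriate infinitesimal sense (an Euler-like section for the Dirac-Jacobi structure $\operatorname{\mathsf{graph}} J$). Its flow realizes a diffeomorphism of a neighbourhood of $x_0$ onto an open set $V$ of $\nu N \cong N \times \mathbbm R^{2d+1}$ carrying $N$ to $N \times \{0\}$ and a neighbourhood of $x_0$ in $\mathcal O$ to $\{x_0\} \times V$; transporting $(\Lambda, E)$ and interpolating linearly with the product pair, a Moser argument, whose cohomological step is trivial thanks to the contact non-degeneracy along the leaf, identifies $J$ with the product structure. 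It then remains to check that this product pair is exactly $(\Lambda_{\mathit{can}} + \pi_N - E_{\mathit{can}} \wedge Z_N,\ E_{\mathit{can}})$: the cross-term $- E_{\mathit{can}} \wedge Z_N$ is forced, since $E_{\mathit{can}} = \partial/\partial u$ is the Reeb direction of the contact factor and $Z_N$ the homogeneity field on $N$, and the compatibility equations $[\Lambda^\times, \Lambda^\times]^{SN} = 2 E^\times \wedge \Lambda^\times$ and $[E^\times, \Lambda^\times]^{SN} = 0$, together with $\mathcal L_{Z_N}\pi_N = -\pi_N$, leave no other option.

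I expect the main obstacle to be the construction of the compatible Euler-like vector field $\mathcal E$ (the analogue of the fake moment map of \cite{BLM2016}) and arranging the Moser argument so that it lands on \emph{exactly} the prescribed product with its cross-term, rather than merely on some product. An alternative route I would keep in reserve is to pass to the Poissonization $\widetilde M = L^\ast \smallsetminus 0$, which carries a homogeneous Poisson structure whose homogeneity vector field generates the fibrewise $\mathbbm R^\times$-action, apply the $\mathbbm R^\times$-equivariant version of the Weinstein splitting theorem \cite{W1983} proved in \cite{BLM2016} around a point over $x_0$, and descend; there the obstacle is shifted to verifying that the equivariant product upstairs descends to the homogeneous product $J^\times$ downstairs.
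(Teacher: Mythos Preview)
The paper does not actually prove this theorem. Theorem~\ref{theor:Dazord_contact} (together with its companion Theorem~\ref{theor:Dazord_lcs}) is stated at the beginning of Section~\ref{Sec:splitting} with the sentence ``We begin recalling the splitting theorems of Dazord, Lichnerowicz and Marle for Jacobi bundles \cite{DLM1991},'' and no proof is given; the result is simply quoted from \cite{DLM1991} as input for the paper's own splitting theorems (Theorems~\ref{theor:splitting_contact} and~\ref{theor:splitting_lcs}). So there is no ``paper's own proof'' to compare your proposal against.

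That said, your two suggested routes are both reasonable strategies for establishing the result independently. The BLM-style argument you outline is in the spirit of what the paper itself uses later for generalized contact structures, and is exactly the approach taken in \cite{S20XX} (cited in the introduction) to reprove and extend the Dazord--Lichnerowicz--Marle theorems. Your alternative via Poissonization---passing to $\widetilde M = L^\ast \smallsetminus 0$, applying the $\mathbbm R^\times$-equivariant Weinstein splitting, and descending---is also standard and closer in flavour to the original arguments in \cite{DLM1991}; the paper uses this homogenization trick in Appendix~\ref{appendix} for a different purpose. Either route would work, but neither is what the paper does here: it simply cites the result.
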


\begin{remark}
Formula (\ref{eq:product_contact}) has a nice interpretation in terms of Dirac-Jacobi structures. Namely, let $\mathfrak L_{J_{\mathit{can}}} = \operatorname{\mathsf{graph}} J_{\mathit{can}} \subset \mathbbm D \mathbbm R_V$ be the Dirac-Jacobi structure induced by $J_{\mathit{can}}$ on the trivial line bundle, and let $\mathfrak L_N \subset \mathbbm D \mathbbm R_{N}$ be the Dirac-Jacobi structure spanned as follows:
\[
\mathfrak L_N = \left\langle (1-Z_N, 0), (\pi_N^\sharp \eta, \eta + \eta (Z_N) \cdot \mathfrak j ) : \eta \in T^\ast N \right\rangle .
\]
Additionally, let $\mathfrak L^\times = \operatorname{\mathsf{graph}} J^\times \subset \mathbbm D \mathbbm R_{N \times V}$. Then $\mathfrak L^\times$ is the flat product of $\mathfrak L_N$ and $\mathfrak L_{J_{\mathit{can}}}$ with respect to the standard projections $\mathbbm R_{N \times V} \to \mathbbm R_N$, and $\mathbbm R_{N \times V} \to \mathbbm R_V$:
\[
\mathfrak L^\times = \mathfrak L_N \times^! \mathfrak L_{J_{\mathit{can}}}.
\]
 Notice that $\mathfrak{L}_N=\phi|_N^!\mathfrak{L}^\times$. We stress that it is not a coincidence that $\phi|_N^!\mathfrak{L}^\times$ is a Dirac structure coming from a homogeneous Poisson structure. It is proven in \cite{DLM1991} that every transversal to a contact leaf of a Jacobi structure possesses, at least locally, a homogeneous Poisson structure and that different transversals possess isomorphic homogeneous Poisson structures. A similar statement holds for Dirac-Jacobi structures \cite[Proposition 6.9]{V2015}. In a similar way, every transversal to a lcs leaf possesses a Jacobi structure (see \cite{DLM1991} again, and \cite[Proposition 6.9]{V2015} for the Dirac-Jacobi case).
\end{remark}

\begin{theorem}\label{theor:Dazord_lcs}
Let $(L \to M, J)$ be a Jacobi bundle, and let $N$ be a sufficiently small transversal at $x_0 \in \mathcal O$ to a $2d$-dimensional characteristic leaf $\mathcal O$ of $J$. Then, there are
\begin{itemize}
\item[$\triangleright$] a Jacobi pair $(\Lambda_N, E_N)$ on $N$, 
\item[$\triangleright$] an open neighborhood $V$ of $0$ in $\mathbbm R^{2d}$, and
\item[$\triangleright$] a line bundle isomorphism $\Phi : L \to \mathbbm R_{N \times V}$, covering a diffeomorphism $\phi : M \to N \times V$, locally defined around $x_0$,
\end{itemize}
such that
\begin{enumerate}
\item $\phi$ identifies $N$ with $N \times \{0\}$, and (a neighborhood of $x_0$ in) $\mathcal O$ with $\{x_0\} \times V$,
\item $\Phi$ identifies $J$ with the Jacobi structure $J^\times$ corresponding to the Jacobi pair $(\Lambda^\times, E^\times)$ given by 
\end{enumerate}
\begin{equation}\label{eq:product_lcs}
\Lambda^\times = \Lambda_{N} + \pi_{\mathit{can}} - E_{N} \wedge Z_{\mathit{can}}, \quad \text{and} \quad E^\times = E_{N}, 
\end{equation}
where $(\pi_{\mathit{can}}, Z_{\mathit{can}})$ is the homogeneous Poisson structure from Example \ref{ex:can_hP}.
\end{theorem}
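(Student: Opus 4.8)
The statement is the locally conformal symplectic case of the splitting theorem of Dazord, Lichnerowicz and Marle \cite{DLM1991}, and the plan is to adapt Weinstein's proof of the splitting theorem for Poisson manifolds \cite{W1983} to Jacobi bundles. First I would work locally and trivialize: around $x_0$ pick a trivialization $L\cong\mathbbm R_U$, so that $J$ becomes a Jacobi pair $(\Lambda,E)$. The hypothesis that the leaf $\mathcal O$ through $x_0$ is even-dimensional, $\dim\mathcal O=2d$, is equivalent to $E_{x_0}\in\operatorname{im}\Lambda^\sharp_{x_0}$ (for odd leaves $E_{x_0}\notin\operatorname{im}\Lambda^\sharp_{x_0}$, which is precisely what separates the present statement from the companion Theorem \ref{theor:Dazord_contact}), and it forces $\operatorname{rank}\Lambda_{x_0}=2d$. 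Since $E_{x_0}\in\operatorname{im}\Lambda^\sharp_{x_0}$, after a suitable conformal change of the Jacobi pair, i.e. a change of trivialization, I may moreover assume $E_{x_0}=0$.

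With $E_{x_0}=0$ and $\operatorname{rank}\Lambda_{x_0}=2d$, I would run the usual Weinstein induction to build half of a Darboux chart transverse to $N$. Pick $p_1$ vanishing on $N$ with the Hamiltonian derivation $X_{p_1}:=\{p_1,-\}$ nonzero at $x_0$ (possible because $\Lambda_{x_0}\neq0$); since $[X_f,X_g]=X_{\{f,g\}}$, the commuting flows of suitable $X_{p_i}$ can be used, exactly as in the Poisson case, to produce $x^1,\dots,x^d,p_1,\dots,p_d$ vanishing on $N$ with $\{p_i,p_j\}=\{x^i,x^j\}=0$ and $\{p_i,x^j\}=\delta_i^j$; the only new feature is the zeroth-order part $(f,g)\mapsto E(f)g-fE(g)$ of the Jacobi bracket, which is subleading near $x_0$ precisely because $E_{x_0}=0$. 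Completing $(x^i,p_i)$ by coordinates $(y^a)$ pulled back from $N$ along a retraction, one reads off $\Lambda$ and $E$: the $(x^i,p_i)$-block of $\Lambda$ is $\pi_{\mathit{can}}=\partial_{p_i}\wedge\partial_{x^i}$ by the bracket relations, the mixing terms assemble into $-E_N\wedge Z_{\mathit{can}}$ with $Z_{\mathit{can}}=p_i\partial_{p_i}$, and the part intrinsic to $N$ is a Jacobi pair $(\Lambda_N,E_N)$ recovered as the backward image of $\mathfrak L^\times=\operatorname{\mathsf{graph}}J^\times$ along $N\hookrightarrow N\times V$, hence equal to the canonical Jacobi structure of the transversal, which vanishes at $x_0$ (cf. the discussion preceding Example \ref{ex:can_hP}). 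One also checks directly that (\ref{eq:product_lcs}) defines a Jacobi pair, the cross-terms of the Schouten brackets vanishing since the two groups of variables are disjoint and the remaining terms matching exactly because $\mathcal L_{Z_{\mathit{can}}}\pi_{\mathit{can}}=-\pi_{\mathit{can}}$, i.e. $(\pi_{\mathit{can}},Z_{\mathit{can}})$ is the homogeneous Poisson structure of Example \ref{ex:can_hP}. Finally $\phi$ is the chart just constructed, $\Phi$ the accompanying line bundle isomorphism, $V$ the image of the $(x^i,p_i)$-coordinates, and $\mathcal O$ is identified with $\{x_0\}\times V$ because in the normal form the characteristic distribution at $(x_0,v)$ is spanned by the $V$-directions.

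The step I expect to be the main obstacle is the Weinstein-type induction itself in the presence of the Reeb term $E$, and above all pinning down that the coupling between the $N$- and $V$-factors is exactly $-E_N\wedge Z_{\mathit{can}}$ with $Z_{\mathit{can}}$ a genuine homogeneity vector field for $\pi_{\mathit{can}}$, rather than an unstructured bilinear coupling: this is the Jacobi analogue of the bookkeeping behind Weinstein's theorem, and it is carried out in \cite{DLM1991}. A more conceptual alternative, closer to the methods used later in this paper, is to pass to the homogenization $\widetilde M=L^\ast\setminus 0$, which carries a Poisson structure homogeneous of degree $-1$ with respect to the Euler vector field $\mathcal E$ and whose symplectic leaf through a point over $x_0$ projects (locally diffeomorphically) onto $\mathcal O$, apply Weinstein's splitting theorem on $\widetilde M$ in a form compatible with the conformal $\mathbbm R^\times$-symmetry generated by $\mathcal E$, and push the splitting down to $M$; there $Z_{\mathit{can}}$ and the coupling term appear as the residual symplectic-factor component of $\mathcal E$ that cannot be absorbed into the transversal factor. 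The obstacle on this route is the equivariant refinement of Weinstein's theorem, which is exactly the sort of normal-form statement furnished by the Bursztyn--Lima--Meinrenken techniques \cite{BLM2016} adapted later in this paper.
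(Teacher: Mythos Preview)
The paper does not prove this theorem: it is stated as a known result of Dazord, Lichnerowicz and Marle and attributed to \cite{DLM1991}, exactly as you note in your first sentence, and is then used as input for the proof of Theorem \ref{theor:splitting_lcs}. So there is no ``paper's own proof'' to compare against; what you have written is a plausible outline of how the argument in \cite{DLM1991} goes.

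A couple of comments on the sketch itself. Your dichotomy $E_{x_0}\in\operatorname{im}\Lambda^\sharp_{x_0}$ versus $E_{x_0}\notin\operatorname{im}\Lambda^\sharp_{x_0}$ for even versus odd leaves is correct, and the conformal normalization $E_{x_0}=0$ is legitimate (under a change of trivialization by a nowhere-zero function $a$ with $a(x_0)=1$, the pair transforms so that the new $E$ at $x_0$ is $E_{x_0}-\Lambda^\sharp_{x_0}(da_{x_0})$). The genuine content, as you rightly flag, is the claim that the mixed part of $\Lambda$ in the resulting chart is exactly $-E_N\wedge Z_{\mathit{can}}$ with $Z_{\mathit{can}}=p_i\partial_{p_i}$: this is not a consequence of the Darboux-type relations $\{p_i,x^j\}=\delta_i^j$, $\{p_i,p_j\}=\{x^i,x^j\}=0$ alone, and in \cite{DLM1991} it comes from a careful choice of the transverse coordinates together with the Jacobi identity. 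Your sketch treats this step as bookkeeping, but it is the step.

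The alternative route you propose, via homogenization to a homogeneous Poisson structure on $\widetilde M=L^\ast\smallsetminus 0$ and an $\mathbbm R^\times$-equivariant Weinstein splitting, is indeed closer in spirit to the methods of this paper; a version of this is carried out in \cite{S20XX}, which the paper cites precisely for normal forms of Jacobi and Dirac--Jacobi structures around transversals.
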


\begin{remark}
Again, formula (\ref{eq:product_lcs}) has an interpretation in terms of Dirac-Jacobi structures. Namely, let $\mathfrak L_{(\pi_{\mathit{can}}, Z_{\mathit{can}})} \subset \mathbbm D \mathbbm R_{V}$ be the Dirac-Jacobi structure spanned as follows:
\[
\mathfrak L_{(\pi_{\mathit{can}}, Z_{\mathit{can}})}  = \left\langle (1-Z_{\mathit{can}}, 0), (\pi_{\mathit{can}}^\sharp \eta, \eta + \eta (Z_{\mathit{can}}) \cdot \mathfrak j ) : \eta \in T^\ast V \right\rangle
\]
and let $\mathfrak L_{N} = \operatorname{\mathsf{graph}} J_{N} \subset \mathbbm D \mathbbm R_N$ be the Dirac-Jacobi structure induced by $J_{N}$. Finally, let $\mathfrak L^\times = \operatorname{\mathsf{graph}} J^\times \subset \mathbbm D \mathbbm R_{N \times V}$. Then $\mathfrak L^\times$ is the flat product of $ \mathfrak L_{(\pi_{\mathit{can}}, Z_{\mathit{can}})}$ and $\mathfrak L_N$ with respect to the standard projections $\mathbbm R_{N \times V} \to \mathbbm R_N$, and $\mathbbm R_{N \times V} \to \mathbbm R_V$:
\[
\mathfrak L^\times = \mathfrak L_{(\pi_{\mathit{can}}, Z_{\mathit{can}})}\times^! \mathfrak L_N .
\]
\end{remark}

\subsection{Splitting around a contact point}

 We are finally ready to prove our main results. We begin with a remark.
 
 \begin{remark}\label{rem:can_odd}
The Jacobi structure $J_{\mathit{can}}$ from Example \ref{ex:J_can} is non degenerate. Hence it corresponds to a contact structure $H_{\mathit{can}}$. Namely, let $\omega_{\mathit{can}} = J_{\mathit{can}}^{-1}$ be the Atiyah $2$-form inverting $J_{\mathit{can}}$. Then 
\[
\omega_{\mathit{can}} = dx^i \wedge dp_i - (du - p_i dx^i) \wedge \mathfrak j,
\]
and $\iota_{\mathbb 1} \omega_{\mathit{can}}$ agrees with
\[
\theta_{\mathit{can}} = du - p_i dx^i,
\]
the canonical contact $1$-form on $\mathbbm R^{2d +1}$, and $H_{\mathit{can}} = \ker \theta_{\mathit{can}}$ is the canonical contact structure on $\mathbbm R^{2d +1}$. The latter can be equivalently encoded in a generalized contact structure
\begin{equation}\label{eq:I_can}
\left( 
\begin{array}{cc}
0 & J_{\mathit{can}}^\sharp \\
-(\omega_{\mathit{can}})_\flat & 0
\end{array}
\right)
\end{equation}
whose $+\mathrm{i}$-eigenbundle is
\[
\mathfrak L_{\mathit{can}}^{\mathit{odd}} = \left\{ (J^\sharp_{\mathit{can}} (\psi), \mathrm{i} \psi) : \psi \in J^1 \mathbbm R_{\mathbbm R^{2d + 1}} \otimes \mathbbm C \right\}.
\]
Clearly, we also have
\begin{equation}\label{eq:L_can_DR}
\mathfrak L_{\mathit{can}}^{\mathit{odd}} = \left\{ (\Delta, \mathrm{i} \cdot \iota_{\Delta}\omega_{\mathit{can}}) : \Delta \in D\mathbbm R_{\mathbbm R^{2d + 1}} \otimes \mathbbm C\right\} = (D\mathbbm R_{\mathbbm R^{2d + 1}} \otimes \mathbbm C)^{\mathrm{i} \omega_{\mathit{can}}},
\end{equation}
i.e.~$\mathfrak L_{\mathit{can}}^{\mathit{odd}} $ can be seen as the \emph{complex} $B$-field transformation of the complex Dirac-Jacobi structure $D\mathbbm R_{\mathbbm R^{2d + 1}} \otimes \mathbbm C \subset \mathbbm D \mathbbm R_{\mathbbm R^{2d + 1}}\otimes \mathbbm C$ by means of the closed \emph{complex} Atiyah $2$-form $\mathrm{i} \omega_{\mathit{can}}$. This simple remark will be useful below. Actually, similar considerations hold for any non-degenerate Jacobi structure. Notice, however, that (\ref{eq:L_can_DR}) does \emph{not} mean that there is a Courant-Jacobi automorphism intertwining (\ref{eq:I_can}) with some other generalized contact structure. Yet in other words, $D\mathbbm R_{\mathbbm R^{2d + 1}} \otimes \mathbbm C$ is \emph{not} a complex Dirac-Jacobi structure of generalized contact type, and the obvious reason is that only \emph{real} $B$-field transformations are Courant-Jacobi automorphisms, while $\mathrm{i} \omega_{\mathit{can}}$ is a purely imaginary Atiyah $2$-form.
 \end{remark}
 
 \begin{theorem}\label{theor:splitting_contact}
Let $(L \to M, \mathbbm K)$ be a generalized contact bundle, let $\mathfrak L \subset \mathbbm D L\otimes \mathbbm C$ be the $+\mathrm{i}$-eigenbundle of $\mathbbm K$, and let $N$ be a sufficiently small transversal at $x_0 \in \mathcal O$ to a $(2d + 1)$-dimensional characteristic leaf $\mathcal O$. Then, there are
\begin{itemize}
\item[$\triangleright$] an open neighborhood $U$ of $0$ in $\mathbbm R^{2d +1}$,
\item[$\triangleright$] a line bundle isomorphism $\Phi : L \to \mathbbm R_{N \times U}$, covering a diffeomorphism $\phi : M \to N \times U$, locally defined around $x_0$, and
\item[$\triangleright$] a closed Atiyah $2$-form $B$ on $\mathbbm R_{N \times U}$ 
\end{itemize}
such that
\begin{enumerate}
\item $\phi$ identifies $N$ with $N \times \{0\}$, and (a neighborhood of $x_0$ in) $\mathcal O$ with $\{x_0\} \times U$,
\item the Courant-Jacobi automorphism $e^B \circ \mathbbm D \Phi$ identifies $\mathfrak L$ with
\end{enumerate}
\begin{equation}
\mathfrak L_N \times^! \mathfrak L_{\mathit{can}}^{\mathit{odd}},
\end{equation}
the flat product of $\mathfrak L_N$ and $\mathfrak L_{\mathit{can}}^{\mathit{odd}}$ with respect to the standard projections $P_N : \mathbbm R_{N \times U} \to \mathbbm R_N$, and $P_U : \mathbbm R_{N \times U} \to \mathbbm R_U$. Here $\mathfrak L_N = I_N^! \mathfrak L$ is the complex Dirac-Jacobi structure of homogeneous generalized complex type induced by $\mathfrak L$ on $N$ (see Proposition \ref{prop:contact_transversal}), and $\mathfrak L_{\mathit{can}}^{\mathit{odd}}$ is the complex Dirac-Jacobi structure of generalized contact type from Remark \ref{rem:can_odd}.
\end{theorem}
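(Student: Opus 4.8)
The plan is to adapt the Euler-like-derivation technique of Bursztyn--Lima--Meinrenken \cite{BLM2016} to the omni-Lie algebroid $\mathbbm D L$. After shrinking $M$ and trivializing $L$ near $x_0$, we regard $\mathfrak L$ as a complex Dirac-Jacobi structure of generalized contact type inside $\mathbbm D\mathbbm R_M\otimes\mathbbm C$. Since $\mathcal O$ is a $(2d+1)$-dimensional contact leaf, Proposition~\ref{prop:contact_leaf} identifies $I_{\mathcal O}^!\mathfrak L$ with a $B$-field transform of the generalized contact structure of the contact manifold $\mathcal O$; invoking the contact Darboux theorem we may arrange coordinates $(x^i,p_i,u)$ on a complement $U\subseteq\mathbbm R^{2d+1}$ of $N$ so that along $\mathcal O\cong\{x_0\}\times U$ this restriction becomes $\mathfrak L_{\mathit{can}}^{\mathit{odd}}$ up to a closed Atiyah $2$-form that will be carried along and absorbed at the very end. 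By Proposition~\ref{prop:contact_transversal}, $\mathfrak L_N=I_N^!\mathfrak L$ is the claimed complex Dirac-Jacobi structure of homogeneous generalized complex type, and, crucially, $p_D\mathfrak L|_N+DL_N\otimes\mathbbm C=(DL)|_N\otimes\mathbbm C$ in a neighborhood of $x_0$ (formula (\ref{eq:sum})); this surjectivity is what makes the normal-form construction possible, and Lemma~\ref{lem:p_DLcap} describes the part of $p_D\mathfrak L$ that survives over the leaf.

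The heart of the argument is the construction of a derivation $\Delta\in\Gamma(DL)$, defined near $x_0$, together with a $1$-jet $\psi\in\Gamma(J^1 L\otimes\mathbbm C)$, such that: (i) $(\Delta,\psi)\in\Gamma(\mathfrak L)$ and $I_N^\ast\psi=0$; and (ii) the symbol $X=\sigma(\Delta)$ is \emph{Euler-like} along $N$, i.e.\ $X|_N=0$ and the linearization of $X$ at points of $N$ equals the Euler vector field of the normal bundle $\nu N=TM|_N/TN$, which we identify with the leaf directions. One produces $X$ by first choosing an Euler-like vector field for $N$ lying in $\sigma(p_D\mathfrak L)$ near $x_0$ (possible by (\ref{eq:sum})), lifting it to a derivation $\Delta$ with $\Delta\in\Gamma(p_D\mathfrak L)$, picking a $1$-jet with $(\Delta,\psi)\in\Gamma(\mathfrak L)$, and correcting $\psi$ by sections of $\mathsf{Ann}(p_D\mathfrak L)$ (again using (\ref{eq:sum})) until $I_N^\ast\psi=0$; note that $\psi$ is necessarily complex, since $\Delta$ real and $\psi$ real would force $(\Delta,\psi)\in\mathfrak L\cap\overline{\mathfrak L}=0$. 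Since $(\Delta,\psi)\in\Gamma(\mathfrak L)$, involutivity yields $(\mathcal L_\Delta-\overline{d_D\psi})\,\Gamma(\mathfrak L)\subseteq\Gamma(\mathfrak L)$, because $(\mathcal L_\Delta-\overline{d_D\psi})(\square,\chi)=\blq(\Delta,\psi),(\square,\chi)\brq$; hence the (complexified) infinitesimal Courant-Jacobi automorphism $(\mathcal L_\Delta-\overline{d_D\psi},\Delta)$ preserves $\mathfrak L$. Integrating it as in Remark~\ref{rem:flow}, and using that $X$ Euler-like determines a tubular-neighborhood identification realizing the diffeomorphism $\phi$ of the statement, one concludes by the standard BLM homotopy that, up to a closed (a priori complex) Atiyah $2$-form, the transported structure becomes a complex Dirac-Jacobi structure $\mathfrak L^{\mathrm{hom}}$ on $N\times U$ that is invariant under the fiberwise scaling of $\nu N$.

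It then remains to recognize $\mathfrak L^{\mathrm{hom}}$ as the flat product. Scaling invariance forces $\mathfrak L^{\mathrm{hom}}$ to be determined by its restriction to the zero section $N$, which is $\mathfrak L_N$ by construction, and by its restriction to a fibre $U$, which by the leaf normalization of the first paragraph is the constant structure $\mathfrak L_{\mathit{can}}^{\mathit{odd}}$; unwinding the definition (\ref{eq:star}) of $\star$ and of the flat product $\times^!$ relative to the standard projections $P_N,P_U$ (condition (3) of Lemma~\ref{lem:product} holds since the bundle is trivial, and $D\mathbbm R_U$ is neutral for $\star$ by Remark~\ref{rem:star_neutral}, so that $\mathfrak L_N\times^! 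D\mathbbm R_U=P_N^!\mathfrak L_N$), this reads $\mathfrak L^{\mathrm{hom}}=P_N^!\mathfrak L_N\star P_U^!\mathfrak L_{\mathit{can}}^{\mathit{odd}}=\mathfrak L_N\times^!\mathfrak L_{\mathit{can}}^{\mathit{odd}}$. By Remark~\ref{rem:can_odd} one has $\mathfrak L_{\mathit{can}}^{\mathit{odd}}=(D\mathbbm R_U\otimes\mathbbm C)^{\mathrm i\omega_{\mathit{can}}}$, so, combining Remarks~\ref{rem:can_odd} and~\ref{rem:product_B-field}, the purely imaginary part of the accumulated Atiyah $2$-form — forced by the leaf normalization to be $P_U^\ast(\mathrm i\omega_{\mathit{can}})$ — is precisely what turns the trivial factor into $\mathfrak L_{\mathit{can}}^{\mathit{odd}}$, leaving a genuine \emph{real} closed Atiyah $2$-form $B$; then $e^B\circ\mathbbm D\Phi$ is the desired Courant-Jacobi automorphism.

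I expect the main obstacle to be the second step: producing the Euler-like derivation $\Delta$ with $(\Delta,\psi)\in\Gamma(\mathfrak L)$ and $I_N^\ast\psi=0$, and controlling the integrated flow — that the tubular-neighborhood embedding exists, that the accumulated Atiyah $2$-forms converge to a closed one, and, most delicately, that after absorbing $P_U^\ast(\mathrm i\omega_{\mathit{can}})$ the remaining $B$-field is \emph{real}. All of this must be carried out while tracking the identity derivation $\mathbb 1$, whose role — manifest in the fact that $\mathfrak L_{\mathit{can}}^{\mathit{odd}}$ is the \emph{imaginary} $B$-transform $(D\mathbbm R_U\otimes\mathbbm C)^{\mathrm i\omega_{\mathit{can}}}$ rather than a real one, cf.\ Remark~\ref{rem:can_odd} — is exactly what distinguishes the Dirac-Jacobi situation from the Dirac one treated in \cite{BLM2016}. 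The remaining verifications (isotropy, involutivity, and the identification of $\mathfrak L^{\mathrm{hom}}$ with the flat product) are routine once scaling invariance is in place.
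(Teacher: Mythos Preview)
Your overall strategy is the same BLM-inspired one the paper uses, but you miss the one shortcut that turns the two steps you flag as ``main obstacles'' into explicit one-line computations: the paper begins by invoking the Dazord--Lichnerowicz--Marle splitting theorem for Jacobi bundles (Theorem~\ref{theor:Dazord_contact}), so that from the outset $M=N\times V$, $L=\mathbbm R_{N\times V}$, and the \emph{underlying Jacobi structure} is already in the normal form $J^\times$ of~(\ref{eq:product_contact}). Once this is done, the Euler-like derivation is not obtained abstractly by lifting through~(\ref{eq:sum}); it is simply $\mathcal E=J^\sharp\psi$ for the explicit \emph{real} $1$-jet $\psi=x^i\,dp_i-p_i\,dx^i+(x^ip_i-u)\cdot\mathfrak j$, which one checks directly is the Euler vector field on $V$. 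The $+\mathrm i$-eigensection is then $(\mathcal E,\chi)$ with $\chi=\mathrm i\psi-\varphi^\dag\psi$, so that $\operatorname{Im}d_D\chi=d_D\psi$ is known explicitly; integrating along the flow gives $B_0=B+\mathrm i\,\omega_{\mathit{can}}$ by a direct computation of $\int_s^1\tau^{-1}K_\tau^\ast(d_D\psi)\,d\tau$, and the chain
\[
\mathfrak L=(K_0^!\mathfrak L)^{B_0}=(P_N^!\mathfrak L_N)^{B+\mathrm i\omega_{\mathit{can}}}=(\mathfrak L_N\times^!(D\mathbbm R_U\otimes\mathbbm C)^{\mathrm i\omega_{\mathit{can}}})^B=(\mathfrak L_N\times^!\mathfrak L_{\mathit{can}}^{\mathit{odd}})^B
\]
finishes the proof with $B$ manifestly real.

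By contrast, your construction of $(\Delta,\psi)$ via the surjectivity~(\ref{eq:sum}) is essentially re-deriving the Jacobi splitting: any real $\Delta$ with $(\Delta,\psi)\in\Gamma(\mathfrak L)$ lies in $p_D\mathfrak L\cap p_D\overline{\mathfrak L}=\operatorname{im}J^\sharp$, so $\Delta=J^\sharp\psi_0$ for some real $\psi_0$, and arranging $\sigma(\Delta)$ Euler-like along $N$ is precisely the BLM-style proof of Theorem~\ref{theor:Dazord_contact}. More seriously, your control of $\operatorname{Im}B_0$ is hand-waved (``forced by the leaf normalization''): without an explicit $\psi_0$ you cannot read off $\operatorname{Im}B_0=\int_0^1\tau^{-1}K_\tau^\ast(d_D\psi_0)\,d\tau$ as $\omega_{\mathit{can}}$, and there is no a~priori reason the mixed $N$--$U$ components of $\operatorname{Im}B_0$ vanish. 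The paper's route through DLM resolves both issues simultaneously by making $\psi_0$ concrete.
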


\begin{proof}
The present proof and, similarly, the proof of Theorem \ref{theor:splitting_lcs} below, are inspired by a general technique recently proposed by Bursztyn, Lima and Meinrenken to prove splitting theorems in Poisson and related geometries. Without loss of generality, we can assume that $M = N \times V$, $L = \mathbbm R_{N \times V}$ is the trivial line bundle, and the Jacobi structure underlying $\mathbbm K$ is $J^\times$, where $V$, $N$ and $J^\times$ are as in Theorem \ref{theor:Dazord_contact}. Now let $\psi \in \Gamma (J^1 \mathbbm R_{N \times V})$ be given by
\[
\psi =  x^i dp_i - p_i dx^i + \left(x^i p_i -u  \right) \cdot \mathfrak j.
\]
Put $\mathcal E := J^\sharp \psi$. Then
\begin{equation}\label{eq:Euler}
\mathcal E = x^i \frac{\partial}{\partial x^i} + u \frac{\partial}{\partial u} + p_i \frac{\partial}{\partial p_i}
\end{equation}
is the Euler vector field on $V$. More precisely, it is the covariant derivative along the Euler vector field with respect to the canonical flat connections in $\mathbbm R_{N \times V}$. By changing $\psi$ into $f \psi$ with $f \in C^\infty (V \times N)$ a suitable bump function equal to $1$ around $N$, we can arrange that $\mathcal E$ is complete, while (\ref{eq:Euler}) still holds around $N$. Denote by $\{ \Phi_t \}$ the flow of $\mathcal E$ on $\mathbbm R_{N \times U}$, and let $\{ \phi_t \}$ be its projection to $N \times V$. Then, for all $t \leq 0$ we have
\[
\Phi_t (x, v\,; r) = (x, \mathrm{exp} (t) \cdot v\,; r), \quad (x, v\,; r) \in N \times V \times \mathbbm R,
\]
at least when $v$ is small enough. Put
\[
U := \left\{ v \in V : \lim_{t \to - \infty} \phi_t (x, v) \in N \times \{0\} \text{ for all $x \in N$}\right\}.
\]
Then $U \subset V$ is an open subset and $\mathcal E$ remains complete when restricted to $U$. Additionally, the family of maps
\[
K_s := \Phi_{\log (s)} : \mathbbm R_{N \times U} \to \mathbbm R_{N \times U}
\]
extends smoothly to $s = 0$, and $K_0 = I_N \circ P_N$, where $P_N : \mathbbm R_{N \times U} \to \mathbbm R_N$ is the canonical projection and $I_N : \mathbbm R_N \to \mathbbm R_{N \times U} $ is the embedding at $v = 0 \in U$. 

Now consider the $+\mathrm{i}$-eigensection $\alpha$ of $\mathbbm K$ given by
\begin{equation}\label{eq:X}
\mathrm{i} (0, \psi) + \mathbbm K (0, \psi) = (\mathcal E, \chi) \in \Gamma (\mathfrak L),
\end{equation}
where we put $\chi := \mathrm{i} \psi - \varphi^\dag \psi$. Consider also the infinitesimal Courant-Jacobi automorphism 
\begin{equation}\label{eq:XX}
(\mathcal L_{\mathcal E} - \overline{d_D \chi}, \mathcal E) = \left(\blq (\mathcal E, \chi), - \brq, \mathcal E \right).
\end{equation}
 From (\ref{eq:X}), (\ref{eq:XX}), and involutivity, the flow of $(\mathcal L_{\mathcal E} - \overline{d_D \chi}, \mathcal E)$ preserves $\mathfrak L$. From Remark \ref{rem:flow} this flow is
\[
\{ (e^{C_t} \circ \mathbbm D \Phi_t, \Phi_t) \}, \quad \text{where } C_t = \int_{0}^t \Phi_{-\epsilon}^\ast (d_D \chi) d \epsilon .
\]
In particular
\begin{equation}\label{eq:L}
\mathfrak L = (\mathbbm D\Phi_{- \log (s)} \mathfrak L)^{C_{- \log (s)}} = (K_s^! \mathfrak L)^{C_{-\log (s)}},
\end{equation}
for all $s > 0$. Put $B_s := C_{- \log (s)}$ and compute
\[
\begin{aligned}
B_s & = \int_{0}^{- \log (s)} \Phi_{- \epsilon}^\ast (d_D \chi) d \epsilon = \int_s^1 \tau^{-1} K_\tau^\ast (d_D \chi) d\tau \\
& = \mathrm{i} \int_s^1 \tau^{-1} K_\tau^\ast (d_D \psi) d\tau - \int_s^1 \tau^{-1} K_\tau^\ast (d_D \varphi^\dag \psi) d\tau.
\end{aligned}
\]
In a possibly smaller neighborhood of $N \times \{0\}$ we have
\[
 K_\tau^\ast (d_D \psi) = K_\tau^\ast \left(2 dx^i \wedge dp_i + (2p_i dx^i - du) \wedge \mathfrak j \right) = 2\tau^2 dx^i \wedge dp_i + (2\tau^2 p_i dx^i - \tau du) \wedge \mathfrak j,
\]
for all $\tau \in [0,1]$. Hence, for all $s \in (0, 1]$,
\[
\int_s^1 \tau^{-1} K_\tau^\ast (d_D \psi) d\tau = (1-s) \left((1+s)dx^i \wedge dp_i - (du - (1+s)p_i dx^i) \wedge \mathfrak j \right),
\]
which extends to $s = 0$. We conclude that, in a possibly smaller neighborhood of $N \times \{0\}$, $B_0$ is well-defined, and, more precisely,
\[
B_0 = B + \mathrm{i} \omega_{\mathit{can}} 
\] 
where $B $ is a certain real closed Atiyah $2$-form. Finally, from (\ref{eq:L}), by continuity, we get, in a neighborhood of $N \times \{0\}$
\[
\begin{aligned}
\mathfrak L & = (K_0^! \mathfrak L)^{B_0} \\
                   & = (P_N^! I_N^! \mathfrak L)^{B + \mathrm{i} \omega_{\mathit{can}}} \\
                   & = ( P_N^! \mathfrak L_N \star D \mathbbm R_{N \times U} \otimes \mathbbm C)^{B + \mathrm{i} \omega_{\mathit{can}}} \quad \text{(Remark \ref{rem:star_neutral})} \\
                   & = (P_N^! \mathfrak L_N \star P_U^! (D \mathbbm R_{U} \otimes \mathbbm C))^{B + \mathrm{i} \omega_{\mathit{can}}} \\
                   & = (\mathfrak L_N \times^! D \mathbbm R_{U}\otimes \mathbbm C)^{B + \mathrm{i} \omega_{\mathit{can}}} \\
                   & = (\mathfrak L_N \times^! (D \mathbbm R_{U}\otimes \mathbbm C)^{\mathrm{i} \omega_{\mathit{can}}})^B  \quad \text{(Remark \ref{rem:product_B-field})}\\
                   & = (\mathfrak L_N \times^! \mathfrak L_{\mathit{can}}^{\mathit{odd}})^B \quad \text{(Equation (\ref{eq:L_can_DR}))}.
\end{aligned}
\]
\end{proof}

\subsection{Splitting around an lcs point}

\begin{remark}\label{rem:can_even}
Consider the homogeneous Poisson structure $(\pi_{\mathit{can}}, Z_{\mathit{can}})$ from Example \ref{ex:can_hP}. The Poisson structure $\pi_{\mathit{can}}$ is non-degenerate and its inverse is $\Omega_{\mathit{can}} = dx^i \wedge dp_i$, the canonical symplectic structure on $\mathbbm R^{2d}$. In its turn $\Omega_{\mathit{can}} = - d \Theta_{\mathit{can}}$, where
\[
\Theta_{\mathit{can}} = p_i dx^i
\]
is the Liouville $1$-form. The pair $(\Omega_{\mathit{can}}, Z_{\mathit{can}})$ is a \emph{homogeneous symplectic structure} in the sense that $\mathcal L_{Z_{\mathit{can}}}\Omega_{\mathit{can}} = \Omega_{\mathit{can}}$, and we can encode it in a complex Dirac-Jacobi structure of homogeneous generalized complex type
\[
\mathfrak L_{\mathit{can}}^{\mathit{ev}} := \left\langle (1-Z_{\mathit{can}}, 0), (\pi_{\mathit{can}}^\sharp \eta, \mathrm{i} \cdot \eta) : \eta \in T^\ast \mathbbm R^{2d} \otimes \mathbbm C \right\rangle .
\]
Now, consider the exact Atiyah $2$-form $\xi_{\mathit{can}} = - d_D \Theta_{\mathit{can}}$. It is easy to see that
\[
\mathfrak L_{\mathit{can}}^{\mathit{ev}} = \left\{ (\Delta, \mathrm{i} \cdot \iota_\Delta \xi_{\mathit{can}}) : \Delta \in D \mathbbm R_{\mathbbm R^{2d}} \otimes \mathbbm C \right\} = (D\mathbbm R_{\mathbbm R^{2d}} \otimes \mathbbm C)^{\mathrm{i} \xi_{\mathit{can}}},
\]
i.e.~$\mathfrak L_{\mathit{can}}^{\mathit{ev}} $ is the complex $B$-field transformation of the complex Dirac-Jacobi structure $D\mathbbm R_{\mathbbm R^{2d}} \otimes \mathbbm C \subset \mathbbm D \mathbbm R_{\mathbbm R^{2d}}\otimes \mathbbm C$ by means of the complex closed Atiyah $2$-form $\mathrm{i} \xi_{\mathit{can}}$. Similar considerations hold for any homogeneous Poisson structure $(\pi, Z)$ such that $\pi$ is non-degenerate. We leave the simple details to the reader.
 \end{remark}

  \begin{theorem}\label{theor:splitting_lcs}
Let $(L \to M, \mathbbm K)$ be a generalized contact bundle, let $\mathfrak L \subset \mathbbm D L\otimes \mathbbm C$ be the $+\mathrm{i}$-eigenbundle of $\mathbbm K$, and let $N$ be a sufficiently small transversal at $x_0 \in \mathcal O$ to a $2d$-dimensional characteristic leaf $\mathcal O$. Then, there are
\begin{itemize}
\item[$\triangleright$] an open neighborhood $U$ of $0$ in $\mathbbm R^{2d}$,
\item[$\triangleright$] a line bundle isomorphism $\Phi : L \to \mathbbm R_{N \times U}$, covering a diffeomorphism $\phi : M \to N \times U$, locally defined around $x_0$, and
\item[$\triangleright$] a closed Atiyah $2$-form $B$ on $\mathbbm R_{N \times U}$ 
\end{itemize}
such that
\begin{enumerate}
\item $\phi$ identifies $N$ with $N \times \{0\}$, and $\mathcal O$ with $\{x_0\} \times U$,
\item the Courant-Jacobi automorphism $e^B \circ \mathbbm D \Phi$ identifies $\mathfrak L$ with
\end{enumerate}
\begin{equation}
\mathfrak L_N \times^! \mathfrak L_{\mathit{can}}^{\mathit{ev}},
\end{equation}
the flat product of $\mathfrak L_N$ and $\mathfrak L_{\mathit{can}}^{\mathit{ev}}$ with respect to the standard projections $P_N : \mathbbm R_{N \times U} \to \mathbbm R_N$, and $P_U : \mathbbm R_{N \times U} \to \mathbbm R_U$. Here $\mathfrak L_N = I_N^! \mathfrak L$ is the complex Dirac-Jacobi bundle structure of generalized contact type induced by $\mathfrak L$ on $N$ (see Proposition \ref{prop:lcs_transversal}), and $\mathfrak L_{\mathit{can}}^{\mathit{ev}}$ is the complex Dirac-Jacobi structure of homogeneous generalized complex type from Remark \ref{rem:can_even}.
\end{theorem}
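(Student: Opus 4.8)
The plan is to run the proof of Theorem \ref{theor:splitting_contact} almost verbatim, replacing the r\^ole played there by the Euler vector field of the contact factor $\mathbbm R^{2d+1}$ with that of the Euler vector field of the symplectic factor $\mathbbm R^{2d}$. First I would invoke the Dazord--Lichnerowicz--Marle Theorem \ref{theor:Dazord_lcs} to reduce, without loss of generality, to the situation $M = N \times V$, $L = \mathbbm R_{N \times V}$ trivial, with $V$ an open neighbourhood of $0$ in $\mathbbm R^{2d}$ (coordinates $x^i,p_i$) and with the Jacobi structure underlying $\mathbbm K$ equal to $J^\times$, the Jacobi structure of the pair $(\Lambda^\times,E^\times)=(\Lambda_N+\pi_{\mathit{can}}-E_N\wedge Z_{\mathit{can}},\,E_N)$.

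Next I would produce the homogeneity derivation. Take $\psi := x^i dp_i - p_i dx^i + x^i p_i \cdot \mathfrak j \in \Gamma(J^1 \mathbbm R_{N\times V})$ and set $\mathcal E := J^{\times\sharp}\psi$. A direct computation with the sharp map of $(\Lambda^\times,E^\times)$ (using $\pi_{\mathit{can}}^\sharp(x^i dp_i - p_i dx^i)= x^i\partial_{x^i}+p_i\partial_{p_i}$, and that the $\mathbbm R_M$-components cancel for this choice of $\mathfrak j$-term) shows that $\mathcal E$ is the covariant derivative, with respect to the canonical flat connection, along the Euler vector field $x^i\partial_{x^i}+p_i\partial_{p_i}$ of $V$. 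As in the contact case, after multiplying $\psi$ by a bump function equal to $1$ near $N$ we may assume $\mathcal E$ complete; let $\{\Phi_t\}$ be its flow on $\mathbbm R_{N\times V}$, $\{\phi_t\}$ the underlying flow on $N\times V$, put $U := \{v\in V : \lim_{t\to-\infty}\phi_t(x,v)\in N\times\{0\}\ \text{for all } x\}$ (an open neighbourhood of $0$ on which $\mathcal E$ stays complete), and note that $K_s := \Phi_{\log s}:\mathbbm R_{N\times U}\to\mathbbm R_{N\times U}$ extends smoothly to $s=0$ with $K_0 = I_N\circ P_N$.

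Then I would carry out the linearization-by-flow argument. Consider the $+\mathrm i$-eigensection $\alpha = \mathrm i(0,\psi)+\mathbbm K(0,\psi) = (\mathcal E,\chi)$, $\chi := \mathrm i\psi-\varphi^\dag\psi$, and the associated infinitesimal Courant--Jacobi automorphism $(\mathcal L_{\mathcal E}-\overline{d_D\chi},\mathcal E)=(\blq(\mathcal E,\chi),-\brq,\mathcal E)$, whose flow preserves $\mathfrak L$ by involutivity. By Remark \ref{rem:flow} this flow is $\{(e^{C_t}\circ\mathbbm D\Phi_t,\Phi_t)\}$ with $C_t=\int_0^t\Phi_{-\epsilon}^\ast(d_D\chi)\,d\epsilon$, whence $\mathfrak L=(K_s^!\mathfrak L)^{B_s}$ for all $s>0$, with $B_s := C_{-\log s}=\int_s^1\tau^{-1}K_\tau^\ast(d_D\chi)\,d\tau$. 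The decisive point is the limit $s\to 0$. Near $N$ one computes $d_D\psi = 2\,\xi_{\mathit{can}}$, where $\xi_{\mathit{can}}=-d_D\Theta_{\mathit{can}}=dx^i\wedge dp_i+p_i dx^i\wedge\mathfrak j$ as in Remark \ref{rem:can_even}, and $K_\tau^\ast\xi_{\mathit{can}}=\tau^2\,\xi_{\mathit{can}}$, so $\mathrm i\int_s^1\tau^{-1}K_\tau^\ast(d_D\psi)\,d\tau=\mathrm i(1-s^2)\,\xi_{\mathit{can}}\to\mathrm i\,\xi_{\mathit{can}}$; as for $-\int_s^1\tau^{-1}K_\tau^\ast(d_D\varphi^\dag\psi)\,d\tau$, the coefficients of $\psi$ all vanish along $N$, so $\chi|_N=0$, hence $I_N^\ast(d_D\chi)=d_D(I_N^\ast\chi)=0$, hence $K_\tau^\ast(d_D\varphi^\dag\psi)=O(\tau)$ near $\tau=0$ and this integral converges to a real, closed Atiyah $2$-form. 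Thus $B_0=B+\mathrm i\,\xi_{\mathit{can}}$ with $B$ a real closed Atiyah $2$-form on $\mathbbm R_{N\times U}$. Passing to the limit in $\mathfrak L=(K_s^!\mathfrak L)^{B_s}$ and unwinding exactly as in the contact case --- $K_0=I_N\circ P_N$, $\mathfrak L_N=I_N^!\mathfrak L$ (a complex Dirac--Jacobi structure of generalized contact type by Proposition \ref{prop:lcs_transversal}), Remark \ref{rem:star_neutral}, the identity $P_U^!(D\mathbbm R_U\otimes\mathbbm C)=D\mathbbm R_{N\times U}\otimes\mathbbm C$, Remark \ref{rem:product_B-field}, and Remark \ref{rem:can_even} (in the form $(D\mathbbm R_{\mathbbm R^{2d}}\otimes\mathbbm C)^{\mathrm i\xi_{\mathit{can}}}=\mathfrak L_{\mathit{can}}^{\mathit{ev}}$) --- gives $\mathfrak L=(\mathfrak L_N\times^!\mathfrak L_{\mathit{can}}^{\mathit{ev}})^B$, which is the assertion.

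The only genuinely substantive point --- the one I would verify most carefully --- is that the \emph{explicit}, $V$-directional part of the limiting Atiyah $2$-form $B_0$ is precisely $\mathrm i\,\xi_{\mathit{can}}$, so that it is absorbed by the canonical homogeneous symplectic structure $\mathfrak L_{\mathit{can}}^{\mathit{ev}}$ rather than leaving behind a spurious complex $B$-field; this fixes the correct choice of the $\mathfrak j$-component of $\psi$. The convergence of the $\varphi^\dag\psi$-correction term rests, as above, on $\psi|_N=0$. Beyond this there is no new conceptual obstacle compared with Theorem \ref{theor:splitting_contact}: the argument is essentially a transcription, with $\mathbbm R^{2d+1}\rightsquigarrow\mathbbm R^{2d}$, $\omega_{\mathit{can}}\rightsquigarrow\xi_{\mathit{can}}$, $\mathfrak L_{\mathit{can}}^{\mathit{odd}}\rightsquigarrow\mathfrak L_{\mathit{can}}^{\mathit{ev}}$, and Proposition \ref{prop:contact_transversal} $\rightsquigarrow$ Proposition \ref{prop:lcs_transversal}.
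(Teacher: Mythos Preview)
Your proposal is correct and follows essentially the same approach as the paper's own proof: invoke Theorem \ref{theor:Dazord_lcs}, choose the same $\psi = x^i dp_i - p_i dx^i + x^i p_i\cdot\mathfrak j$, observe $\mathcal E = J^{\times\sharp}\psi$ is the Euler field on $V$, and run the Bursztyn--Lima--Meinrenken retraction exactly as in Theorem \ref{theor:splitting_contact} to obtain $B_0 = B + \mathrm{i}\,\xi_{\mathit{can}}$. Your write-up is in fact more explicit than the paper's (which compresses the computation of $B_0$ into ``a direct computation then shows''), in particular your justification of the convergence of the $\varphi^\dag\psi$-integral via $\psi|_N = 0 \Rightarrow K_0^\ast(d_D\varphi^\dag\psi)=0 \Rightarrow K_\tau^\ast(d_D\varphi^\dag\psi)=O(\tau)$ is a welcome addition.
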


\begin{proof}
Without loss of generality, we assume that $M = N \times V$, $L = \mathbbm R_{N \times V}$ is the trivial line bundle, and the Jacobi structure underlying $\mathbbm K$ is $J^\times$, where $V$, $N$ and $J^\times$ are as in Theorem \ref{theor:Dazord_lcs}. Let $\psi \in \Gamma (J^1 \mathbbm R_{N \times V})$ be given by
\[
\psi = x^i dp_i - p_i dx^i + \left(x^i p_i \right) \cdot \mathfrak j.
\]
So that
\begin{equation*}
\mathcal E := J^\sharp \psi =  x^i \frac{\partial}{\partial x^i} + p_i \frac{\partial}{\partial p_i}
\end{equation*}
is the Euler vector field on $V$. We define $U$, $K_0$, $B_0$ exactly as in the proof of theorem \ref{theor:splitting_contact}. A direct computation then shows that $B_0$ is well defined around $N \times \{0\}$ and it is given by
\[
B_0 = B + \mathrm{i} \xi_{\mathit{can}}
\]
for some real closed Atiyah $2$-form $B$. Exactly as in the proof of Theorem \ref{theor:splitting_contact} we now get
\[
\mathfrak L = (K_0^! \mathfrak L)^{B_0} = (\mathfrak L_N \times^! \mathfrak L_{\mathit{can}}^{ev})^B.
\]
\end{proof}

\section{The regular case}\label{Sec:regular}

Let $(L \to M, \mathbbm K)$ be a generalized contact bundle, and let $J$ be the Jacobi structure underlying $\mathbbm K$. A point $x_0 \in M$ is a \emph{regular point} for $\mathbbm K$ if the characteristic leaves of $J$ has constant dimension around $x_0$. Similarly as in the generalized complex case \cite{G2011}, when $x_0 \in M$ is a regular point, the Splitting Theorems \ref{theor:splitting_contact} and \ref{theor:splitting_lcs} simplify and we get honest local normal form theorems around $x_0$.

\subsection{Local normal form around a regular contact point}

\begin{remark}
Denote by $A_{\mathit{can}}$ the standard complex structure on $\mathbbm C^n$. It can be encoded in a generalized complex structure
\begin{equation}\label{eq:can_complex}
\left( 
\begin{array}{cc}
A_{\mathit{can}} & 0 \\
0 & - A_{\mathit{can}}^\ast
\end{array}
\right)
\end{equation}
whose $+\mathrm{i}$-eigenbundle is $T^{1,0} \mathbbm C^n \oplus (T^{0,1} \mathbbm C^n)^\ast$. The generalized complex structure (\ref{eq:can_complex}) is homogeneous with respect to the zero section $(0,0) \in \Gamma (\mathbbm T \mathbbm C^n)$, and we get the following complex Dirac-Jacobi structure of homogeneous generalized complex type on $\mathbbm R_{\mathbbm C^n}$:
\[
\mathfrak L_{\mathbbm C^n} := \left\langle (1, 0), (X, \eta) : X \in T^{1,0} \mathbbm C^n, \text{ and }\eta \in (T^{0,1} \mathbbm C^n)^\ast \right\rangle.
\]  
\end{remark}

\begin{theorem}
Let $(L \to M, \mathbbm K)$ be a generalized contact bundle with $\dim M = 2 (n+d) +1$. Let $\mathfrak L \subset \mathbbm D L\otimes \mathbbm C$ be the $+\mathrm{i}$-eigenbundle of $\mathbbm K$, and let $x_0 \in M$ be a regular point in a $(2d + 1)$-dimensional characteristic leaf. Then, locally, around $x_0$, $\mathfrak L$ is isomorphic to the flat product
\[
\mathfrak L_{\mathbbm C^n} \times^! \mathfrak L_{\mathit{can}}^{\mathit{odd}}
\]
with respect to the standard projections $\mathbbm R_{\mathbbm C^n \times \mathbbm R^{2d +1}} \to \mathbbm R_{\mathbbm C^n}$, and $\mathbbm R_{\mathbbm C^n \times \mathbbm R^{2d +1}} \to \mathbbm R_{\mathbbm R^{2d +1}}$, up to a $B$-field transformation.
\end{theorem}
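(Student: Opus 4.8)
The plan is to feed the point $x_0$ to the Splitting Theorem \ref{theor:splitting_contact} and then recognise the transversal factor as the standard one. By Theorem \ref{theor:splitting_contact}, after shrinking $M$ and composing with a Courant--Jacobi automorphism and a $B$-field transformation, we may assume $L=\mathbbm R_{N\times U}$ with $U\subset\mathbbm R^{2d+1}$ an open neighbourhood of $0$, and
\[
\mathfrak L=\bigl(\mathfrak L_N\times^!\mathfrak L_{\mathit{can}}^{\mathit{odd}}\bigr)^{B'},
\]
where $\mathfrak L_N=I_N^!\mathfrak L$ is the complex Dirac--Jacobi structure of homogeneous generalized complex type on the $2n$-dimensional transversal $N$ from Proposition \ref{prop:contact_transversal}, and $\bigl(\mathfrak L_N\cap\overline{\mathfrak L}_N\bigr)_{x_0}$ is spanned by a vector whose $DL_N$-component is $\mathbb 1_{x_0}$. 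Since $B$-field transformations do not change $p_D$, and in view of Remark \ref{rem:product_B-field}, it suffices to produce, near $x_0$, a Courant--Jacobi automorphism and a $B$-field transformation on $\mathbbm R_N$ carrying $\mathfrak L_N$ to $\mathfrak L_{\mathbbm C^n}$; transporting them across the product (and absorbing $B'$) will then give the claim for $\mathfrak L$.

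Next I would use regularity to kill the homogeneity data on $N$. Choose a nowhere vanishing real local section $(\Delta,\psi)$ spanning $\mathfrak L_N\cap\overline{\mathfrak L}_N$ with $\Delta_{x_0}=\mathbb 1_{x_0}$. In a local trivialization $L_N\cong\mathbbm R_N$ we have $\mathfrak L_N=\mathfrak L_{(\mathbbm J,\mathbbm Z)}$ for a homogeneous generalized complex structure $(\mathbbm J,\mathbbm Z)$, $\mathbbm Z=(Z,\zeta)$, with $\mathbbm J$ having Poisson part $\pi$; a direct computation with the generators of Proposition \ref{prop:L_J_z} shows that $p_D\mathfrak L_N\cap p_D\overline{\mathfrak L}_N=\langle\,\mathbb 1-Z\,\rangle\oplus\operatorname{im}\pi^\sharp$, so by Lemma \ref{lem:p_DLcap} the characteristic distribution of the Jacobi structure underlying $\mathfrak L$ equals $\bigl(\langle Z\rangle+\operatorname{im}\pi^\sharp\bigr)\oplus TU$ along $N\times U$. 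Since the Splitting Theorem identifies (a neighbourhood of $x_0$ in) the characteristic leaf $\mathcal O$ with $\{x_0\}\times U$, this distribution reduces to $\{0\}\oplus T_0U$ at $(x_0,0)$; hence $Z_{x_0}=0$ and $\pi_{x_0}=0$, and, $x_0$ being regular, constancy of rank forces $\pi\equiv 0$ and $Z\equiv 0$ on a neighbourhood of $x_0$. In particular $\Delta$ is proportional to $\mathbb 1$ there; after rescaling, $(\Delta,\psi)=(\mathbb 1,\zeta_0)$ with $\zeta_0\in\Gamma(T^\ast N\otimes L_N)$ real, and in any trivialization $\mathfrak L_N=\mathfrak L_{(\mathbbm J,(0,\zeta_0))}$ with $\mathbbm J$ of complex type, underlying integrable complex structure $A$, and (from Definition \ref{def:hom_gen_compl} with $Z=0$) $2$-form part $\sigma=\iota_A d\zeta_0$.

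Finally I would remove the residual $\zeta_0$ and then $A$. The exact Atiyah $2$-form $B:=-d_D\zeta_0$ is closed; using the Cartan calculus of Remark \ref{rem:trivial} one checks $\iota_{\mathbb 1}B=-\zeta_0$ and $\iota_X B=-\iota_X d\zeta_0+\zeta_0(X)\,\mathfrak j$ for $X\in\mathfrak X(N)$, so $e^B$ sends $(\mathbb 1,\zeta_0)$ to $(\mathbb 1,0)$ and each remaining generator $(X,\eta-\zeta_0(X)\,\mathfrak j)$ to $(X,\eta-\iota_Xd\zeta_0)$; thus $e^B$ carries $\mathfrak L_N=\mathfrak L_{(\mathbbm J,(0,\zeta_0))}$ to $\mathfrak L_{(\mathbbm J^{-d\zeta_0},0)}$, the complex Dirac--Jacobi structure attached to the generalized complex structure $\mathbbm J^{-d\zeta_0}$, which is still of complex type with underlying complex structure $A$, and to the trivial homogeneity section. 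It remains to normalise $\mathbbm J^{-d\zeta_0}$: by the Newlander--Nirenberg theorem together with the local normal form for regular generalized complex structures of complex type (see \cite{G2011}), a diffeomorphism straightening $A$ to $A_{\mathit{can}}$ followed by one more $B$-field transformation by a closed $2$-form on $N$ turns $\mathbbm J^{-d\zeta_0}$ into the standard complex structure on $\mathbbm C^n$, hence turns $\mathfrak L_N$ into $\mathfrak L_{\mathbbm C^n}$ (the trivial homogeneity section being preserved throughout, since diffeomorphisms and $2$-forms pulled back from $N$ fix $(\mathbb 1,0)$). Composing all these Courant--Jacobi automorphisms and $B$-field transformations and transporting them through the product via Remark \ref{rem:product_B-field}, with $B'$ absorbed into the final $B$-field, yields $\mathfrak L\cong\bigl(\mathfrak L_{\mathbbm C^n}\times^!\mathfrak L_{\mathit{can}}^{\mathit{odd}}\bigr)^B$.

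The main obstacle is tracking the homogeneity section of the transversal through the Splitting Theorem: one has to see that regularity forces it to vanish identically near $x_0$, not merely at $x_0$, and then notice that, although it cannot be removed by a $B$-field pulled back from $N$, the honest closed Atiyah $2$-form $-d_D\zeta_0$ on the gauge algebroid of $\mathbbm R_N$ does remove it. Once the homogeneity section is gone, the remaining argument is just the standard local theory of regular complex-type generalized complex structures.
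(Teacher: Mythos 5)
Your proposal is correct and follows essentially the same route as the paper's own proof: reduce via Theorem \ref{theor:splitting_contact} to the transversal factor, use regularity to force the homogeneity data to vanish (so $Z=0$, $\pi=0$, $A$ a complex structure, $\sigma=\iota_A d\zeta$), remove $\zeta$ and $\sigma$ by an exact $B$-field transformation, and finish with Newlander--Nirenberg. The only minor differences are cosmetic: you establish $\pi\equiv 0$, $Z\equiv 0$ by a rank count on the characteristic distribution instead of appealing to the proofs of Propositions \ref{prop:contact_transversal} and \ref{prop:hom_gen_compl}, your $B$-field $-d_D\zeta_0$ carries the sign actually consistent with the Cartan-calculus conventions of Remark \ref{rem:trivial} (the paper writes $e^{d_D\zeta}$), and your appeal to Gualtieri's normal form is not needed since the direct computation already gives $\sigma'=0$.
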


\begin{proof}
Let $N$ be a sufficiently small transversal to the characteristic leaf through $x_0$. 
From Theorem \ref{theor:splitting_contact}, it is enough to show that the induced 
Dirac-Jacobi structure $\mathfrak L_N = I^!_N \mathfrak L$ on $N$ is isomorphic 
to $\mathfrak L_{\mathbbm C^n}$ around $x_0$ up to a $B$-field transformation. 
From the proof of Proposition \ref{prop:contact_transversal} and the fact that $x_0$
is a regular point, it easily follows that $\mathfrak L_N \cap \overline{\mathfrak L_N}$ is (everywhere, not only at $x_0$) spanned by a section of the form $(\mathbb 1, \zeta)$, with $\zeta \in T^\ast N \otimes L_N$, and, from the proof of Proposition \ref{prop:hom_gen_compl}, $\mathfrak L_N$ is isomorphic to a Dirac-Jacobi structure of generalized complex type of the form $\mathfrak L_{(\mathbbm J, \mathbbm Z)}$ (see (\ref{eq:hgc_DJ})) with $Z = 0$. In particular, 
\begin{enumerate}
\item $\pi = 0$, 
\item $A$ is a complex structure on $N$, and
\item $\sigma = \iota_A d \zeta$
\end{enumerate}
(see Definition \ref{def:hom_gen_compl}). A direct computation exploiting (1) and (3) shows that, after the $B$-field transformation $(e^{d_D \zeta} , \mathrm{id})$, we achieve $\zeta = \sigma = 0$. Finally, with a diffeomorphism, we achieve $A = A_{\mathit{can}}$, showing that $\mathfrak L_N$ is isomorphic to $\mathfrak L_{\mathbbm C^n}$ up to a $B$-field transformation.
\end{proof}

\subsection{Local normal form around a regular lcs point}

\begin{remark}
Consider the standard complex structure $\varphi_{\mathit{can}}$ on the gauge algebroid of the trivial line bundle over the cylinder $\mathbbm R \times \mathbbm C^n$ from Example \ref{ex:varphi_can} in the Appendix. It can be encoded in a generalized contact structure
\begin{equation}\label{eq:can_Rxcomplex}
\left( 
\begin{array}{cc}
\varphi_{\mathit{can}} & 0 \\
0 & - \varphi_{\mathit{can}}^\dag
\end{array}
\right)
\end{equation}
whose $+\mathrm{i}$-eigenbundle is 
\[
\mathfrak L_{\mathbbm R \times \mathbbm C^n} = D^{1,0} \mathbbm R_{\mathbbm R \times \mathbbm C^n} \oplus (D^{0,1} \mathbbm R_{\mathbbm R \times \mathbbm C^n})^\ast
\]
(see Appendix \ref{appendix} for more details).
\end{remark}

\begin{theorem}
Let $(L \to M, \mathbbm K)$ be a generalized contact bundle with $\dim M = 2 (n+d) +1$. Let $\mathfrak L \subset \mathbbm D L\otimes \mathbbm C$ be the $+\mathrm{i}$-eigenbundle of $\mathbbm K$, and let $x_0 \in M$ be a regular point in a $2d$-dimensional characteristic leaf. Then, locally, around $x_0$, $\mathfrak L$ is isomorphic to the flat product
\[
\mathfrak L_{\mathbbm R \times \mathbbm C^n} \times^! \mathfrak L_{\mathit{can}}^{\mathit{ev}}
\]
with respect to the standard projections $\mathbbm R_{(\mathbbm R \times \mathbbm C^n) \times \mathbbm R^{2d}} \to \mathbbm R_{\mathbbm R \times \mathbbm C^n}$, and $\mathbbm R_{(\mathbbm R \times \mathbbm C^n) \times \mathbbm R^{2d}} \to \mathbbm R_{\mathbbm R^{2d}}$, up to a $B$-field transformation.
\end{theorem}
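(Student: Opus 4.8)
The plan is to mimic the proof of the regular contact case (the preceding theorem) word by word, replacing the role of the complex structure $A$ on $N$ with the complex structure $\varphi$ on the gauge algebroid $DL_N$. By Theorem \ref{theor:splitting_lcs} it suffices to show that the complex Dirac-Jacobi structure $\mathfrak L_N = I^!_N \mathfrak L$ of generalized contact type induced on a (small) transversal $N$ is, around $x_0$, isomorphic up to a $B$-field transformation to $\mathfrak L_{\mathbbm R \times \mathbbm C^n}$, the $+\mathrm{i}$-eigenbundle of the canonical complex structure $\varphi_{\mathit{can}}$ on the gauge algebroid of the trivial line bundle over the cylinder. Since $x_0$ lies in an lcs leaf, $N$ is odd-dimensional of dimension $2n+1$, and since $x_0$ is a regular point, the Jacobi structure $\tilde J$ underlying $\mathfrak L_N$ vanishes identically near $x_0$ (by Proposition \ref{prop:lcs_transversal}, $\tilde J$ vanishes at $x_0$; regularity forces its rank to stay zero).

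First I would record what it means for a generalized contact structure $\mathbbm K_N = \left(\begin{smallmatrix}\varphi & \tilde J^\sharp \\ \omega_\flat & -\varphi^\dag\end{smallmatrix}\right)$ on $L_N$ to have vanishing Jacobi structure $\tilde J = 0$: the integrability identities of \cite{VW2016} then collapse to the statement that $\varphi$ is an integrable complex structure on the gauge algebroid $DL_N$ and that $\omega$ is a closed Atiyah $2$-form with $\omega_\flat \circ \varphi = \varphi^\dag \circ \omega_\flat$ (this is exactly the ``$J=\omega=0$ gives a complex structure on $DL$'' remark after (\ref{eq:split}), now with a nonzero but compatible $\omega$). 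Next, applying the $B$-field transformation $e^{-\omega}$ — legitimate because $\omega$ is a \emph{real} closed Atiyah $2$-form, hence $(e^{-\omega},\mathrm{id})$ is a genuine Courant-Jacobi automorphism — kills the lower-left corner and conjugates $\mathbbm K_N$ to $\left(\begin{smallmatrix}\varphi & 0 \\ 0 & -\varphi^\dag\end{smallmatrix}\right)$; correspondingly $\mathfrak L_N^{-\omega}$ becomes $D^{1,0}L_N \oplus (D^{0,1}L_N)^\ast$, the $+\mathrm{i}$-eigenbundle of a pure complex structure $\varphi$ on $DL_N$. Finally I would invoke the Newlander--Nirenberg-type theorem of Appendix \ref{appendix}: an integrable complex structure on the gauge algebroid of a line bundle is locally isomorphic to the canonical one $\varphi_{\mathit{can}}$ on the trivial line bundle over $\mathbbm R \times \mathbbm C^n$. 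Composing this line-bundle isomorphism with $e^{-\omega}$ produces the required identification of $\mathfrak L_N$ with $\mathfrak L_{\mathbbm R \times \mathbbm C^n}$ up to a $B$-field transformation, and then Theorem \ref{theor:splitting_lcs} upgrades this to the full flat-product statement.

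The main obstacle I anticipate is the bookkeeping around the two separate $B$-field transformations and making sure they can be absorbed into a single one. The splitting theorem already introduces a real closed Atiyah $2$-form $B$ on $\mathbbm R_{N\times U}$, and the reduction of $\mathfrak L_N$ to pure complex type introduces a second, $N$-supported one $\omega$; one must check (using Remark \ref{rem:product_B-field}, i.e.~$(\mathfrak L_1 \times^! \mathfrak L_2)^{P_1^\ast B} = \mathfrak L_1^B \times^! \mathfrak L_2$) that $P_N^\ast \omega$ commutes past the product and combines with $B$ into one closed Atiyah $2$-form. A secondary subtlety is verifying carefully that $\tilde J = 0$ on a neighborhood rather than merely at $x_0$ — this is where regularity of the point is essential and where the argument genuinely differs from the non-regular splitting theorem; one compares ranks of $\operatorname{im}\tilde J^\sharp$ using lower semicontinuity of the rank together with the constant-dimension hypothesis on the characteristic leaves. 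Everything else is a transcription of the regular contact proof with ``complex structure on $TN$'' systematically replaced by ``complex structure on $DL_N$,'' and with Appendix \ref{appendix} playing the role that the classical Newlander--Nirenberg theorem played there.
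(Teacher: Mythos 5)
Your overall strategy (reduce to the transversal via Theorem \ref{theor:splitting_lcs}, show $\tilde J\equiv 0$ near $x_0$ by regularity, then normalize the resulting structure of complex type using the Newlander--Nirenberg-type Theorem \ref{theor:ANN}) matches the paper, and your observation that the two $B$-fields combine via Remark \ref{rem:product_B-field} is fine. But the central step of your reduction contains a genuine gap: the claim that the $B$-field transformation $e^{-\omega}$ kills the lower-left corner of $\mathbbm K_N=\left(\begin{smallmatrix}\varphi_N & 0\\ (\omega_N)_\flat & -\varphi_N^\dag\end{smallmatrix}\right)$ is false. Since $\mathbbm K_N^2=-\mathrm{id}$ with vanishing Jacobi part forces $(\omega_N)_\flat\circ\varphi_N=\varphi_N^\dag\circ(\omega_N)_\flat$, the $2$-form $\omega_N$ is of type $(2,0)+(0,2)$ with respect to $\varphi_N$. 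A direct computation shows that, for any closed Atiyah $2$-form $B$, the lower-left corner of $e^{B}\circ\mathbbm K_N\circ e^{-B}$ is $(\omega_N)_\flat+B_\flat\varphi_N+\varphi_N^\dag B_\flat$; the operator $B\mapsto B_\flat\varphi_N+\varphi_N^\dag B_\flat$ annihilates $(1,1)$-forms and acts (after complexification) by $\pm2\mathrm{i}$ on the $(2,0)$ and $(0,2)$ components, so taking $B=-\omega_N$ yields $(\omega_N)_\flat-2\varphi_N^\dag(\omega_N)_\flat$, which vanishes only if $\omega_N=0$. (Also, integrability with $\tilde J=0$ does \emph{not} give $d_D\omega_N=0$; it only gives the vanishing of the $(3,0)$-part, i.e.\ $\partial_D\gamma=0$ for the induced $(2,0)$-form $\gamma$.)

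What is actually needed is a \emph{real closed} Atiyah $2$-form $B$ solving $B_\flat\varphi_N+\varphi_N^\dag B_\flat=-(\omega_N)_\flat$, equivalently one that kills the $(2,0)$-form $\gamma$ carried by $\mathfrak L_N$, and producing such a $B$ while keeping it closed is exactly where the local vanishing of the Dolbeault--Atiyah cohomology enters: one shows $\partial_D\gamma=0$ from involutivity, writes $\gamma=\partial_D\rho$ by Remark \ref{rem:Dolbeault} (a consequence of Theorem \ref{theor:DAC}, which itself rests on Theorem \ref{theor:ANN}), and takes $B=-2\operatorname{Re}(\gamma+\overline\partial_D\rho)=-2\operatorname{Re}(d_D\rho)$. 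This $\partial_D$-lemma is the ingredient flagged in the introduction as indispensable for this theorem, and it is absent from your argument; without it the passage from $\mathbbm K_N$ to the pure complex-type structure $\left(\begin{smallmatrix}\varphi_N & 0\\ 0 & -\varphi_N^\dag\end{smallmatrix}\right)$ does not go through. Your other shortcut, deducing $\tilde J\equiv0$ near $x_0$ from vanishing at $x_0$ plus regularity, is correct in substance but is more cleanly obtained as in the paper: regularity gives $p_D\mathfrak L\cap p_D\overline{\mathfrak L}=\operatorname{im}\nabla_{\mathcal F}$ for a flat leafwise connection, whose intersection with $DL_N$ is zero, so $\operatorname{im}\tilde J^\sharp\otimes\mathbbm C=p_D\mathfrak L_N\cap p_D\overline{\mathfrak L_N}=0$.
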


\begin{proof}
Let $N$ be a sufficiently small transversal to the characteristic leaf through $x_0$. 
From Theorem \ref{theor:splitting_lcs}, it is enough to show that the induced 
Dirac-Jacobi structure $\mathfrak L_N = I^!_N \mathfrak L$ on $N$ is isomorphic 
to $\mathfrak L_{\mathbbm R \times \mathbbm C^n}$ around $x_0$ up to a $B$-field transformation. 
As $x_0$ is a regular point, the characteristic foliation $\mathcal F$ is a regular lcs foliation around $x_0$. In particular $p_D \mathfrak L \cap p_D \overline{\mathfrak L} = \operatorname{im} \nabla_{\mathcal F}$ where $\nabla_{\mathcal F}$ is a flat leaf-wise connection along $\mathcal F$ in $L$. Hence
 $p_D \mathfrak L_N \cap p_D \overline{\mathfrak L_N} = p_D \mathfrak L \cap p_D \overline{\mathfrak L} \cap D L_N = \operatorname{im} \nabla_{\mathcal F} \cap D L_N = 0$. This means that $\mathfrak L_N$ is the $+\mathrm{i}$-eigenbundle of a generalized contact structure $\mathbbm K_N$ of the form
 \begin{equation}\label{eq:I_N}
\mathbbm K_N =  \left(
 \begin{array}{cc}
 \varphi_N & 0 \\
 (\omega_N)_\flat & - \varphi_N^\dag
 \end{array}
 \right).
 \end{equation}
 In particular $\varphi_N$ is an integrable complex structure on the Atiyah algebroid $DL_N$. In the following we refer to the Appendix for notation and the main properties of such a complex structure. First of all, notice that from (\ref{eq:I_N}) we have $p_D \mathfrak L_N = D^{(1,0)} L_N$ (recall from the Appendix that $D^{(1,0)} L_N$ denotes the $+\mathrm{i}$-eigenbundle of $\varphi_N$). Define a complex Atiyah $2$-form $\gamma \in \Omega^{(2,0)}_{L_N} \otimes \mathbbm C$ by putting
 \[
 \gamma (\Delta, \nabla) = \langle \psi, \nabla \rangle, \quad \Delta, \nabla \in D^{(1,0)}L_N,
 \]
 where $\psi \in J^1 L \otimes \mathbbm C$ is any element such that $(\Delta, \psi) \in \mathfrak L_N$. A straightforward computation using the involutivity of $\mathfrak L_N$ shows that $\partial_D \gamma = 0$, and, from Remark \ref{rem:Dolbeault}, locally around $x_0$, there is $\rho \in \Omega^{(1,0)}_{L_N}$ such that $\gamma = \partial_D \rho$. It is easy to see that
 \[
B := - 2 \operatorname{Re} \left(\gamma + \overline \partial_D \rho \right) 
 \]
 is a (real) closed Atiyah $2$-form. We claim that
 \begin{equation}\label{eq:finale}
 \mathfrak L_N^B = D^{(1,0)} L_N \oplus \mathsf{Hom}(D^{(0,1)} L_N, L_N).
 \end{equation}
 This follows, after a simple computation, from the remark that
 \[
 \mathfrak L_N = \operatorname{\mathsf{graph}} \gamma \oplus \mathsf{Hom}(D^{(0,1)} L_N, L_N)
 \]
 and the fact that $(\gamma + B)_\flat$ takes values in $\mathsf{Hom}(D^{(0,1)} L_N, L_N)$.
 
Notice that (\ref{eq:finale}) means that $\mathfrak L_N^B$ is the $+\mathrm{i}$-eigenbundle of the generalized contact structure
 \[
 \left(
 \begin{array}{cc}
 \varphi_N & 0 \\
 0 & - \varphi_N^\dag
 \end{array}
 \right).
 \]
 Finally, in view of Theorem \ref{theor:ANN}, with a line bundle isomorphism we can achieve $\varphi_N = \varphi_{\mathit{can}}$, and this concludes the proof.
 \end{proof}
 
\noindent \textbf{Acknowledgments.} We thank both referees for having read our first manuscript carefully, and for numerous comments that helped us improving the presentation significantly. LV is member of GNSAGA of INdAM.

\appendix

\section{Complex structures on the gauge algebroid}\label{appendix}

Let $L \to M$ be a line bundle. In this appendix we study the local properties of a generalized contact structure of \emph{complex type}, i.e.~a generalized contact structure $\mathbbm K$ on $L$, of the form
\begin{equation}\label{eq:gcs_ct}
\mathbbm K =
\left(
\begin{array}{cc}
\varphi & 0 \\
0 & -\varphi^\dag
\end{array}
\right).
\end{equation}
In this case $\varphi : DL \to DL$ is a(n integrable) \emph{complex structure} on the gauge algebroid $DL$, i.e.
\begin{itemize}
\item[$\triangleright$] $\varphi$ is \emph{almost complex}, i.e.~$\varphi^2 = - \mathrm{id}$,
\item[$\triangleright$] $\varphi$ is \emph{integrable}, i.e.~its Lie algebroid \emph{Nijenhuis torsion} $\mathcal N_\varphi$ vanishes.
\end{itemize}
Here $\mathcal N_\varphi : \wedge^2 DL \to DL$ is the skew-symmetric bilinear map defined by
\[
\mathcal N_\varphi (\Delta, \nabla) = [\varphi \Delta, \varphi \nabla] - [\Delta, \nabla] - \varphi \left( [\varphi \Delta, \nabla] + [\Delta, \varphi \nabla] \right), \quad \Delta, \nabla \in \Gamma(DL).
\]
Conversely, given a complex structure on $DL$, (\ref{eq:gcs_ct}) defines a generalized contact structure.

\begin{example}\label{ex:varphi_can}
Consider the cylinder $\mathbbm R \times \mathbbm C^n$ over the standard complex space $\mathbbm C^n$. Let $u$ be the standard real coordinate on the first factor, and let $z^i = x^i + \mathrm{i} y^i$, $i = 1, \ldots, n$, be the standard complex coordinates on the second factor. There is a canonical integrable complex structure $\varphi_{\mathit{can}}$ on the gauge algebroid of the trivial line bundle $\mathbbm R_{\mathbbm R \times \mathbbm C^n}$ defined by
\[
\varphi_{\mathit{can}} \mathbb 1 = \frac{\partial}{\partial u}, \quad \text{and} \quad \varphi_{\mathit{can}} \frac{\partial}{\partial x^i} = \frac{\partial}{\partial y^i}.
\]
\end{example}

\begin{example}[Normal Almost Contact Structures]\label{exam:nacs}
Our main reference for this example is \cite{IW2005}, where the reader will find basically all the proofs. We will see in this example and Lemma \ref{lem:ac} that almost contact structures (resp.~normal almost contact structures) are locally the same as almost complex structures (resp.~integrable almost complex structures) on the gauge algebroid of a trivial line bundle $\mathbbm R_M \to M$. Recall that an \emph{almost contact structure} on a manifold $M$ is a triple $(\Phi, \xi, \eta)$, where $\Phi : TM \to TM$ is a $(1,1)$-tensor, $\xi$ is a vector field, and $\eta$ is a $1$-form on $M$ such that
\[
	\Phi^2=-\mathrm{id} +\eta\otimes \xi, \quad \Phi(\xi)=0,\quad \eta\circ\Phi=0, \quad \text{and} \quad \eta(\xi)=1.
\]
See, e.g., \cite{B2002} for more details. The idea behind this definition is that \emph{an almost contact structure is the odd-dimensional analogue of an almost complex structure}. We believe that the use of line bundles and their gauge algebroids makes the analogy much more transparent.  Namely, recall that the gauge algebroid of the trivial line bundle is $D \mathbbm R_M \cong TM \oplus \mathbbm R_M$. Now take a triple $(\Phi, \xi, \eta)$ consisting of an $(1,1)$-tensor, a vector field and a $1$-form on $M$, and let $\varphi :  D\mathbbm{R}_M  \to D\mathbbm{R}_M$ be the endomorphism given by 
	\begin{equation}\label{Ex:AlmNorCon}
	\varphi (X,r) = (\Phi(X)-r\xi, \eta(X))
	\end{equation}
Then $(\Phi, \xi, \eta)$ is an almost contact structure if and only if $\varphi$ is an almost complex structure, i.e.~$\varphi^2=-\mathrm{id}$. Additionally, $\varphi$ is integrable if and only if   
	\begin{equation}\label{eq:normal_ac}
	\mathcal N_\Phi+d\eta\otimes\xi=0,\quad d\eta(\Phi-,-)+d\eta(-,\Phi-)=0,\quad \mathcal{L}_\xi \Phi=0, \quad \text{and} \quad \mathcal{L}_{\xi}\eta=0,
	\end{equation}
where $\mathcal N_\Phi$ is the Nijenhuis torsion of $\Phi$ \cite{IW2005}. One can actually show that the first condition in (\ref{eq:normal_ac}) implies the other ones \cite[Section 6.1]{B2002} (see also \cite{IW2005}). An almost contact structure $(\Phi,\xi,\eta)$ such that $\mathcal N_\Phi +d\eta\otimes \xi = 0$ is called  
\emph{normal} \cite{B2002}. So normal almost contact structures provide examples of complex structures on the Atiyah algebroid (of the trivial line bundle), and, in turn, of generalized contact structures of complex type. It turns out that, locally, every generalized contact structure of complex type is of this form (see Lemma \ref{lem:ac} below).
\end{example}

Example \ref{exam:nacs} is special in view of the following

\begin{lemma}\label{lem:ac}
Let $L\to M$ be a line bundle and let $\varphi : DL \to DL$ be an integrable complex structure. Then, around every point of $M$,
there is a trivialization $L \cong \mathbbm R_M$ identifying $\varphi$ with a complex structure of the form (\ref{Ex:AlmNorCon}) for some normal almost contact 
structure $(\Phi,\xi,\eta)$.
\end{lemma}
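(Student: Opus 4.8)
The plan is to reduce the whole statement to the existence, near any point, of a nowhere vanishing local section $\mu$ of $L$ that is annihilated by the derivation $\varphi\,\mathbb 1$. Once such a $\mu$ is available, trivializing $L$ by means of $\mu$ puts $\varphi$ into the shape \eqref{Ex:AlmNorCon} essentially by inspection, and the two remaining claims --- that $(\Phi,\xi,\eta)$ is an almost contact structure, and that it is normal --- are exactly the content of Example \ref{exam:nacs}.

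First I would record the key structural fact: the derivation $D:=\varphi\,\mathbb 1\in\Gamma(DL)$ has nowhere vanishing symbol. Indeed, suppose $\sigma(D)_x=0$ at some $x\in M$. By the short exact sequence \eqref{ses:Spencer} we then have $D_x=\lambda\,\mathbb 1_x$ for some $\lambda\in\R$, and applying $\varphi_x$ and using $\varphi^2=-\mathrm{id}$ gives $-\mathbb 1_x=\varphi_x(D_x)=\lambda D_x=\lambda^2\,\mathbb 1_x$, which is absurd. Hence $Y:=\sigma(D)\in\mathfrak X(M)$ is nowhere vanishing. Note that only the almost complex condition, not integrability, enters here.

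Next I would fix $x_0\in M$, choose local coordinates $(u,x^1,\dots,x^m)$ near $x_0$ straightening $Y$ to $\partial/\partial u$, and pick any local frame $\mu_0$ of $L$, so that $D\mu_0=g\,\mu_0$ for a smooth function $g$. Solving the linear ordinary differential equation $\partial h/\partial u=-gh$ along the $u$-lines with a nowhere vanishing initial datum produces a nowhere vanishing function $h$ with $D(h\mu_0)=(Y(h)+hg)\,\mu_0=(\partial h/\partial u+hg)\,\mu_0=0$. Setting $\mu:=h\mu_0$ and using it to identify $L\cong\R_M$, we get $D\R_M=TM\oplus\R_M$ via the induced flat connection (Remark \ref{rem:trivial}), and $\varphi\,\mathbb 1=\varphi(0,1)=(-\xi,0)$ with $\xi:=-Y$ a genuine vector field. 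Writing a general endomorphism of $D\R_M=TM\oplus\R_M$ in block form, $\varphi(X,r)=(\Phi X-r\xi',\eta(X)+ar)$ for a $(1,1)$-tensor $\Phi$, a vector field $\xi'$, a $1$-form $\eta$ and a function $a$, the identity $\varphi(0,1)=(-\xi,0)$ forces $\xi'=\xi$ and $a=0$; hence $\varphi$ has precisely the form \eqref{Ex:AlmNorCon}.

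It then remains to invoke Example \ref{exam:nacs}: since $\varphi$ is an integrable complex structure on $DL\cong D\R_M$ of the form \eqref{Ex:AlmNorCon}, the computation recalled there shows that $\varphi^2=-\mathrm{id}$ is equivalent to $(\Phi,\xi,\eta)$ being an almost contact structure, while integrability of $\varphi$ is equivalent to \eqref{eq:normal_ac}, whose first condition $\mathcal N_\Phi+d\eta\otimes\xi=0$ is exactly normality. The only non-formal ingredients are the symbol-nonvanishing observation --- a one-line consequence of $\varphi^2=-\mathrm{id}$ --- and the construction of $\mu$, which is a routine first-order linear ODE after straightening $Y$; so I do not anticipate a serious obstacle. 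If one prefers to avoid coordinates, one can instead appeal to the fact, used in the proof of Proposition \ref{prop:loc_hgc_type}, that a nowhere vanishing derivation is locally of the form $f(\mathbb 1-Z)$, but the direct argument above is self-contained.
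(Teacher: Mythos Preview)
Your proof is correct and follows essentially the same route as the paper's: both arguments observe that $\sigma(\varphi\,\mathbb 1)$ is nowhere vanishing (from $\varphi^2=-\mathrm{id}$) and then solve a first-order linear ODE along its flow to produce the good trivialization. The only cosmetic difference is packaging: you phrase the goal intrinsically as finding a local frame $\mu$ with $(\varphi\,\mathbb 1)\mu=0$ and then trivialize, whereas the paper first picks an arbitrary trivialization, writes $\varphi(X,r)=(\Phi(X)-r\xi,\eta(X)+gr)$, and then computes that a gauge change $(x,r)\mapsto e^{-f(x)}r$ sends $g\mapsto g-\xi(f)$, reducing to the equation $\xi(f)=g$.
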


\begin{proof}
Without loss of generality, we can assume $L = \mathbbm R_M$, so that $DL \cong TM \oplus \mathbb R_M$. 
It is clear that, under this identification, $\varphi$ is necessarily of the form
	\begin{equation}\label{eq:4tuple}
	\varphi(X,r)=(\Phi(X)-r\xi,\eta(X)+ gr)
	\end{equation}
for some quadruple $(\Phi, \xi, \eta, g)$, where $\Phi$ is a $(1,1)$-tensor, $\xi$ is a vector field, $\eta$ is a $1$-form, and $g$ is a smooth function on $M$. Locally, we can achieve $g = 0$ as follows. First of all, let $f \in C^\infty (M)$. A straightforward computation shows that, under the line bundle automorphism $\mathbbm R_M \to \mathbbm R_M$, $(x, r) \mapsto e^{-f(x)} r$, the quadruple $(\Phi, \xi, \eta, g)$ changes into
\[
(\Phi +df\otimes \xi,\  \xi ,\ \eta + df\circ \Phi+(\xi(f) -g)df,\  g-\xi(f))
\]
Now, from $\varphi^2 = - \mathrm{id}$, we easily find that $\xi$ is everywhere non-zero. Hence, locally, around every point, there exists a function $f$ such that $\xi (f) = g$. This concludes the proof.
\end{proof}

\begin{remark}
Not all integrable complex structures on $DL$ are globally of the form (\ref{Ex:AlmNorCon}), in general, not even when $L = \mathbbm R_M$ is the trivial line bundle. To see this, let $M$ be a manifold such that $\mathrm{H}_{\mathrm{dR}}^1(M)
\neq 0$, and let $(\Phi',\xi',\eta')$ be a normal almost contact structure on $M$ (such manifolds exist, and the $1$-dimensional sphere provides the simplest possible example). Now, pick a closed, but not exact, $1$-form $\alpha$ on $M$, and put
	\begin{equation}
	\Phi = \Phi' +\alpha\otimes \xi', \quad \xi = \xi' ,\quad \eta =  \eta'+\alpha\circ\Phi' +\alpha(\xi')\alpha, \quad g = -\alpha(\xi').
	\end{equation}
Then the endomorphism $\varphi : D \mathbbm R_M \to D \mathbbm R_M$ given by (\ref{eq:4tuple}) is an integrable complex structure that cannot be put in the form (\ref{Ex:AlmNorCon}) by a global line bundle automorphism $\mathbbm R_M \to \mathbbm R_M$.
\end{remark}

\subsection{Local normal form}\label{subsec:nn}

\begin{theorem}\label{theor:ANN}
Let $L \to M$ be a line bundle equipped with a complex structure $\varphi : DL \to DL$ on the gauge algebroid. Then, locally, around every point of $M$, there are
\begin{itemize}
\item[$\triangleright$] coordinates $(u, x^1, \ldots, x^n, y^1, \ldots, y^n)$ on M, and
\item[$\triangleright$] a flat connection $\nabla$ in $L$, such that
\end{itemize}
\begin{equation}\label{eq:varphi_can}
\varphi \mathbb 1 = \nabla_{\partial / \partial u}, \quad \text{and} \quad \varphi \nabla_{\partial / \partial x^i} = \nabla_{\partial / \partial y^i}.
\end{equation}
In other words, locally, around every point of $M$, there is trivialization $L \cong \mathbbm R_{ \mathbbm R \times  \mathbbm C^n}$ identifying $\varphi$ with $\varphi_{\mathit{can}}$ from Example \ref{ex:varphi_can}.
\end{theorem}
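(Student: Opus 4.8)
The plan is to lift $\varphi$ to the frame bundle of $L$ and reduce the statement to the classical Newlander--Nirenberg theorem. Let $\pi : \widetilde M := L^\ast \smallsetminus 0 \to M$ be the frame bundle, a principal $\mathbbm R^\times$-bundle, and recall that $DL$ is canonically the Atiyah algebroid of $\widetilde M$: there is a canonical vector bundle isomorphism $\pi^\ast DL \cong T\widetilde M$ inducing an isomorphism of Lie algebras between $\Gamma(DL)$ and the $\mathbbm R^\times$-invariant vector fields $\mathfrak X(\widetilde M)^{\mathbbm R^\times}$ (Lie algebroid bracket $\leftrightarrow$ commutator of vector fields), and sending the identity derivation $\mathbb 1$ to the Euler vector field $\mathcal E$ generating the principal action. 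Under this identification, $\varphi$ corresponds to an $\mathbbm R^\times$-invariant field $\widetilde J$ of endomorphisms of $T\widetilde M$ with $\widetilde{J}^{2} = - \mathrm{id}$, and, since the Lie algebroid Nijenhuis torsion $\mathcal N_\varphi$ is computed with the same bracket and invariant vector fields span $T\widetilde M$ pointwise, integrability of $\varphi$ is equivalent to vanishing of the ordinary Nijenhuis torsion of $\widetilde J$. Hence, by Newlander--Nirenberg, $(\widetilde M, \widetilde J)$ is a complex manifold of complex dimension $n+1$.

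Next I would exploit that the $\mathbbm R^\times$-action on $\widetilde M$ is by biholomorphisms (because $\widetilde J$ is invariant). It follows that $\mathcal E$ is a real holomorphic vector field, equivalently that its $(1,0)$-part $W := \mathcal E^{1,0}$ is a holomorphic vector field, with $\mathcal E = W + \overline W$; and $\mathcal E$, hence $W$, is nowhere vanishing since the action is free. Fixing $\tilde x_0 \in \pi^{-1}(x_0)$, the holomorphic flow-box theorem provides holomorphic coordinates $(w^0, w^1, \ldots, w^n)$ centered at $\tilde x_0$ with $W = \partial/\partial w^0$. Writing $w^0 = x^0 + \mathrm{i} u$ and $w^j = x^j + \mathrm{i} y^j$ ($j = 1, \ldots, n$), a short computation yields $\mathcal E = \partial/\partial x^0$, $\widetilde J\,\partial/\partial x^0 = \partial/\partial u$, and $\widetilde J\,\partial/\partial x^j = \partial/\partial y^j$.

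Finally I would descend to $M$. The identity $\mathcal E = \partial/\partial x^0$ means that $x^0$ is an affine coordinate along the orbits, so $(u, x^1, y^1, \ldots, x^n, y^n)$ descend to coordinates on $M$ near $x_0$, the slice $\{x^0 = 0\}$ is a local section of $\pi$, and the flat connection $\nabla$ in $L$ it determines has the property that the $\nabla$-horizontal lift $\nabla_X \in \Gamma(DL)$ of a vector field $X$ on $M$ corresponds to the invariant vector field on $\widetilde M$ which projects to $X$ and is independent of $x^0$. Since $\partial/\partial u$, $\partial/\partial x^j$, $\partial/\partial y^j$ are all $x^0$-independent (hence invariant), the three identities of the previous paragraph become precisely $\varphi\mathbb 1 = \nabla_{\partial/\partial u}$ and $\varphi\nabla_{\partial/\partial x^j} = \nabla_{\partial/\partial y^j}$, i.e.~\eqref{eq:varphi_can}; in other words the trivialization $L \cong \mathbbm R_{\mathbbm R \times \mathbbm C^n}$ just constructed identifies $\varphi$ with $\varphi_{\mathit{can}}$ of Example~\ref{ex:varphi_can}.

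The only non-formal ingredient is the Newlander--Nirenberg theorem; all the rest is bookkeeping with the frame bundle, the conceptual content being that a complex structure on the gauge algebroid $DL$ \emph{is} an $\mathbbm R^\times$-invariant honest complex structure on $\widetilde M$. I expect the main point requiring care to be the last step: one must check that the straightened holomorphic coordinates descend \emph{together with a flat connection} --- that is, that $x^0$ trivializes the $\mathbbm R^\times$-action affinely (so that $\mathcal E = \partial/\partial x^0$ on the nose) and that $\widetilde J\mathcal E$ and the $\widetilde J\,\partial/\partial x^j$ are invariant and descend to the $\nabla$-horizontal lifts appearing in \eqref{eq:varphi_can} --- and one should restrict to a connected component of $\widetilde M$ (or pass to $L^\ast_{>0}$) to make the action free and proper with local quotient $M$.
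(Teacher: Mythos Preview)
Your proposal is correct and follows essentially the same route as the paper: lift $\varphi$ to an invariant complex structure on the frame bundle $\widetilde M = L^\ast \smallsetminus 0$, apply Newlander--Nirenberg, straighten the (holomorphic) Euler vector field via a holomorphic flow box, and observe that the remaining coordinate vector fields commute with $\mathcal E$ and hence descend to $M$ together with a flat connection. The paper phrases the straightening step as ``$\mathcal E$ can be locally completed to a holonomic complex frame,'' which is exactly your holomorphic flow-box argument; your caveat about restricting to a connected component of $\widetilde M$ is harmless since the statement is local.
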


The proof will essentially follow from the Newlander-Nirenberg theorem after applying the \emph{homogenization trick} \cite{VW2017} which we now recall. First of all, consider the frame bundle $\widetilde M =  L^\ast \smallsetminus 0 \to M$ of $L$: it is a principal $\mathbbm R^\times$-bundle. We denote by $h : \mathbbm R^\times \times \widetilde M \to \widetilde M$, $(s, \varepsilon) \mapsto h_s (\varepsilon)$ the group action, and by $\mathcal E = \frac{d}{dt}|_{t = 0} h_{\mathrm{exp} (t)}$ the restriction to $\widetilde M$ of the Euler vector field. A section $\lambda$ of $L$ corresponds to a linear function on $L^\ast$, and, by restriction, to a \emph{homogeneous function} $\widetilde \lambda$ on $\widetilde M$, where, by ``homogeneous'', we mean that $h^\ast_s (\widetilde \lambda) = s \widetilde \lambda$, for all $s \in \mathbbm R^\times$. In particular, $\mathcal E (\widetilde \lambda) = \widetilde \lambda$. Every homogeneous function on $\widetilde M$ arises in this way. Secondly, let $\Delta$ be a derivation of $L$. Then there exists a unique vector field $\widetilde \Delta$ on $\widetilde M$ such that
\[
\widetilde \Delta (\widetilde \lambda) = \widetilde{\Delta \lambda}, \quad \text{for all $\lambda \in \Gamma (L)$}.
\]
Vector field $\widetilde \Delta$ is homogeneous in the sense that $h_s^\ast (\widetilde \Delta) = \widetilde \Delta$, for all $s \in \mathbbm R^\times$. In particular, $\widetilde \Delta$ commutes with $\mathcal E$ and it is projectable onto $M$ with projection $\sigma (\Delta)$. Every homogeneous vector field on $\widetilde M$ arises in this way. Notice that $\widetilde{\mathbb 1} = \mathcal E$. Thirdly, let $\varphi : DL \to DL$ be a vector bundle endomorphism. Then there exists a unique $(1,1)$-tensor $\widetilde \varphi : T \widetilde M \to T \widetilde M$ such that 
\begin{equation}\label{eq:varphi_tilde}
\widetilde \varphi \widetilde \Delta = \widetilde{\varphi \Delta}, \quad \text{for all $\Delta \in \Gamma (DL)$}.
\end{equation}
The $(1,1)$-tensor $\widetilde \varphi$ is homogeneous in the sense that $h^\ast_s (\widetilde \varphi) = \widetilde \varphi$, for all $s \in \mathbbm R^\times$. In particular, the Lie derivative $\mathcal L_{\mathcal E} \widetilde \varphi$ vanishes. Every homogeneous $(1,1)$-tensor on $\widetilde M$ arises in this way. Additionally, $\varphi$ is an integrable complex structure if and only if $\widetilde \varphi$ is a complex structure on $\widetilde M$.

\begin{example}
An immediate consequence of the homogenization construction described above is that all odd dimensional real projective spaces possess a(n 
integrable) complex structure on the gauge algebroid of the dual of their tautological bundle. Indeed, let $k$ be a positive integer, and 
let $L$ be the dual of the tautological bundle on the projective space $\mathbbm{RP}^{2k-1}$. The total space of the frame bundle of $L$ 
identifies canonically with $\mathbbm R^{2k} \smallsetminus \{0\}$, and the action $h$ of $\mathbbm R^\times$ consists of homotheties. The 
standard complex structure on $\mathbbm R^{2k} = \mathbbm C^k$ is homogeneous, hence $DL$ is equipped with an integrable 
complex structure. Viewing the sphere as a double cover of the projective space, we also conclude that there is a canonical integrable complex structure on the Atiyah algebroid of the trivial line bundle over any odd dimensional sphere, and one can show that this complex structure does actually correspond to the normal almost contact structure underlying the well-known canonical Sasaki structure.
\end{example}

\begin{proof}[Proof of Theorem \ref{theor:ANN}.]

Let $\varphi : DL \to DL$ be an integrable complex structure. Consider $\widetilde \varphi$. It is a complex structure on $\widetilde M$. As $\mathcal E$ is nowhere vanishing, it can be locally completed to a holonomic complex frame, i.e.~locally, around every point of $\widetilde M$, there are coordinates $(T, U, X^1, \ldots, X^n, Y^1, \ldots, Y^n)$ such that
\[
\mathcal E = \frac{\partial}{\partial T}, \quad \widetilde \varphi \mathcal E = \frac{\partial}{\partial U}, \quad \text{and} \quad \widetilde \varphi \frac{\partial}{\partial X^i} = \frac{\partial}{\partial Y^i}.
\]
As all coordinate vector fields commute with $\mathcal E$, they all come from (commuting) derivations of $L$. In particular
\begin{itemize}
\item[$\triangleright$] $(U, X^1, \ldots, X^n, Y^1, \ldots, Y^n)$, are pull-backs via the projection $\widetilde M \to M$ of uniquely defined coordinates $(u, x^1, \ldots, x^n, y^1, \ldots, y^n)$ on $M$, and
\item[$\triangleright$] there exists a unique flat connection $\nabla$ in $L$ such that 
\[
\frac{\partial}{\partial U} = \nabla_{\partial / \partial u}, \ \ldots\ , \ \frac{\partial}{\partial X^i} = \nabla_{\partial / \partial x^i}, \ \ldots\ ,\  \frac{\partial}{\partial Y^i} = \nabla_{\partial / \partial y^i},\ \ldots
\]
\end{itemize}
From (\ref{eq:varphi_tilde}), the coordinates $(u, x^1, \ldots, x^n, y^1, \ldots, y^n)$ on $M$ and the flat connection $\nabla$ possess all the required properties.
\end{proof}

As an immediate corollary of Theorem \ref{theor:ANN} and Lemma \ref{lem:ac} we get a local normal form for normal almost contact structures.

\begin{corollary}
Let $(\Phi,\xi,\eta)$ be a normal almost contact structure on a manifold $M$. Then, around every point, there exist local coordinates 
$(u,x^i,y^i)$ and a local function $f$, such that: 
	\begin{enumerate}
	\item $\xi=\frac{\partial }{\partial u}$,
	\item $\eta=du+\frac{\partial f}{\partial y^i}dx^i- \frac{\partial f}{\partial x^i}dy^i$,
	\item $\Phi= dx^i\otimes \frac{\partial}{\partial y^i}- dy^i\otimes \frac{\partial}{\partial x^i} + df \otimes \frac{\partial }{\partial u}$.
	\end{enumerate}
\end{corollary}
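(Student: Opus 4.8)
The plan is to deduce the corollary by combining the local normal form for complex structures on the gauge algebroid (Theorem \ref{theor:ANN}) with the dictionary of Lemma \ref{lem:ac}, and then unwinding both identifications explicitly in coordinates. Concretely, given a normal almost contact structure $(\Phi, \xi, \eta)$ on $M$, I first invoke Example \ref{exam:nacs}: the associated endomorphism $\varphi : D\mathbbm R_M \to D\mathbbm R_M$, $\varphi(X, r) = (\Phi(X) - r\xi, \eta(X))$, is an integrable complex structure on the gauge algebroid of the trivial line bundle $\mathbbm R_M$. By Theorem \ref{theor:ANN}, around any point there are coordinates $(u, x^1, \ldots, x^n, y^1, \ldots, y^n)$ and a flat connection $\nabla$ in $\mathbbm R_M$ with $\varphi \mathbb 1 = \nabla_{\partial/\partial u}$ and $\varphi \nabla_{\partial/\partial x^i} = \nabla_{\partial/\partial y^i}$.

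The key observation is that a flat connection in the \emph{trivial} line bundle $\mathbbm R_M$ is of the form $\nabla = d - df$ for a local function $f$ (since flat connections differ from the standard one $d$ by a closed, hence locally exact, $1$-form); equivalently, $\nabla_X = X - X(f)$ in the splitting $D\mathbbm R_M = TM \oplus \mathbbm R_M$. I would substitute this into the normal-form identities: $\varphi \mathbb 1 = \nabla_{\partial/\partial u}$ reads $(-\xi, 1) = (\partial/\partial u, -\partial f/\partial u)$ in $TM \oplus \mathbbm R_M$; wait — I must be careful about the sign convention in \eqref{Ex:AlmNorCon}, so I would instead read off $\varphi \mathbb 1 = \varphi(0,1) = (-\xi, 0)$, matching it against $\nabla_{\partial/\partial u} = (\partial/\partial u, -\partial f/\partial u)$, forcing $\partial f/\partial u = 0$ and $\xi = -\partial/\partial u$; a harmless sign adjustment in the choice of $u$ (replace $u$ by $-u$) gives (1), $\xi = \partial/\partial u$. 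Similarly $\varphi \nabla_{\partial/\partial x^i} = \nabla_{\partial/\partial y^i}$ becomes, after expanding via \eqref{Ex:AlmNorCon}, a pair of equations: the $TM$-component gives $\Phi(\partial/\partial x^i) - (\partial f/\partial x^i)\xi = \partial/\partial y^i$, and since $\partial f/\partial x^i$ times $\xi = \partial/\partial u$ contributes the $\frac{\partial f}{\partial x^i}\frac{\partial}{\partial u}$ term, this — together with the analogous equation for $\partial/\partial y^i$ obtained by applying $\varphi$ once more and using $\varphi^2 = -\mathrm{id}$ — yields formula (3) for $\Phi$; the $\mathbbm R_M$-component gives $\eta(\partial/\partial x^i) = -\partial f/\partial y^i$ etc., which together with $\eta(\xi) = 1$ (i.e. $\eta(\partial/\partial u) = 1$) and $\eta \circ \Phi = 0$ pins down $\eta = du + \frac{\partial f}{\partial y^i}dx^i - \frac{\partial f}{\partial x^i}dy^i$, which is (2). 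I would present this bookkeeping compactly rather than line by line.

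I do not anticipate a serious obstacle: the corollary is genuinely a translation exercise once Theorem \ref{theor:ANN} is in hand. The only point requiring a little care is the interplay of sign/orientation conventions between \eqref{Ex:AlmNorCon}, the definition of $\varphi_{\mathit{can}}$ in Example \ref{ex:varphi_can}, and the connection-versus-function dictionary $\nabla = d - df$; one should fix these at the outset so that the three displayed formulas come out exactly as stated (possibly after replacing $u \mapsto -u$ or $f \mapsto -f$). It is also worth remarking explicitly that $f$ here is precisely the function whose existence is asserted in the proof of Lemma \ref{lem:ac} (the one used there to kill the component $g$), so the corollary is fully consistent with that lemma and no independent input is needed. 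A one-line sanity check — verifying directly that the triple $(\Phi, \xi, \eta)$ in (1)–(3) satisfies $\Phi^2 = -\mathrm{id} + \eta \otimes \xi$ and the normality condition $\mathcal N_\Phi + d\eta \otimes \xi = 0$ — can be included to reassure the reader, but it follows automatically since $\varphi_{\mathit{can}}$ is an integrable complex structure.
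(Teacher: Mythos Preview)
Your proposal is correct and follows exactly the route the paper indicates: the paper gives no proof beyond the sentence ``an immediate corollary of Theorem \ref{theor:ANN} and Lemma \ref{lem:ac}'', and what you outline is precisely the unpacking of that sentence, including the correct observation that the flat connection of Theorem \ref{theor:ANN} is locally $\nabla = d - df$ and that the sign fixes $u \mapsto -u$, $f \mapsto -f$ are harmless. One small inaccuracy: the function $f$ here is \emph{not} the same as the one in the proof of Lemma \ref{lem:ac} (there $f$ is chosen to kill the $g$-component, whereas here $g=0$ from the outset); rather, $f$ encodes the line-bundle automorphism $e^{-f}$ that passes from the given trivialization of $\mathbbm R_M$ to the one produced by Theorem \ref{theor:ANN}, which is the same \emph{mechanism} as in Lemma \ref{lem:ac} but a different function serving a different purpose.
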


\subsection{Dolbeault-Atiyah cohomology}
Let $L \to M$ be a line bundle, and let $\varphi : DL \to DL$ be an integrable complex structure on the gauge algebroid of $L$. Similarly as in the case of a complex manifold, there is a cohomology theory attached to $\varphi$. Namely, consider the complexification $DL \otimes \mathbbm C$ of the gauge algebroid and denote by $D^{(1,0)} L$ and $D^{(0,1)} L$ the $+\mathrm{i}$ and the $-\mathrm{i}$-eigenbundles of $\varphi$ respectively, so that 
\[
DL \otimes \mathbbm C = D^{(1,0)} L \oplus D^{(0,1)} L,
\]
and complex Atiyah forms $\Omega_L^\bullet \otimes \mathbbm C$ splits as
\[
\Omega_L^\bullet \otimes \mathbbm C = \bigoplus_{r,s} \Omega^{(r,s)}_L,
\]
where we denoted by $\Omega^{(r,s)}_L$ the sections of the (complex) vector bundle
\[
\wedge^r (D^{(1,0)}L)^\ast \otimes \wedge^s  (D^{(0,1)} L)^\ast \otimes L.
\]
The de Rham differential $d_D$ splits, in the obvious way, as $d_D = \partial_D + \overline \partial_D$, where 
\[
\partial_D : \Omega^{(\bullet, \bullet)}_L \to \Omega^{(\bullet + 1, \bullet)}_L, \quad \text{and} \quad \overline \partial_D : \Omega^{(\bullet, \bullet)}_L \to \Omega^{(\bullet, \bullet + 1)}_L,
\]
and the integrability of $\varphi$ is equivalent to
\[
\partial_D^2 = \overline \partial{}^2_D = \partial_D \overline \partial_D + \overline \partial_D \partial_D = 0.
\]
We call the cohomology of $\overline{\partial}_D$ the \emph{Dolbeault-Atiyah cohomology}.

\begin{theorem}\label{theor:DAC}
The Dolbeault-Atiyah cohomology vanishes locally.
\end{theorem}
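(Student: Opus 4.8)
The plan is to deduce Theorem \ref{theor:DAC} from the ordinary local $\overline\partial$-Poincar\'e lemma on $\mathbbm C^{n+1}$ via the homogenization trick already set up in the Appendix. Concretely, given a line bundle $L \to M$ with an integrable complex structure $\varphi : DL \to DL$, I would pass to the frame bundle $\widetilde M = L^\ast \smallsetminus 0$, equipped with the Euler vector field $\mathcal E$ and the homogeneous complex structure $\widetilde\varphi$. The key dictionary is: complex Atiyah forms $\Omega^\bullet_L \otimes \mathbbm C$ correspond exactly to \emph{homogeneous} complex differential forms on $\widetilde M$, i.e. forms $\widetilde\alpha$ with $\mathcal L_{\mathcal E}\widetilde\alpha = \widetilde\alpha$ (degree $1$ for the $L$-twisting), in a way compatible with wedge products, contractions with homogeneous vector fields, and the differential $d_D \leftrightarrow d$. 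Moreover, since $\widetilde\varphi$ is a genuine complex structure, the decomposition $d = \partial + \overline\partial$ on $\widetilde M$ restricts to homogeneous forms and matches $d_D = \partial_D + \overline\partial_D$ under the correspondence. So a $\overline\partial_D$-closed Atiyah form of type $(r,s)$, $s \geq 1$, becomes a homogeneous $\overline\partial$-closed $(r,s)$-form on $\widetilde M$.

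The next step is the crucial one: producing a \emph{homogeneous} local primitive. Working near a point of $\widetilde M$, choose holomorphic coordinates adapted as in the proof of Theorem \ref{theor:ANN}, so that $\mathcal E = \partial/\partial T$ and the remaining coordinates are pulled back from $M$ (with $T$ genuinely a fiber coordinate). A homogeneous form of weight $1$ can then be written, using $\mathcal L_{\mathcal E}\widetilde\alpha = \widetilde\alpha$, as $e^{T}$ times a form with coefficients and coframe independent of $T$ — more precisely the homogeneity equation is a linear ODE in $T$ whose solution displays the explicit $T$-dependence. The ordinary Dolbeault--Poincar\'e lemma gives \emph{some} local primitive $\widetilde\beta$ of $\widetilde\alpha$; the issue is that $\widetilde\beta$ need not be homogeneous. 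I would fix this by averaging: apply the projector onto the weight-$1$ part, $\widetilde\beta \mapsto \widetilde\beta^{\mathrm{hom}} := $ (the weight-$1$ Fourier-type component with respect to the $\mathbbm R^\times$-action $h$), or, equivalently in additive terms, use that $\mathcal L_{\mathcal E}$ acts semisimply with the weight-$1$ eigenprojection being a bona fide operator on germs, commuting with $\overline\partial$ because $\mathcal L_{\mathcal E}$ commutes with $\overline\partial$ (as $\mathcal E$ is a real holomorphic vector field and $\mathcal L_{\mathcal E}\widetilde\varphi = 0$). Since $\widetilde\alpha$ is already of weight $1$, we get $\overline\partial \widetilde\beta^{\mathrm{hom}} = (\overline\partial\widetilde\beta)^{\mathrm{hom}} = \widetilde\alpha^{\mathrm{hom}} = \widetilde\alpha$, and $\widetilde\beta^{\mathrm{hom}}$ is homogeneous of the correct weight, hence corresponds to a local Atiyah form $\beta \in \Omega^{(r,s-1)}_L$ with $\overline\partial_D \beta = \alpha$. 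One has to be slightly careful that the weight-$1$ projection of a germ is again a germ on a (possibly smaller, $h$-saturated) neighborhood, but this is routine since the forms in play are smooth in $T$ and the projection is an integral over a compact piece of the $\mathbbm R^\times$-orbit combined with the semisimplicity of the action.

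The main obstacle I anticipate is precisely the homogenization bookkeeping: one must verify that the bijection between complex Atiyah forms and homogeneous forms on $\widetilde M$ intertwines \emph{all} the relevant structure — the bigrading induced by $\varphi$ versus $\widetilde\varphi$, the operators $\overline\partial_D$ versus $\overline\partial$, and the weight versus $L$-degree — and that the $\mathbbm R^\times$-averaging operator preserves the space of germs of forms without destroying $\overline\partial$-exactness. All of this is conceptually clean because $\widetilde\varphi$ is literally a complex structure, $\mathcal E$ is real holomorphic, and the action $h$ is holomorphic; but it requires care to state precisely. Once this is in place, the theorem is immediate: the only genuinely analytic input is the classical local exactness of $\overline\partial$, exactly as the statement in the excerpt anticipates (``consequences of their standard even dimensional counterparts''). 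I would also remark that the $\partial_D$-Poincar\'e lemma, which is invoked in the proof of Theorem D via Remark \ref{rem:Dolbeault}, follows by the identical argument applied to $\partial$ (or by complex conjugation).
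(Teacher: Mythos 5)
Your overall strategy (homogenize to the frame bundle $\widetilde M = L^\ast\smallsetminus 0$, identify complex Atiyah forms with weight-one forms, and invoke the classical Dolbeault lemma) is a genuinely different route from the paper, which instead reduces to the normal form of Theorem \ref{theor:ANN} on $\mathbbm R\times\mathbbm C^n$, writes $\omega=\omega_0+\omega_1\wedge\mathfrak k$ with $\mathfrak k=\mathfrak j+\mathrm{i}\,du$, computes $\overline\partial_D$ explicitly, and applies the $\overline\partial$-Poincar\'e lemma \emph{with a smooth real parameter} $u$ twice. The dictionary part of your argument is fine: Atiyah forms do correspond to weight-one forms on $\widetilde M$, the bigradings match because $\widetilde\varphi$ is a homogeneous complex structure, and $\mathcal L_{\mathcal E}$ commutes with $\overline\partial$ since $\mathcal E$ is real holomorphic.

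The genuine gap is the step that turns an arbitrary local $\overline\partial$-primitive $\widetilde\beta$ into a homogeneous one. There is no ``weight-one eigenprojection'' acting on germs of smooth forms: the group $\mathbbm R^\times$ is not compact, so averaging over (a compact piece of) its orbits does not produce equivariant objects, and in the adapted coordinates of Theorem \ref{theor:ANN} the operator $\mathcal L_{\mathcal E}=\partial/\partial T$ is not semisimple on smooth germs --- weight one means the coefficients are proportional to $e^{T}$, and a general smooth function of $T$ on an interval (or even on all of $\mathbbm R$) has no canonical ``$e^{T}$-component''; the would-be decomposition is a direct integral over non-unitary characters, not a direct sum admitting a projection. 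So $\widetilde\beta^{\mathrm{hom}}$ is simply not defined, and the identity $\overline\partial\widetilde\beta^{\mathrm{hom}}=\widetilde\alpha$ has no meaning. Natural substitutes also fail: the flow-averaging formula $\int e^{-t}\Phi_t^\ast\widetilde\beta\,dt$ diverges precisely on weight-one data, and an $\mathcal E$-equivariant Cauchy/Dolbeault homotopy operator would have to be built on an unbounded strip against $e^{T}$-growth, where the usual integral operators do not converge. If instead you enforce homogeneity from the start --- e.g.\ untwist by the holomorphic function $e^{-w}$, where $w=T+\mathrm{i}U$ and $\mathcal E=\partial/\partial T$, so that homogeneous data become $T$-independent --- the equation $\overline\partial\widetilde\beta=\widetilde\alpha$ restricted to such data is no longer the plain $\overline\partial$-equation on $\mathbbm C^{n+1}$: it becomes a $\overline\partial$-problem in the $\mathbbm C^n$-directions with a zeroth-order term and smooth dependence on a real parameter, i.e.\ exactly the system the paper solves, and the analytic input needed is the parametrized Dolbeault lemma rather than the unparametrized one. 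So the homogenization framing is attractive, but as written the proof does not go through, and its honest repair collapses onto the paper's direct computation. (Your closing remark that the $\partial_D$-lemma of Remark \ref{rem:Dolbeault} follows by conjugation is fine.)
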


\begin{proof}
In view of Theorem \ref{theor:ANN}, it is enough to work in the case when $M = \mathbbm R \times \mathbbm C^n$. Let $u$ be the standard (real) coordinate on the first factor and let $z^i = x^i + \mathrm{i} y^i$, $i = 1, \ldots, n$, be the standard complex coordinates on the second factor. We can also assume that $L = \mathbbm R_M$ is the trivial line bundle and (\ref{eq:varphi_can}) holds with $\nabla$ being the canonical flat connection on $\mathbbm R_M$. In this case $D^{(1,0)} L$ is spanned by the complex derivations
\begin{equation}\label{eq:D^(1,0)_gen}
\square := \frac{1}{2} \left(\mathbb 1 - \mathrm{i} \nabla_{\partial / \partial u} \right), \quad \text{and} \quad \nabla_i =  \frac{1}{2} \left(\nabla_{\partial/\partial x^i} - \mathrm{i} \nabla_{\partial / \partial y^i} \right), \quad i = 1, \ldots, n.
\end{equation}
It is easy to see that every complex Atiyah form $\omega$ on $\mathbbm R_M$ can be uniquely written as
\[
\omega = \omega_0 + \omega_1 \wedge \mathfrak k
\]
where $\omega_0, \omega_1$ are standard complex forms on $M$ and 
\[
\mathfrak k = \mathfrak j + \mathrm{i} \cdot du.
\]
A long but straightforward computation exploiting
(\ref{eq:D^(1,0)_gen}), shows that 
\[
\overline \partial_D \omega = \overline \partial \omega_0 + \left(\overline \partial \omega_1+(-)^{|\omega_0|} \left(\omega_0 + \mathcal L_Y \omega_0 \right)\right) \wedge \mathfrak k
\]
where
\[
Y := \sigma (\overline \square) = \frac{\mathrm{i}}{2} \frac{\partial}{\partial u},
\]
and $\overline \partial$ is the standard Dolbeault differential on $\mathbbm C^n$ (acting on forms on $\mathbbm R \times \mathbbm C^n$ in the obvious way). So $\omega$ is $\overline \partial_D$-closed iff
\[
\overline \partial \omega_0 =  \overline \partial \omega_1+(-)^{|\omega_0|} \left(\omega_0 + \mathcal L_Y \omega_0 \right) = 0.
\]
In this case, use the vanishing of standard Dolbeault cohomology (with a real parameter $u$), to choose a form $\rho_0$ such that $\overline \partial \rho_0 = \omega_0$. As the Lie derivative along $Y$ commutes with $\overline \partial$ we find
\[
\overline \partial \left( \omega_1 - (-)^{|\rho_0|} \left(\rho_0 + \mathcal L_Y \rho_0 \right)\right) = 0,
\]
and we can choose $\rho_1$ such that $\overline \partial \rho_1 = \omega_1 - (-)^{|\rho_0|} \left(\rho_0 + \mathcal L_Y \rho_0 \right)$. It is now easy to see that
\[
\overline \partial_D \left (\rho_0 + \rho_1 \wedge \mathfrak k \right) = \omega.
\]
This concludes the proof.
\end{proof}

\begin{remark}\label{rem:Dolbeault}
It immediately follows from Theorem \ref{theor:DAC} that the cohomology of $\partial_D$ does also vanish locally.
\end{remark}

\end{document}